\documentclass[leqno,11pt]{amsart}
\usepackage{amssymb, amsmath,amsmath,latexsym,amssymb,amsfonts,amsbsy, amsthm,esint}
\usepackage{color}
\usepackage{graphicx}
\usepackage{tikz}
\usetikzlibrary{arrows, calc, decorations.markings,intersections}
\usepackage{caption}
\usepackage{subcaption}
\usepackage{hyperref}
\usepackage[foot]{amsaddr}

\makeatletter
\DeclareFontFamily{U}{tipa}{}
\DeclareFontShape{U}{tipa}{m}{n}{<->tipa10}{}
\newcommand{\arc@char}{{\usefont{U}{tipa}{m}{n}\symbol{62}}}%

\newcommand{\arc}[1]{\mathpalette\arc@arc{#1}}

\newcommand{\arc@arc}[2]{%
  \sbox0{$\m@th#1#2$}%
  \vbox{
    \hbox{\resizebox{\wd0}{\height}{\arc@char}}
    \nointerlineskip
    \box0
  }%
}
\makeatother

\setlength{\oddsidemargin}{0mm} \setlength{\evensidemargin}{0mm}
\setlength{\topmargin}{0mm} \setlength{\textheight}{220mm} \setlength{\textwidth}{165mm}

\let\pa=\partial
\let\al=\alpha

\let\d=\delta

\let\lam=\lambda
\let\r=\rho
\let\s=\sigma
\let\f=\frac

\let\D=\Delta

\let\Om=\Omega

\let\e=\varepsilon
\let\pa=\partial

\let\ri=\rightarrow
\let\na=\nabla


\def\non{\nonumber}

\def\di{\mathrm{div}\,}

\newcommand{\beq}{\begin{equation}}
\newcommand{\eeq}{\end{equation}}
\newcommand{\beqo}{\begin{equation*}}
\newcommand{\eeqo}{\end{equation*}}
\newcommand{\ben}{\begin{eqnarray}}
\newcommand{\een}{\end{eqnarray}}
\newcommand{\beno}{\begin{eqnarray*}}
\newcommand{\eeno}{\end{eqnarray*}}


\newtheorem{thm}{Theorem}[section]

\newtheorem{theorem}{Theorem}[section]
\newtheorem{definition}[theorem]{Definition}
\newtheorem{lemma}[theorem]{Lemma}
\newtheorem{proposition}[theorem]{Proposition}

\theoremstyle{remark}

\newtheorem{rmk}{Remark}[section]

\newcommand{\dist}{\mathrm{dist}}
\newcommand{\BP}{\mathbb{P}}
\newcommand{\BR}{\mathbb{R}}
\newcommand{\BS}{\mathbb{S}^2}

\newcommand{\cn}{\mathcal{N}}
\newcommand{\qs}{Q^\sharp}

\begin{document}
\title[Uniform profile near point defects]{Uniform profile near the point defect\\
 of Landau-de Gennes model}

\author{Zhiyuan Geng$^1$}
\address{$^1$Basque Center for Applied Mathematics, Alameda de Mazarredo 14
48009 Bilbao, Bizkaia, Spain}
\email{zgeng@bcamath.org}

\author{Arghir Zarnescu$^{1,2,3}$}
\address{$^2$IKERBASQUE, Basque Foundation for Science, Plaza Euskadi 5 48009 Bilbao, Bizkaia, Spain}
\address{$^3$``Simion Stoilow" Institute of the Romanian Academy, 21 Calea Grivi\c{t}ei, 010702 Bucharest, Romania}
\email{azarnescu@bcamath.org}

\thanks{This research is supported by the Basque Government through the BERC 2022-2025 program and by the Spanish State Research Agency through BCAM Severo Ochoa excellence accreditation SEV-2017-0718 and through project PID2020-114189RB-I00 funded by Agencia Estatal de Investigaci\'{o}n (PID2020-114189RB-I00 / AEI / 10.13039/501100011033).}

\date{\today}

\begin{abstract}
For the Landau-de Gennes functional on 3D domains,
\beqo
I_\e(Q,\Omega):=\int_{\Om}\left\{\f12|\na Q|^2+\f{1}{\e^2}\left( -\f{a^2}{2}\mathrm{tr}(Q^2)-\f{b^2}{3}\mathrm{tr}(Q^3)+\f{c^2}{4}[\mathrm{tr}(Q^2)]^2 \right) \right\}\,dx,
\eeqo
it is well-known that under suitable boundary conditions, the global minimizer $Q_\e$ converges strongly in $H^1(\Om)$ to a uniaxial minimizer $Q_*=s_+(n_*\otimes n_*-\f13\mathrm{Id})$ up to some subsequence $\e_n\ri\infty$ , where $n_*\in H^1(\Om,\mathbb{S}^2)$ is a minimizing harmonic map. In this paper we further investigate the structure of $Q_\e$ near the core of a point defect $x_0$ which is a singular point of the map $n_*$. The main strategy is to study the blow-up profile of $Q_{\varepsilon_n}(x_n+\varepsilon_n y)$ where $\{x_n\}$ are carefully chosen and converge to $x_0$. We prove that $Q_{\varepsilon_n}(x_n+\varepsilon_n y)$ converges in $C^2_{loc}(\mathbb{R}^n)$ to a tangent map $Q(x)$ which at infinity behaves like a ``hedgehog" solution that coincides with the asymptotic profile of $n_*$ near $x_0$. Moreover, such convergence result implies that the minimizer $Q_{\varepsilon_n}$ can be well approximated by the Oseen-Frank minimizer $n_*$ outside the $O(\varepsilon_n)$ neighborhood of the point defect.
\end{abstract}

\maketitle

\section{Introduction}
Nematic liquid crystals (NLC) are composed of rigid rod-like molecules which exhibit a locally preferred direction. Sharp variations in the alignment direction of NLC are known as defects, which are generally observed, in experiments, to exist as isolated points or disclination lines in experiments. There are several continuum theories used to describe the local orientation of NLC molecules at equilibrium. In these theories, NLC materials are assumed to occupy a region $\Omega\in \BR^d\  (d=2,3)$ and their locally preferred directions are described by functions taking values in some order parameter spaces. The study of variational problems for energy-minimizing configuration of NLC (especially the configuration near defects) within these theories provides many fascinating mathematical problems. Readers are referred to survey articles \cite{Ball,Lin-Liu,z2021} and references therein for more details.

Among these theories, the simplest one is the Oseen-Frank theory \cite{frank}. In the Oseen-Frank theory, the local orientation of NLC is represented by a unit-vector field $n:\Omega\ri \mathbb{S}^2$, which minimizes an elastic energy. In the simplest setting, the free energy reduces to
\beqo
\int_{\Om} \f12 |\na n|^2 \,dx,
\eeqo
which is the energy functional for harmonic maps. The singular set of a  minimizing harmonic map is very well understood. In particular, in three dimensional space, the singular set contains at most finitely many points \cite{su}. Near each singularity $x_0$, the field $n$ behaves like the rotated ``hedgehog" map $\pm R\f{x-x_0}{|x-x_0|}$ with some rotation $R$ \cite{bcl}. We recall that the major limitations of the Oseen-Frank model are that it only accounts for uniaxial nematic states and does not allow for line defects of finite energy (see \cite{hkl}).

In the physically more realistic Landau-de Gennes theory\cite{deGennes}, the order parameter is a $3\times 3$ symmetric traceless matrix $Q$ (the so-called Q-tensors), which can be interpreted as the renormalized second moment of the (formal) probability distribution of the local molecular orientation. The total free energy contains two parts, namely the elastic energy and the bulk potential, whose simplified form reads
\beqo
\begin{split}
I_\e(Q,\Om):=&\int_{\Om} \left\{ f_e(Q,\na Q)+f_b(Q) \right\}\,dx\\
=&\int_{\Omega} \left\{ \f12|\na Q|^2+\f{1}{\e^2}(-\f{a^2}{2}\mathrm{tr}(Q^2)-\f{b^2}{3}\mathrm{tr}(Q^3)+\f{c^2}{4}[\mathrm{tr}(Q^2)]^2+C) \right\}\,dx,
\end{split}
\eeqo
where $\e,a,b,c$ are material dependent constants, $C$ is a constant that ensures $f_b(Q)\geq 0$. The Landau-de Gennes theory can predict richer and more complicated local behaviors of the NLC medium because it accounts for both uniaxial and biaxial phases ($Q$ is called biaxial when it has three distinct eigenvalues, uniaxial when it has only two equal eigenvalues and isotropic when all the three equal eigenvalues are zero)). In particular it allows biaxiality in the cores of point defects and disclination lines. Interested readers can refer to \cite{bpp,insz1,insz2,gm,canevari1,abl,di,insz3,insz4,canevari2,cl2017,hmp,dmp1,dmp2,acs,ty} for various studies on solutions and defect patterns of the Landau-de Gennes model.

When $\e\ri 0$, the Landau-de Gennes energy will enforce the uniaxial constraint $Q=s_+(n\otimes n-\f13\mathrm{Id})$ (so that the potential function takes its minimal value, see \eqref{def of min manifold}) and one can recover the Oseen-Frank model. Such convergence, which is usually referred to as the \emph{vanishing elasticity limit} (see \cite{gartland}), was first analysed in \cite{mz} and refined later on in \cite{nz}. Their results can be briefly summarized as follows: under suitable assumptions on the domain $\Omega$ and the boundary condition $Q|_{\pa\Om}$, the global minimizers $Q_\e$ converges strongly in $H^1$ to a limiting uniaxial minimizer $Q_*=s_+(n_*\otimes n_*-\f13\mathrm{Id})$ up to a subsequence, where $n_*\in H^1(\Omega,\mathbb{S}^2)$ is a minimizing harmonic map. Moreover, the convergence is strong in $C^k_{loc}(\Omega\setminus \mathcal{S}(n_*))$ for any non-negative integer $k$, where $\mathcal{S}(n_*)$ denotes the singular set of $n_*$. Similar limiting problems for Landau-de Gennes model have also been explored in \cite{bpp,gm,gms,canevari1,canevari2,clr} under various settings. The study of vanishing elasticity limits is influenced by similar analyses of the Ginzburg-Landau model for superconductors \cite{bbh93,bbh94}, while the higher dimension of the target space generates greater complexity in analysis for Q-tensors.

The main purpose of this paper is to further investigate the structure of minimizers $Q_\e$ in the core of a point defect $x_0\in\mathcal{S}(n_*)$ by studying the blow-up profile of $Q_{\e_n}(x_n+\e_ny)$ where $x_n$ will be carefully chosen and converge to $x_0$. We summarise our main results in the following theorem:
\begin{theorem}
Suppose $Q_{\e_n}$ is a sequence of global minimizers of $I_{\e_n}(\cdot,\Om)$ subjected to the Dirichlet boundary condition \eqref{bdy-con} and $Q_{\varepsilon_n}$ converges to the vanishing elasticity limit $Q_*$ in the sense of \cite{mz,nz}. Let $x_0\in\mathcal{S}(n_*)$. There exists a subsequence of $Q_{\e_n}$, denoted as itself, and a sequence $x_n\ri x_0$ such that the following holds

\begin{itemize}
 \item (Proposition \ref{oqconverge})$Q_{\e_n}(x_n+\e_nx)\ri Q(x)$ in $C^2_{loc}(\BR^3)$ and $Q(x)$ is a local minimizer of the functional $I(Q)=\int\{ \f12|\na Q|^2+f_b(Q)\}\,dx$.
 \item (Theorem \ref{uniquenessthm}, Theorem \ref{match in out} )$Q(x)\ri s_+(n(x)\otimes n(x)-\f13 \mathrm{Id})$ as $|x|\ri\infty$, where $n(x)=T(\f{x}{|x|})$ with $T\in O(3)$ is determined by the asymptotic profile of $n_*$ near $x_0$.
 \item (Theorem \ref{conv on shrinking domain})Let $B_r(x_0)$ be a small neighborhood of $x_0$ that doesn't contain other singularities of $n_*$. Then for any sequence $R_n\uparrow \infty$ and satisfying $R_n\e_n<r$, there holds
 \beqo
 \lim\limits_{n\ri\infty} \left( \sup_{R_n\e_n\leq |x| \leq r} |Q_{\e_n}(x_n+x)-Q_*(x_0+x)| \right)=0,
 \eeqo
 which implies the uniform convergence of $Q_{\e_n}$ to $Q_*$ outside shrinking domains.
\end{itemize}
\end{theorem}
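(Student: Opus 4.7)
The plan is first to choose $x_n \to x_0$ so that the blow-up is forced to be non-trivial. I would take $x_n$ to realise the supremum of $\mathrm{dist}(Q_{\e_n}(\cdot), \mathcal{N})$ on the closure of a fixed small ball $B_{r_0}(x_0)$, where $\mathcal{N}$ is the uniaxial vacuum manifold. Since $n_*$ is genuinely singular at $x_0$ while $Q_{\e_n}\to Q_*$ in $C^k_{loc}(\Omega\setminus \mathcal{S}(n_*))$ by \cite{mz,nz}, this supremum must stay bounded below by some $\delta_0>0$ (otherwise $Q_{\e_n}$ would be close to the uniaxial manifold everywhere on $B_{r_0}(x_0)$, contradicting the discontinuity of $n_*$ at $x_0$), and the maximiser $x_n$ must approach $x_0$. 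Setting $Q^n(y):=Q_{\e_n}(x_n+\e_n y)$, the rescaled Euler--Lagrange equation is $\Delta Q^n = D_Q f_b(Q^n)$ on balls of radius $\sim \e_n^{-1}$. Standard $L^\infty$-bounds for $Q_{\e_n}$ together with interior elliptic regularity bootstrap to uniform $C^{k,\alpha}_{loc}$-estimates, so a diagonal subsequence converges in $C^2_{loc}(\mathbb{R}^3)$ to a smooth $Q$ with $\mathrm{dist}(Q(0),\mathcal{N})\ge \delta_0$, hence non-constant. Local minimality of $Q$ follows from the standard rescaled-competitor argument: for any compactly supported symmetric traceless perturbation $\phi$, the map $Q_{\e_n}+\phi((\cdot-x_n)/\e_n)$ competes against $Q_{\e_n}$ on $B_{R\e_n}(x_n)$, and passing to the limit using $C^2_{loc}$-convergence yields $I(Q)\le I(Q+\phi)$.

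\textbf{Part (ii): asymptotic hedgehog behaviour.}
To analyse $Q$ as $|x|\to\infty$, I would perform a second, outer blow-down $Q_R(x):=Q(Rx)$ with $R\to\infty$. A monotonicity-type formula for stationary points of $I$, combined with the finite defect-core energy of $Q$ from Part~(i), yields uniform control of the normalised energy on annuli as $R\to\infty$. Extract a tangent map $Q_\infty$: the rescaling multiplies the potential by $R^2$, so the limit forces $f_b(Q_\infty)\equiv 0$, whence $Q_\infty = s_+(n_\infty\otimes n_\infty - \tfrac13\mathrm{Id})$ for some $n_\infty:\mathbb{R}^3\setminus\{0\}\to \mathbb{S}^2$. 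A lower-semicontinuity and recovery-sequence argument shows $n_\infty$ is a minimising harmonic map, so by Brezis--Coron--Lieb \cite{bcl} we have $n_\infty(x)=T(x/|x|)$ for some $T\in O(3)$. Theorem~\ref{uniquenessthm} removes subsequence dependence, upgrading this to genuine convergence $Q(x)\to s_+(T(x/|x|)\otimes T(x/|x|)-\tfrac13\mathrm{Id})$. Identifying the particular rotation $T$ with the one arising in the Brezis--Coron--Lieb expansion of $n_*$ at $x_0$ (Theorem~\ref{match in out}) is then carried out by matching the inner and outer profiles at a common intermediate scale $\e_n\ll \rho\ll 1$: there, $Q_{\e_n}(x_n+\rho z)$ is simultaneously close to the asymptotic hedgehog obtained from $Q$ and to the uniaxial tangent profile of $n_*$ at $x_0$, forcing the rotations to coincide.

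\textbf{Part (iii): uniform convergence on shrinking annuli.}
The third item I would prove by contradiction. Suppose there exist $y_n$ with $R_n\e_n\le |y_n|\le r$ and $|Q_{\e_n}(x_n+y_n)-Q_*(x_0+y_n)|\ge \delta$. Set $\rho_n:=|y_n|$ and $z_n:=y_n/\rho_n\in \mathbb{S}^2$. If $\liminf \rho_n>0$, the $C^k_{loc}(\Omega\setminus \mathcal{S}(n_*))$ convergence from \cite{mz,nz} applied at $x_0+y_n$ contradicts the lower bound outright. Otherwise $\rho_n\to 0$, and I introduce the intermediate-scale blow-up $\tilde Q^n(z):=Q_{\e_n}(x_n+\rho_n z)$, which carries effective small parameter $\e_n/\rho_n\to 0$ and rescaled energy $\rho_n^{-1}I_{\e_n}(Q_{\e_n},B_{\rho_n}(x_n))$, uniformly bounded by the expected defect-energy scaling $I_{\e_n}(\cdot,B_\rho)=O(\rho)+\text{core}$. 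Applying the vanishing-elasticity analysis of \cite{mz,nz} on $B_1(0)$ gives $\tilde Q^n \to s_+(T(z/|z|)\otimes T(z/|z|)-\tfrac13\mathrm{Id})$ in $C^2_{loc}(B_1\setminus\{0\})$, with the same $T$ identified in Part~(ii); the Brezis--Coron--Lieb asymptotics of $n_*$ at $x_0$ likewise give $Q_*(x_0+\rho_n z)$ converging uniformly on $\{|z|=1\}$ to the same tangent map. Evaluating both at $z_n$ and extracting a convergent subsequence $z_n\to z_\infty$ produces the contradiction.

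\textbf{Anticipated main obstacle.}
The critical step is Part~(ii): propagating the uniaxial constraint out to infinity for the only \emph{locally} minimising $Q$, and above all pinning down the rotation $T$. The uniqueness of the asymptotic profile (Theorem~\ref{uniquenessthm}) is delicate for Landau--de Gennes because the set of local minimisers of the reduced blow-up functional with hedgehog data at infinity is not a priori a singleton, and the rotation-matching at the intermediate scale (Theorem~\ref{match in out}) requires simultaneously controlling a genuine tensor PDE at scale $\e_n$ and a harmonic map at scale $1$, which is where most of the technical work should concentrate.
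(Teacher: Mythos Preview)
Your proposal follows essentially the same architecture as the paper: choose $x_n$ as a maximiser of $\mathrm{dist}(Q_{\e_n},\mathcal{N})$, extract a $C^2_{loc}$ blow-up limit $Q$, blow down to find a homogeneous degree-one tangent map at infinity, identify the rotation $T$ by matching with the outer profile, and then run a contradiction argument at intermediate scales for Part~(iii). Parts~(i) and the existence of \emph{some} tangent map in Part~(ii) are fine as written.

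The genuine gap is in the matching step, and it is more serious than your ``anticipated obstacle'' paragraph suggests. You write that at a scale $\e_n\ll\rho\ll 1$ the map $Q_{\e_n}(x_n+\rho z)$ is ``simultaneously close'' to the asymptotic of $Q$ at infinity and to the tangent profile of $n_*$. But the blow-up convergence $Q_{\e_n}(x_n+\e_n y)\to Q(y)$ holds only on \emph{bounded} sets of $y$, whereas the tangent-map convergence $Q(Ry)\to\Psi(y)$ requires $R\to\infty$: there is no a~priori overlapping scale where both limits apply. The same issue recurs in Part~(iii) when you assert that the intermediate blow-up $\tilde Q^n$ converges to a hedgehog with the \emph{same} $T$; the vanishing-elasticity analysis by itself gives only some rotation.

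The paper closes this gap by proving quantitative decay of the radial derivative at two levels. For the limit map $Q$ it shows (Proposition~\ref{decay est})
\[
\int_{|x|>R}\frac{1}{|x|}\Bigl|\frac{\partial Q}{\partial r}\Bigr|^2\,dx\le C\,\frac{\log R}{R^2},
\]
which, after summing over dyadic shells, gives a Cauchy estimate $\|Q_{R_1}-Q_{R_2}\|_{L^2(\mathbb{S}^2)}\le C\sqrt{\log R_1}/R_1$ and hence uniqueness of the tangent map. For the minimisers $Q_{\e_n}$ themselves it proves (Lemma~\ref{decay radial})
\[
\int_{R\e_n\le|x|\le 2R\e_n}\frac{1}{|x|}\Bigl|\frac{\partial Q_{\e_n}(x+x_n)}{\partial r}\Bigr|^2\,dx\le \frac{C}{R^4},
\]
and then telescopes over dyadic annuli from scale $r_0\e_n$ up to scale $r_n$ to propagate the inner asymptotic rotation out to the outer one, forcing them to agree. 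This second estimate is also what drives Part~(iii). Both estimates rely on the fine structure of the projected equation for $Q^\sharp=\mathbb{P}(Q)$ from \cite{nz}, and constitute the technical core your outline is missing.
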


Our results further improve the convergence results in \cite{mz,nz} by showing that the minimizer $Q_{\e_n}$ of the Landau-de Gennes model can be well approximated by the Oseen-Frank minimizer $Q_*$ outside the $O(\e_n)$ neighborhood of the point defect (such neighborhood can be regarded as the defect core). The blow-up limit $Q$ contains the information of the uniform structure of the defect core and its asymptotic behavior at infinity is inherited from the profile near the singularity of $Q_*$. The arguments essentially follow \cite{mp} by Millot-Pisante, which focuses on the similar problem concerning local minimizers for 3-D Ginzburg-Landau functional. However, there are several major differences from our arguments and those of \cite{mp}. On the one hand, the tensor structure gives rise to significant difficulty in our analysis. On the other hand, in \cite{mp} the quantification results of the defect measure from \cite{lw1,lw2} play a crucial role in the proof of strong $H^1$ convergence (see \cite[Proposition 3.1, Proposition 4.1]{mp}), while in this paper we rule out the possible defect measure and obtain strong $H^1$ convergence of  blow-up/blow-down sequences in a more direct way by simply using minimality and the Luckhaus' Lemma (see the proofs of Lemma \ref{strong convergence of Vn}, Theorem \ref{tangentmap} and Lemma \ref{decay radial}).

Our study was motivated by \cite{virga} where numerical investigations indicated that the behaviour near the singularity of the limiting harmonic maps has a universal profile, that is independent of the boundary conditions or the geometry of the domain. This universal profile has an outter part, resembling a so-called hedgehog pattern, and an inner part that has axial symmetry. Our investigation is capable of providing a rigorous interpretation of the studies in \cite{virga} in what concerns the outter part. Stuyding analytically the universal features in the inner part  seems to be a significant analytical challenge.
 
A complete characterization for the behavior of a global minimizer $Q_\e$ inside the defect core is still open. Many research works focus on several typical configurations of the defect core and their stability. Among them, the radial hedgehog solution with the form $Q(x)=r(x)(\f{x}{|x|}\otimes \f{x}{|x|}-\f13{I_d})$ is most extensively studied (see for example \cite{ss,gm1999,ma2012,lamy2013,insz1,insz2}). This configuration is uniaxial everywhere and vanishes at the origin. However, in certain parameter regime the radial hedgehog becomes unstable and biaxiality has to appear near the defect core. Such phenomenon is called ``biaxial escape" and one can refer to \cite{insz2,cl2017,hmp} for rigorous interpretations of this phenomenon within Landau-de Gennes theory at low temperature regime. There are mainly two types of biaxial core structure: the half-degree ring disclination and the split-core solution. These two biaxial configurations have been discovered and studied numerically \cite{mg2000,hqz} and recently rigorously constructed in \cite{yu2020,dmp2,ty} in the axially symmetric setting.

The article is organized as follows. In Section \ref{preliminary estimates}, we introduce the basic mathematical setting of our problem and recall some previous results and estimates that will be used in the rest of the paper. In Section \ref{convergence of blow-up maps}, we study the properties of $Q_{\e_n}$ near a small neighborhood $B_{r_n}(x_0)$ of the singular point $x_0$ and establish the existence of the blow-up limit $Q$ in Proposition \ref{oqconverge}. In Section \ref{tangentmap at infty} we study the behavior of the blow-up limit $Q(x)$ when $|x|\ri\infty$ by proving its tangent map at infinity is just the asymptotic profile of $Q_*$ at $x_0$. The proof is separated into several steps. We first show that there exists a homogeneous degree-1 tangent map of $Q$ at infinity; then we prove the uniqueness of the tangent map; for the last step we show this unique tangent map has to coincide with the hedgehog configuration of $Q_*$ near $x_0$. Finally in Section \ref{shrinking domain} we establish the uniform convergence of $Q_{\e_n}(x_n+x)$ to $Q_*(x_0+x)$ in varing domains $B_r\setminus B_{R_n\e_n}$ for any $R_n\ri\infty$.

\section{Mathematical formulation and preliminary estimates} \label{preliminary estimates}
Let $\Omega$ be an open bounded simply-connected domain in $\BR^3$. We denote by $\mathcal{Q}_0$ the set of traceless symmetric $3\times 3$ matrices, i.e.
\beqo
\mathcal{Q}_0:=\{Q\in \mathcal{M}^{3\times 3},\; Q=Q^T\}.
\eeqo
Consider a Landau-de Gennes functional of the form
\beq\label{Landau-deGennes}
I_\e(Q,\Om)=\int_{\Om}\left[ \f{1}{2}|\na Q|^2+\f{1}{\e^2} f_b(Q)\right]\,dx,\quad Q\in H^1(\Om,\mathcal{Q}_0),
\eeq
with the Dirichlet boundary condition
\beq\label{bdy-con}
 Q|_{\pa\Om}=Q_b=s_+(n_b\otimes n_b-\f13 \mathrm{Id}),\qquad n_b\in C^\infty(\pa\Om,\mathbb{S}^2).
\eeq
That is to say, $Q_b$ is a smooth function taking values in $\mathcal{N}$ which is defined later in \eqref{def of min manifold}.

When $\e=1$, we write
\beq\label{def of I}
I(Q,\Om):=\int_{\Om}\left[ \f{1}{2}|\na Q|^2+ f_b(Q)\right]\,dx
\eeq
The bulk potential is of the form
\beq\label{bulkpotential}
f_b(Q)=-\f{a^2}{2}\mathrm{tr}(Q^2)-\f{b^2}{3}\mathrm{tr}(Q^3)+\f{c^2}{4}[\mathrm{tr}(Q^2)]^2+C.
\eeq
where $C$ is the constant that ensures $\inf\limits_{Q\in\mathcal{Q}_0}f_b(Q)=0$.

We introduce the notion of local minimizers of the energy in the following sense.
\begin{definition}\label{def local minimizer}
Let $Q\in H^1_{loc}(D_0,\mathcal{Q}_0)$ for some domain $D_0\subseteq \BR^3$ ($D_0$ could be $\BR^3$). $Q$ is said to be a local minimizer of $I(\cdot,D_0)$ if
\beqo
I(Q,D)\leq I(V,D)
\eeqo
for any bounded open set $D\Subset D_0$ and $V\in  H^1_{loc}(\Omega,\mathcal{Q}_0)$ such that $Q-V\in H_0^1(D,\mathcal{Q}_0)$.
\end{definition}

The Euler-Lagrange equation for the functional $I_\e$ is given by
\beq\label{eleq}
\D Q_\e=\f{1}{\e^2}(-a^2Q_\e-b^2[Q_\e^2-\f13\mathrm{tr}(Q_\e)^2\,\mathrm{Id}]+c^2\mathrm{tr}(Q_\e)^2Q_\e),
\eeq
where the term $\f{1}{3\e^2} b^2\mathrm{tr}(Q_\e)^2\,\mathrm{Id}$ is a Lagrange multiplier that accounts for the tracelessness constraint.

It is well-known that the bulk potential $f_b$ takes its minimum value on a sub-manifold of $\mathcal{Q}_0$ defined by
\beq\label{def of min manifold}
\mathcal{N}=\{Q=s_+(n\otimes n-\f13 \mathrm{Id}),\; n\in\mathbb{S}^2\},\quad s_+=\f{b^2+\sqrt{b^4+24a^2c^2}}{4c^2}.
\eeq

In \cite{mz,nz}, it was shown that subjected to the Dirichlet boundary condition \eqref{bdy-con}, up to some subsequence, the minimizers $Q_\e$ of $I_\e$ converge to the minimizer $Q_*$ of the functional
\beq\label{harmonicmapenergy}
I_*[Q]=\int_{\Om} |\na Q|^2\,dx,\quad Q\in H^1(\Om,\mathcal{N}), \; Q=Q_b\text{ on }\pa\Om
\eeq
By direct calculation, we have $|\na Q|^2= 2s_+^2|\na n|^2$ for $Q=s_+(n\otimes n-\f13 \mathrm{Id})$. It follows that on the simply-connected domain $\Omega$ this $Q_*$ can be written as $Q_*(x)=s_+(n_*(x)\otimes n_*(x)-\f13\mathrm{Id})$ where $n_*(x)\in H^1( \Om, \BS)$ is a minimizing harmonic map. More precisely, the following results was proved in \cite{mz,nz}.
\begin{theorem}\label{convergethm}
Let $\Om$ be an open bounded simply-connected subset of $\BR^3$ and $Q_\e$ be a minimizer of the minimization problem \eqref{Landau-deGennes}--\eqref{bdy-con}. For any sequence $\e_k\ri 0$, there exists a subsequence, still denoted by $\e_k$, such that $Q_{\e_k}$ converges strongly in $H^1$-norm to a minimizer $Q_*$ of the \eqref{harmonicmapenergy}. Let $\mathrm{Sing}(Q_*)$ denote the singular set of $Q_*$, then
\beqo
Q_{\e_k}\ri Q_*\text{ in }C^{j}_{loc}(\Om\backslash \mathrm{Sing}(Q_*),\mathcal{Q}_0),\;\;\forall j\geq 1.
\eeqo
\end{theorem}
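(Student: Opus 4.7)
The plan is to carry out four steps: (i) a uniform $H^1$ bound on $Q_\e$ via a uniaxial competitor; (ii) extraction of a weak $H^1$ limit $Q_*$ with values in $\mathcal{N}$ that lifts to a unit-vector field $n_*$; (iii) upgrade to strong $H^1$ convergence by energy matching; (iv) $C^j_{loc}$ convergence off $\mathrm{Sing}(Q_*)$ through an $\e$-regularity lemma.

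For (i)--(ii), since $\Om$ is simply connected the datum $n_b$ extends to some $\bar n\in H^1(\Om,\BS)$, and $\bar Q=s_+(\bar n\otimes\bar n-\f13\mathrm{Id})$ is admissible in \eqref{Landau-deGennes} with $f_b(\bar Q)\equiv 0$; minimality of $Q_\e$ then gives
\beqo
I_\e(Q_\e,\Om)\leq \f12\int_\Om|\na\bar Q|^2\,dx=s_+^2\int_\Om|\na\bar n|^2\,dx=:M,
\eeqo
so $\|\na Q_\e\|_{L^2}\leq C$ and $\int f_b(Q_\e)\,dx\leq M\e^2$. Along a subsequence $Q_{\e_k}\convw Q_*$ in $H^1$ and a.e., and Fatou forces $\int f_b(Q_*)=0$, i.e.\ $Q_*(x)\in\mathcal{N}$ almost everywhere. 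Because $\mathcal{N}\simeq\mathbb{RP}^2$ and $\Om$ is simply connected, a Sobolev orientability argument produces $n_*\in H^1(\Om,\BS)$ with $Q_*=s_+(n_*\otimes n_*-\f13\mathrm{Id})$ and trace $n_b$.

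For (iii), any $n\in H^1(\Om,\BS)$ with $n|_{\pa\Om}=n_b$ produces a competitor $V=s_+(n\otimes n-\f13\mathrm{Id})$ with $f_b(V)\equiv 0$, so minimality gives $I_{\e_k}(Q_{\e_k})\leq I_{\e_k}(V)=s_+^2\int_\Om|\na n|^2\,dx$. Combined with weak lower semicontinuity this yields
\beqo
s_+^2\int_\Om|\na n_*|^2\,dx\leq \liminf_k I_{\e_k}(Q_{\e_k})\leq s_+^2\int_\Om|\na n|^2\,dx,
\eeqo
so $n_*$ is a minimizing harmonic map. Choosing $V=Q_*$ forces $\lim_k I_{\e_k}(Q_{\e_k})=\f12\int|\na Q_*|^2\,dx$, which gives both $\int|\na Q_{\e_k}|^2\,dx\to\int|\na Q_*|^2\,dx$ and $\f{1}{\e_k^2}\int f_b(Q_{\e_k})\,dx\to 0$; combined with the weak convergence this upgrades the $H^1$ convergence to strong.

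For (iv), the key tool is a small-energy $\e$-regularity lemma: there exist $\eta_0>0$ and constants $C_j$ such that whenever $\e\leq r$ and $r^{-1}\int_{B_r(x_0)}\bigl(\f12|\na Q_\e|^2+\f{1}{\e^2}f_b(Q_\e)\bigr)\,dx\leq\eta_0$, one has $\|\na^j Q_\e\|_{L^\infty(B_{r/2}(x_0))}\leq C_j r^{-j}$. At any $x_0\notin\mathrm{Sing}(Q_*)$ the scaled energy of $Q_*$ on $B_r(x_0)$ is $O(r)$ and the strong $H^1$ convergence transfers this smallness to $Q_{\e_k}$ for all large $k$; the lemma then applies, and Arzel\`a--Ascoli together with the $L^2$ convergence yields $Q_{\e_k}\to Q_*$ in $C^j_{loc}(\Om\setminus\mathrm{Sing}(Q_*))$. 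The main obstacle is the $\e$-regularity lemma itself: the cubic term in $f_b$ breaks the linear symmetry of the Ginzburg--Landau case and the Lagrange multiplier in \eqref{eleq} must be handled carefully, so the proof requires a Bochner-type inequality, a Pohozaev/monotonicity identity adapted to \eqref{eleq}, and a clearing-out estimate forcing $f_b(Q_\e)$ to be pointwise small on small balls. A secondary subtlety is the Sobolev lift in (ii), which can fail for generic $H^1$-maps into $\mathbb{RP}^2$ but is available here thanks to the simply-connected geometry of $\Om$ and the uniaxial form of the boundary datum.
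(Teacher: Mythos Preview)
Your outline is correct and matches the strategy of the original references, but note that the paper itself does \emph{not} prove this theorem: it is quoted as a result already established in \cite{mz,nz} (see the sentence ``More precisely, the following results was proved in \cite{mz,nz}'' immediately preceding the statement). The only pieces of the argument that appear in the present paper are the two ingredients you correctly single out as the heart of step~(iv), namely the monotonicity formula (Lemma~\ref{monotonicitylemma}) and the small-energy regularity (Lemma~\ref{smallenergy}), both cited from \cite{mz}.

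Your sketch is faithful to how \cite{mz,nz} actually proceed: steps (i)--(iii) are exactly the competitor/Fatou/energy-matching argument of \cite{mz}, and step~(iv) is the $\e$-regularity route of \cite{mz} with the higher-order $C^j$ bootstrap carried out in \cite{nz}. Your identification of the main technical obstacle --- the Bochner/clearing-out machinery behind the small-energy lemma, complicated by the cubic term and the trace-free Lagrange multiplier --- is accurate, as is the remark that the Sobolev lift in (ii) relies on the simple-connectedness hypothesis on $\Om$. There is nothing to correct; just be aware that in the context of this paper the theorem is background, not something to be re-proved.
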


The above theorem gives a nice convergence result of $Q_{\e_k}$ to $Q_*$ away from the singular set $\mathrm{Sing}(Q_*)$. In this note we would like to investigate the behavior of $Q_{\e_k}$ near $\mathrm{Sing}(Q_*)$.

For the limiting harmonic map $Q_*=s_+(n_*\otimes n_*-\f13\mathrm{Id})$, we recall the classical result of Schoen-Uhlenbeck \cite{su} and Brezis-Coron-Lieb \cite[Theorem 1.2]{bcl} that the singular set $\mathrm{Sing}(Q_*)=\mathrm{Sing}(n_*)$ is a set of finitely many isolated points, and near each singular point $x_0$, one has
\beqo
\lim\limits_{r\ri 0}n_*(r(x-x_0))= T\frac{x-x_0}{|x-x_0|},
\eeqo
for some $T\in O(3)$. The convergence is strongly in $H^1(B_1)$ and uniformly in any compact subset of $B_1\backslash \{0\}$. Moreover, using the technique of integrability of a Jacobi field ( see for instance \cite[Theorem 6.3]{simon}), the convergence rate can be controlled by a positive power of $r$,
\beq\label{Jacobi}
\left|n_*(x_0+x)-T\f{x}{|x|}\right|\leq C|x|^{\al}, \quad \forall |x|<r_0.
\eeq
Here $C>0,r_0>0,\al\in(0,1)$ are all positive constants depending just on $n^*$ and $x_0$.

Also there are two basic ingredients in our analysis, which are the monotonicity formula and the small energy regularity estimate, which are both established in \cite{mz}. We list them below.
\begin{lemma}\label{monotonicitylemma}
(Monotonicity lemma, \cite[Section 4, Lemma 2]{mz}) Let $Q_\e$ be a global minimizer of $I_\e$, then
\beq\label{monotonicityformula}
\f{\pa}{\pa R}(\f{1}{R}\int_{B_R}\f{1}{2}|\na Q_\e|^2+\f{1}{\e^2}f_b(Q_\e)\,dx)=\f{1}{R}\int_{\pa B_R}\left|\f{\pa Q_\e}{\pa r} \right|^2\,d\sigma+\f{2}{R^2}\int_{B_R}\f{f_b(Q_\e)}{\e^2}\,dx
\eeq
\end{lemma}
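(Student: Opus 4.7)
The plan is to derive this monotonicity identity via a Pohozaev-type calculation followed by a direct differentiation of the parametrized integral. Since $Q_\e$ is a global minimizer, it solves the Euler-Lagrange equation \eqref{eleq} and is smooth by standard elliptic regularity, so all integrations by parts below are justified.

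First I would take the tensor inner product of \eqref{eleq} with $x^k\partial_k Q_\e$ and integrate over $B_R$. A pleasant simplification occurs immediately: the Lagrange multiplier term $\frac{b^2}{3\e^2}\mathrm{tr}(Q_\e^2)\,\mathrm{Id}$ pairs to zero against the traceless tensor $x^k\partial_k Q_\e$, so the right-hand side collapses to $\int_{B_R}\frac{1}{\e^2}x^k\partial_k f_b(Q_\e)\,dx$, which on integration by parts (using $\partial_k x^k=3$) gives $-\frac{3}{\e^2}\int_{B_R}f_b(Q_\e)\,dx+\frac{R}{\e^2}\int_{\partial B_R}f_b(Q_\e)\,d\sigma$. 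For the left-hand side $\int_{B_R}\Delta Q_\e\cdot(x^k\partial_k Q_\e)\,dx$, two successive integrations by parts (the second applied to $\int_{B_R}x^k\partial_k(\tfrac12|\nabla Q_\e|^2)\,dx$) yield $R\int_{\partial B_R}|\partial_r Q_\e|^2\,d\sigma-\tfrac{R}{2}\int_{\partial B_R}|\nabla Q_\e|^2\,d\sigma+\tfrac{1}{2}\int_{B_R}|\nabla Q_\e|^2\,dx$. Equating the two expressions produces the Pohozaev identity
\begin{equation*}
R\int_{\partial B_R}|\partial_r Q_\e|^2\,d\sigma + \tfrac{1}{2}\int_{B_R}|\nabla Q_\e|^2\,dx - \tfrac{R}{2}\int_{\partial B_R}|\nabla Q_\e|^2\,d\sigma = -\tfrac{3}{\e^2}\int_{B_R}f_b(Q_\e)\,dx + \tfrac{R}{\e^2}\int_{\partial B_R}f_b(Q_\e)\,d\sigma.
\end{equation*}

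Finally, setting $E(R):=\int_{B_R}[\tfrac12|\nabla Q_\e|^2+\tfrac{1}{\e^2}f_b(Q_\e)]\,dx$, a direct differentiation gives $\frac{d}{dR}(E(R)/R)=-E(R)/R^2+R^{-1}\int_{\partial B_R}[\tfrac12|\nabla Q_\e|^2+\tfrac{1}{\e^2}f_b(Q_\e)]\,d\sigma$. Dividing the Pohozaev identity by $R^2$ isolates $R^{-1}\int_{\partial B_R}|\partial_r Q_\e|^2\,d\sigma$ as the same combination of boundary and bulk integrals. Matching the two formulas, all $|\nabla Q_\e|^2$ contributions and the boundary $f_b(Q_\e)$ term cancel, leaving exactly $\frac{d}{dR}(E(R)/R)=R^{-1}\int_{\partial B_R}|\partial_r Q_\e|^2\,d\sigma+2R^{-2}\int_{B_R}f_b(Q_\e)/\e^2\,dx$, as claimed. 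The only step that requires any care is the tracelessness argument that eliminates the Lagrange multiplier; every other step is routine. A conceptually equivalent route is the inner-variation identity applied to $X(x)=x$ smoothly truncated at $\partial B_R$, which is the standard mechanism producing such monotonicity formulas in variational problems of this type, but the direct Pohozaev computation above is most transparent here.
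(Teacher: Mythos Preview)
Your proof is correct. The paper does not actually give a proof of this lemma; it is quoted without proof from \cite[Section~4, Lemma~2]{mz}, and your Pohozaev-type derivation (testing the Euler--Lagrange equation against $x\cdot\nabla Q_\e$ and then differentiating $E(R)/R$) is precisely the standard argument used there.
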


Now we define
\beq\label{def energy density}
e_\e(Q):=\f{1}{2}|\na Q|^2+\f{1}{\e^2}f_b(Q),
\eeq
which denotes the energy density of the Landau-de Gennes functional \eqref{Landau-deGennes} for $Q\in H^1(\Om, \mathcal{Q}_0)$. The following small energy argument holds.
\begin{lemma}\label{smallenergy}
(Small energy regularity, \cite[Section 4, Lemma 7]{mz}) Let $\Om_{\e_k}$ be global minimizers of \eqref{Landau-deGennes}--\eqref{bdy-con} with coefficient $\e_k$ and suppose $Q_{\e_k}\ri Q_*$ in $H^1(\Om)$. Let $K\subset \Om$ be a compact set which contains no singularity of $Q_*$. There exists $C_1>0,\,C_2>0, \,\e_0>0$ such that  for $a\in K$, $0<r<\mathrm{dist}(a,\pa K)$, $\e_k<\e_0$ we have
\beqo
\f{1}{r}\int_{B_r(a)}e_{\e_k}(Q_{\e_k})\,dx\leq C_1,
\eeqo
then
\beqo
r^2\sup\limits_{B_{r/2}(a)}e_{\e_k}(Q_{\e_k})\leq C_2.
\eeqo
\end{lemma}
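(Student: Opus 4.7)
The plan follows the standard $\varepsilon$-regularity strategy for minimizers of semilinear elliptic systems with a small parameter: rescale to eliminate $\e$, derive a Bochner-type subsolution inequality for the energy density, and close via Moser iteration seeded by the small-energy bound. Specifically, setting $\tilde Q(y):=Q_{\e_k}(a+ry)$ on $B_1$, the chain rule gives $e_{\tilde\e}(\tilde Q)(y)=r^2\,e_{\e_k}(Q_{\e_k})(a+ry)$ with $\tilde\e:=\e_k/r$, and $\tilde Q$ is a global minimizer of $I_{\tilde\e}$ on $B_1$. The energy-growth hypothesis becomes $\int_{B_1}e_{\tilde\e}(\tilde Q)\,dy\le C_1$ and the conclusion reduces to the scale-invariant estimate $\sup_{B_{1/2}}e_{\tilde\e}(\tilde Q)\le C_2$, to be proved uniformly in $\tilde\e>0$.

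Next, a universal pointwise bound $|\tilde Q|\le M$ on $B_1$ follows from the maximum principle applied to $|\tilde Q|^2$, using that $\langle f_b'(Q),Q\rangle>0$ for $|Q|$ sufficiently large (a standard Landau--de Gennes estimate, cf.\ \cite{mz}); with this bound, $|D^2 f_b(\tilde Q)|$ is uniformly controlled. Differentiating the Euler--Lagrange equation \eqref{eleq} then produces the Bochner identity
\begin{equation*}
\Delta e_{\tilde\e}(\tilde Q)=|\nabla^2\tilde Q|^2+\tfrac{1}{\tilde\e^4}|f_b'(\tilde Q)|^2+\tfrac{2}{\tilde\e^2}\,D^2 f_b(\tilde Q)[\nabla\tilde Q,\nabla\tilde Q].
\end{equation*}
The first two terms are nonnegative; the indefinite third term is controlled using the $L^\infty$ bound together with the algebraic structure of $f_b$ (in particular, $f_b'$ vanishes to first order on the equilibrium manifold $\mathcal N$, so that the coercive term $|f_b'(\tilde Q)|^2/\tilde\e^4$ dominates near $\mathcal N$, while the universal $L^\infty$ bound controls the coefficient elsewhere). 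The outcome is a scale-invariant subsolution inequality of the form $-\Delta e_{\tilde\e}(\tilde Q)\le C\,e_{\tilde\e}(\tilde Q)$.

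A standard Moser iteration for nonnegative subsolutions of $-\Delta u\le Cu$, seeded by the $L^1$ bound $\int_{B_1}e_{\tilde\e}(\tilde Q)\,dy\le C_1$ (with $C_1$ chosen sufficiently small so that the iteration closes uniformly in $\tilde\e$), then yields $\sup_{B_{1/2}}e_{\tilde\e}(\tilde Q)\le C_2$. The main obstacle---and the genuinely delicate point---is the absorption of the singular Bochner term $\tfrac{1}{\tilde\e^2}D^2 f_b(\tilde Q)[\nabla\tilde Q,\nabla\tilde Q]$: its coefficient blows up as $\tilde\e\downarrow 0$, and only the combined use of the universal $L^\infty$ bound, the vanishing of $f_b$ and $f_b'$ on $\mathcal N$, and the minimality of $\tilde Q$ makes the Moser constants independent of $\tilde\e$.
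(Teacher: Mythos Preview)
The paper does not give its own proof of this lemma; it is quoted verbatim from \cite[Section~4, Lemma~7]{mz}. So there is no in-paper argument to compare against, and the relevant question is only whether your outline is correct.

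Your overall architecture (rescale, Bochner identity for $e_{\tilde\e}$, Moser iteration) is the standard one and is essentially what \cite{mz} does, following Chen--Struwe. However, the step where you pass from the Bochner identity to a \emph{linear} subsolution inequality $-\Delta e_{\tilde\e}\le C\,e_{\tilde\e}$ with $C$ independent of $\tilde\e$ is not right, and your justification for it does not work. The indefinite term is $\tfrac{2}{\tilde\e^2}D^2f_b(\tilde Q)[\nabla\tilde Q,\nabla\tilde Q]$, which involves $|\nabla\tilde Q|^2$; the ``coercive'' term $\tfrac{1}{\tilde\e^4}|f_b'(\tilde Q)|^2$ involves no gradient at all and therefore cannot absorb it. Away from $\cn$ the $L^\infty$ bound only gives $|D^2f_b(\tilde Q)|\le C_M$, leaving you with $\tfrac{C_M}{\tilde\e^2}|\nabla\tilde Q|^2$, which is not bounded by $C\,e_{\tilde\e}$ uniformly in $\tilde\e$. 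A telling symptom: if $-\Delta e_{\tilde\e}\le C e_{\tilde\e}$ really held with $C$ independent of $\tilde\e$, Moser iteration would give the sup bound from the $L^1$ bound with \emph{no} smallness assumption at all, contradicting the role of $C_1$ in the statement.

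The fix is to aim for the quadratic inequality. Using that $D^2f_b\ge 0$ on $\cn$ (since $\cn$ is the minimum set) together with the $L^\infty$ bound, one has $D^2f_b(Q)\ge -C\sqrt{f_b(Q)}\,\mathrm{Id}$ for $|Q|\le M$: near $\cn$ this is a first-order Taylor estimate, and away from $\cn$ it follows from $f_b\ge c_0>0$. Hence
\[
\tfrac{2}{\tilde\e^2}D^2f_b(\tilde Q)[\nabla\tilde Q,\nabla\tilde Q]\ \ge\ -\tfrac{C}{\tilde\e^2}\sqrt{f_b(\tilde Q)}\,|\nabla\tilde Q|^2
\ \ge\ -C\Big(\tfrac{f_b(\tilde Q)}{\tilde\e^2}+1\Big)|\nabla\tilde Q|^2
\ \ge\ -C\,e_{\tilde\e}(e_{\tilde\e}+1),
\]
using $\sqrt{t}/\tilde\e\le \tfrac12(t/\tilde\e^2+1)$. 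This yields $-\Delta e_{\tilde\e}\le C\,e_{\tilde\e}(e_{\tilde\e}+1)$ with $C$ independent of $\tilde\e$, and now Moser iteration genuinely requires the smallness of $\int_{B_1}e_{\tilde\e}$ to close, producing the desired $\sup_{B_{1/2}}e_{\tilde\e}\le C_2$.
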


\section{Convergence of blow-up maps}\label{convergence of blow-up maps}

Take $\e_k\ri 0$ to be the sequence in Theorem \ref{convergethm}, satisfying $Q_{\e_k}\ri Q_*$ in $C^{j}_{loc}(\Om\backslash \mathrm{Sing}(Q_*))$. Assume $ \mathrm{Sing(Q_*)}=\{0\}$, and $n_*(x)\sim \f{x}{|x|}$ near the singular point $0$. We write  the hedgehog map as
\beq\label{Phi}
\Phi(x):=s_+\left(\f{x}{|x|}\otimes\f{x}{|x|}-\f13\mathrm{Id}\right)
\eeq
Now we first fix a sequence of radiuses $r_n$ such that
\begin{equation}
\label{def:rn} r_n\ri 0\quad  \text{as }n\ri\infty.
\end{equation}
According to \cite[Corollary 7.12]{bcl} and \cite[Section 8]{simon0}, it holds that
\beqo
\|Q_*(r_nx)-\Phi(x)\|_{C^2(B_{3/2}\setminus B_{1/2})}+\|Q_*(r_nx)-\Phi(x)\|_{H^1(B_1)}\ri 0,\quad \text{as }n\ri\infty.
\eeqo
Once we fix $\{r_n\}$, we can then choose a subsequence of $\{\e_n\}$, still denoted by $\{\e_n\}$, such that
\beq\label{relation rn en}
\f{r_n}{\e_n}\ri \infty\quad  \text{ as }n\ri\infty,
\eeq
 and
\begin{align}
\label{outside rn uniform conv}&\qquad\qquad \|Q_{\e_n}(x)-Q_*(x)\|_{C^2(\Omega\setminus B_{r_n})}\ri 0 ,\quad \text{as }n\ri\infty , \\
\label{linfty H1 conv} &\|Q_{\e_{n}}(r_nx)-\Phi(x)\|_{C^2(B_{3/2}\setminus B_{1/2})}+\|Q_{\e_n}(r_nx)-\Phi(x)\|_{H^1(B_{1})}\ri 0,\quad \text{as }n\ri\infty.
\end{align}
Here the existence of such $\{\e_n\}$ is guaranteed by Theorem \ref{convergethm}. In the rest of this paper, for convenience we will always work with $\{(r_n, \e_n)\}$ that satisfying \eqref{def:rn}, \eqref{relation rn en}, \eqref{outside rn uniform conv} an \eqref{linfty H1 conv}.


We would like to study the convergence property of the sequence of blow-up maps $Q_{\e_n}(\e_nx)$. Set
\begin{equation*}
    R_n:=\f{r_n}{\e_n}.
\end{equation*}
We first look at the rescaled functions
\beq\label{def:un}
U_n(x):=Q_{\e_n}(r_nx)\quad \text{on }B_1(0).
\eeq
Obviously $U_n$ is a local minimizer, in the sense of Definition \ref{def local minimizer}, of the following functional:
\beqo
\int_{B_1} \left\{\f{1}{2}|\na Q|^2+R_n^2f_b(Q)\right\}\,dx
\eeqo
and satisfies $\|U_n(x)-\Phi(x)\|_{L^\infty(\pa B_1)}\ri 0$ as $n\ri\infty$ due to \eqref{linfty H1 conv}. Then the following lemma holds.
\begin{lemma}\label{bulk0}
$\lim\limits_{n\ri \infty}\int_{B_1} R_n^2 f_b(U_n)\,dx=0$.
\end{lemma}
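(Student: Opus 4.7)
The plan is to combine the monotonicity formula with the strong $H^1$ convergence $U_n\to\Phi$ on $B_1$. Under the rescaling $U_n(x)=Q_{\e_n}(r_nx)$ with $R_n=r_n/\e_n$, a direct change of variables gives
\[
\int_{B_1}\Bigl(\tfrac12|\nabla U_n|^2+R_n^2 f_b(U_n)\Bigr)dx=\tfrac{1}{r_n}\int_{B_{r_n}}e_{\e_n}(Q_{\e_n})\,dy,
\]
and by Lemma~\ref{monotonicitylemma} the right-hand side is non-decreasing in $r_n$, so for any fixed $r_0$ with $r_n<r_0<\mathrm{dist}(0,\partial\Omega)$ it is bounded above by $\tfrac{1}{r_0}\int_{B_{r_0}}e_{\e_n}(Q_{\e_n})\,dy$. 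The strategy is then to pass to the limit $n\to\infty$ for fixed $r_0$, and finally let $r_0\to 0$.

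For fixed $r_0$, one first argues that $\int_\Omega\e_n^{-2}f_b(Q_{\e_n})\,dy\to 0$: test the minimality of $Q_{\e_n}$ against the admissible competitor $Q_*$, which is $\mathcal{N}$-valued so $f_b(Q_*)=0$, and combine with the strong $H^1$ convergence of Theorem~\ref{convergethm} (this gives $\limsup\int\e_n^{-2}f_b(Q_{\e_n})\le\int\tfrac12|\nabla Q_*|^2-\liminf\int\tfrac12|\nabla Q_{\e_n}|^2\le 0$). Since $f_b\ge 0$, this gives the global vanishing, hence $\int_{B_{r_0}}\e_n^{-2}f_b(Q_{\e_n})\,dy\to 0$, and together with strong $H^1$ convergence on $B_{r_0}$ this yields $\int_{B_{r_0}}e_{\e_n}(Q_{\e_n})\,dy\to\tfrac12\int_{B_{r_0}}|\nabla Q_*|^2\,dy$.

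Next I would send $r_0\to 0$: by the monotonicity formula for the minimizing harmonic map $Q_*$ into $\mathcal{N}$, $\tfrac{1}{r_0}\int_{B_{r_0}}\tfrac12|\nabla Q_*|^2\,dy$ decreases to the density $\Theta(Q_*,0)$, and since the tangent map of $Q_*$ at $0$ is exactly $\Phi$ (by the standing assumption $n_*(x)\sim x/|x|$ near $0$ and the $0$-homogeneity of $\Phi$), one has $\Theta(Q_*,0)=\int_{B_1}\tfrac12|\nabla\Phi|^2\,dx$. Combining the two limits yields
\[
\limsup_{n\to\infty}\int_{B_1}\Bigl(\tfrac12|\nabla U_n|^2+R_n^2f_b(U_n)\Bigr)dx\le\int_{B_1}\tfrac12|\nabla\Phi|^2\,dx.
\]
The strong $H^1$ convergence in \eqref{linfty H1 conv} gives $\int_{B_1}\tfrac12|\nabla U_n|^2\,dx\to\int_{B_1}\tfrac12|\nabla\Phi|^2\,dx$, and subtracting forces $\limsup\int_{B_1}R_n^2f_b(U_n)\,dx\le 0$; since the integrand is non-negative, the limit is $0$.

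The main obstacle will be cleanly combining the two sources of information: the uniform $H^1$-energy bound from the monotonicity formula must be matched against the density of the tangent map, and the vanishing of the bulk contribution $\int\e_n^{-2}f_b(Q_{\e_n})$ in the vanishing-elasticity limit must be established before one can interchange the limit in $n$ and the rescaling. Both are standard in the vanishing-elasticity framework (the former via Schoen--Uhlenbeck and Brezis--Coron--Lieb, the latter via the minimality comparison against $Q_*$), but the elegance of the proof lies in recognizing that no Luckhaus competitor is needed for this particular lemma: monotonicity combined with the identification $\Theta(Q_*,0)=\int_{B_1}\tfrac12|\nabla\Phi|^2$ automatically extracts the sharp bound that closes the argument.
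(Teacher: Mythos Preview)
Your proposal is correct and follows essentially the same approach as the paper: both compare the rescaled energy on $B_1$ against the scaled energy at a fixed small radius via the monotonicity formula, use the comparison of $Q_{\e_n}$ with $Q_*$ to show the global bulk term vanishes, pass to the limit in $n$ and then shrink the radius to identify the limiting value with $\int_{B_1}\tfrac12|\nabla\Phi|^2\,dx$ (the paper writes this explicitly as $8s_+^2\pi$), and finally subtract off the Dirichlet part using the strong $H^1$ convergence \eqref{linfty H1 conv}. The only cosmetic difference is that you phrase the small-radius limit via the density $\Theta(Q_*,0)$ and the harmonic-map monotonicity, whereas the paper computes $\tfrac{1}{r}\int_{B_r}\tfrac12|\nabla\Phi|^2=8s_+^2\pi$ directly and chooses $r_\delta$ accordingly; these are equivalent.
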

\begin{proof}
Since $Q_*$ is an admissible map for $I_{\e_n}$, we have
\beqo
\int_{\Om} \f{1}{2}|\na Q_{\e_n}|^2+\f{1}{\e_n^2}f_b(Q_{\e_n})\,dx\leq \int_{\Om}\f12 |\na Q_*|^2\,dx.
\eeqo
On the other hand, $Q_{\e_n}$ converges to $Q_*$ strongly in $H^1$, therefore we have
\beqo
\lim\limits_{n\ri\infty} \int_{\Om} \f{1}{\e_n^2} f_b(Q_{\e_n})\,dx=0
\eeqo
A straightforward calculation shows that $\Phi(x)$ satisfies
\beqo
\f{1}{r}\int_{B_r}\f12 |\na \Phi|^2\,dx=8s_+^2\pi,\quad \forall r>0.
\eeqo
Then using the strong convergence of $Q_{\e_n}(x)$ to $Q_*(x)$, we infer that for any $\d>0$, there exists a $r_\d>0$ such that
\beqo
\lim\limits_{n\ri \infty}\f{1}{r_{\d}}\int_{B_{r_\d}} \f12|\na Q_{\e_n}|^2+\f{1}{\e_n^2}f_b(Q_{\e_n})\,dx\leq 8s_+^2\pi+\d.
\eeqo
Combining the definition of $U_n$ and the monotonicity formula \eqref{monotonicityformula}, we deduce that
\begin{align*}
&\lim\limits_{n\ri\infty}\int_{B_1}\f12|\na U_n|^2+R_n^2f_b(U_n)\,dx\\
=&\lim\limits_{n\ri\infty} \f{1}{r_n}\int_{B_{r_n}}\f12|\na Q_{\e_n}|^2+\f{1}{\e_n^2}f_b(Q_{\e_n})\,dx\\
\leq & \lim\limits_{n\ri\infty} \f{1}{r_\d}\int_{B_{r_\d}} \f12|\na Q_{\e_n}|^2+\f{1}{\e_n^2}f_b(Q_{\e_n})\,dx\\
\leq & 8s_+^2\pi+\d
\end{align*}
Letting $\d\ri 0$ yields
\beq\label{Un8pi}
\lim\limits_{n\ri\infty}\int_{B_1}\f12|\na U_n|^2+R_n^2f_b(U_n)\,dx\leq 8s_+^2\pi.
\eeq
On the other hand, by the strong $H^1$ convergence \eqref{linfty H1 conv} we have
\beqo
\lim\limits_{n\ri\infty}\int_{B_1}\f12|\na U_n|^2\,dx=8s_+^2\pi,
\eeqo
which together with \eqref{Un8pi} implies that
\beqo
\lim\limits_{n\ri \infty}\int_{B_1} R_n^2 f_b(U_n)\,dx=0.
\eeqo
\end{proof}

Now we have the following proposition which says that when $n$ is sufficiently large, the set of points where $U_n(x)$ leaves $\mathcal{N}$ has small measure and concentrates near the origin.
\begin{proposition}\label{diam_estimate}
For any $\d>0$, there exist constants $C_\d$ and $n_\d$ such that
\beq\label{smallness1}
\sup\{|x|, x\in \{\dist(U_n,\cn)\geq\delta\}\}=o(1)\quad \text{as }n\ri\infty,
\eeq
\beq\label{smallness2}
\mathrm{diam}(\{\mathrm{dist}(U_n,\cn)\geq\d\})\leq C_\d R_n^{-1}\quad \forall n\geq n_\d.
\eeq
\end{proposition}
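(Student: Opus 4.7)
Both parts stem from strong $H^1(B_1)$ convergence $U_n\to\Phi$. A weak $H^1$ subsequential limit $U_\infty$ takes values in $\cn$ a.e.\ (from Lemma~\ref{bulk0} and Fatou), equals $\Phi$ on $\pa B_1$ (from \eqref{linfty H1 conv}), and by lower semicontinuity minimizes $\int|\na Q|^2$ among $H^1(B_1,\cn)$-maps with this boundary data; Brezis--Coron--Lieb's uniqueness of the hedgehog then identifies $U_\infty=\Phi$. The energy identity $\int_{B_1}\f12|\na U_n|^2\to 8s_+^2\pi=\int_{B_1}\f12|\na\Phi|^2$ extracted inside the proof of Lemma~\ref{bulk0} promotes this to strong convergence. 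For \eqref{smallness1}: on small balls $B_\rho(a)\subset B_1\setminus\{0\}$, strong $H^1$ convergence and smoothness of $\Phi$ away from $0$ make $\rho^{-1}\int_{B_\rho(a)}e_{1/R_n}(U_n)\,dx$ arbitrarily small once $\rho$ is small and $n$ is large, so Lemma~\ref{smallenergy} followed by elliptic bootstrap of \eqref{eleq} upgrades the convergence to $C^2_{loc}(B_1\setminus\{0\})$; any hypothetical $x_n\to x_\infty\neq 0$ with $\dist(U_n(x_n),\cn)\geq\delta$ would contradict $U_n(x_n)\to\Phi(x_\infty)\in\cn$.

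For \eqref{smallness2}, I would argue by contradiction: assume $x_n^1,x_n^2\in\{\dist(U_n,\cn)\geq\delta\}$ with $D_n:=|x_n^1-x_n^2|$ and $D_nR_n\to\infty$ (by \eqref{smallness1} both $x_n^i\to 0$). Rescale at the intermediate scale $D_n$: $W_n(w):=U_n(x_n^1+D_nw)$ is a local minimizer of $\int\f12|\na W|^2+(R_nD_n)^2f_b(W)\,dw$ on a ball exhausting $\BR^3$, with vanishing parameter $(R_nD_n)^{-1}\to 0$ and uniform linear energy growth $R^{-1}\int_{B_R(0)}e_{1/(R_nD_n)}(W_n)\,dw\leq C$ inherited via the monotonicity formula from $U_n$. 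Along a subsequence, $W_n\to W_\infty$ weakly in $H^1_{loc}$ and in $C^2_{loc}(\BR^3\setminus\Sigma)$ off a discrete concentration set $\Sigma$, with $W_\infty$ a locally minimizing $\cn$-valued harmonic map on $\BR^3$. The $C^2_{loc}(B_1\setminus\{0\})$ convergence of $U_n$ to $\Phi$ combined with $x_n^1\to 0$ identifies the tangent map of $W_\infty$ at infinity as $\Phi$: for any fixed $\eta\neq 0$, $W_n(\eta/D_n)=U_n(x_n^1+\eta)\to\Phi(\eta)$, and consequently $\lim_{R\to\infty}R^{-1}\int_{B_R(0)}e(W_\infty)\,dw=8s_+^2\pi$.

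A monotonicity rigidity then forces $W_\infty=\Phi$ with $\Sigma=\{0\}$. At any singular point $a\in\Sigma$ the density $\Theta(a):=\lim_{r\to 0^+}r^{-1}\int_{B_r(a)}e(W_\infty)\,dw$ is at least $8s_+^2\pi$ (the lower density bound for isolated singularities of minimizing $\cn$-valued harmonic maps), while monotonicity together with $\Theta_\infty=8s_+^2\pi$ forces $\Theta(a)\leq 8s_+^2\pi$; equality in monotonicity on $(0,\infty)$ makes $W_\infty$ radial about $a$, hence $W_\infty(x)=\Phi(x-a)$, and a second singular point $a'$ would demand $\Phi(x-a)\equiv\Phi(x-a')$, which is impossible. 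Since $W_n(0)=U_n(x_n^1)$ is $\delta$-defective, $W_\infty$ cannot be regular at $0$, so $0\in\Sigma$ and $W_\infty=\Phi$. With $w_n:=D_n^{-1}(x_n^2-x_n^1)\to w_\infty$, $|w_\infty|=1\neq 0$, $C^2_{loc}$ convergence near $w_\infty$ gives $W_n(w_n)\to\Phi(w_\infty)\in\cn$, contradicting $\dist(W_n(w_n),\cn)\geq\delta$. The main obstacle is the rigidity identification of $W_\infty$: genuine minimality (not merely stationarity) of $W_\infty$ in the LdG-to-harmonic-map limit has to be obtained by a $\Gamma$-convergence / Luckhaus interpolation argument, and the lower density bound $\Theta(a)\geq 8s_+^2\pi$ relies on the classification of tangent maps at isolated singularities of minimizing $\cn$-valued harmonic maps on $\BR^3$.
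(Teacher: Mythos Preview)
Your argument for \eqref{smallness1} is the same as the paper's: strong $H^1$ convergence $U_n\to\Phi$ (which is already assumed in \eqref{linfty H1 conv}, so your rederivation via Brezis--Coron--Lieb uniqueness is unnecessary) feeds into the small-energy regularity Lemma~\ref{smallenergy} to give uniform convergence away from the origin.

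For \eqref{smallness2} the overall architecture matches the paper --- contradiction, blow-up at the intermediate scale $D_n$, and a Luckhaus interpolation to obtain strong $H^1_{loc}$ convergence and minimality of the limit harmonic map (this is exactly the paper's Lemma~\ref{strong convergence of Vn}) --- but the endgame is genuinely different. The paper centres at the midpoint, obtains \emph{two} singular points $P_1,P_2$ of the limit $V$, each with quantized density $8s_+^2\pi$ via Lin--Rivi\`ere, and then runs a sliding-ball argument (balls $B_1(Q_R)$ with $Q_R$ on the segment $[P_1,P_2]$) to force $\nabla V\equiv 0$ on $B_1$, a contradiction. You instead centre at $x_n^1$, locate a single singularity of $W_\infty$ at the origin, and use the \emph{rigidity case of the monotonicity formula}: from $\Theta(0)\ge 8s_+^2\pi$ (tangent-cone classification) and the density upper bound $8s_+^2\pi$ inherited from \eqref{Un8pi}, equality $\Theta(0)=\Theta_\infty$ forces $W_\infty$ to be $0$-homogeneous, hence a rotated hedgehog with unique singularity at $0$; the second defective point then lands at a smooth point and $C^2$ convergence finishes. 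Your route avoids the combinatorial ball-slicing and the Lin--Rivi\`ere citation, trading them for the (standard) rigidity in harmonic-map monotonicity; the paper's route stays entirely inside a bounded ball $B_2$ and never invokes a limit at infinity.

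One point to tighten: your sentence ``identifies the tangent map of $W_\infty$ at infinity as $\Phi$: for any fixed $\eta\neq 0$, $W_n(\eta/D_n)=U_n(x_n^1+\eta)\to\Phi(\eta)$'' does not actually prove anything about $W_\infty$ --- it is a statement about $W_n$ at points escaping to infinity, not about blow-downs of the fixed limit map. Fortunately you do not need this: the upper bound $\Theta_\infty\le 8s_+^2\pi$ follows directly from the rescaled analogue of \eqref{Un8pi} and strong convergence, and the lower bound $\Theta_\infty\ge\Theta(0)\ge 8s_+^2\pi$ follows from monotonicity once you know $0$ is singular. You can delete the tangent-map-at-infinity sentence without loss.
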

\begin{proof}
We follow closely the proof of \cite[Proposition3.2]{mp}. First recall that \eqref{linfty H1 conv} gives the strong $H^1$ convergence of $U_n$ to $\Phi(x)$ in $B_1$. Now we fix $\d\in (0,1)$ and first prove \eqref{smallness1}. Define
\beq\label{dndelta}
D_n^\d:=\{x\in\bar{B}_1:\dist(U_n(x),\cn)\geq \d\}
\eeq
It suffices to show for any given $r<1$, the set $D_n^\d\subset B_r$ for every $n$ sufficiently large. Note that according to \eqref{linfty H1 conv}, $(B_{1}\backslash B_{1/2})\cap D_n^\d=\varnothing$ as $n\ri\infty$, so we only need to focus on $(\overline{B}_{1/2}\backslash B_{r})\cap D_n^\d$. Since $\Phi(x)$ is smooth outside the origin, we can find a $r_0<\f{r}{8}$ such that
\beqo
\f{1}{r_0}\int_{B_{r_0}(x)}|\na \Phi(x)|^2\,dx\leq \f12 C_3\quad \forall x\in\overline{B}_{1/2}\backslash B_r
\eeqo
Here $C_3$ is a small number to be determined. The strong convergence of $U_n$ to $\Phi$ in $H_1$ implies that
\beqo
\f{1}{r_0}\int_{B_{r_0}(x)}|\na U_n|^2\,dx\leq  C_3\quad \forall x\in\overline{B}_{1/2}\backslash B_r,\;\forall n\geq N_1
\eeqo
where $N_1$ is a large constant. Then we infer from the Lemma \ref{smallenergy} and Lemma \ref{bulk0} that, if $C_3$ is chosen to be suitably small,
\beqo
r_0^2\sup\limits_{B_{r_0/2}(x)}e_{R_n^{-1}}(U_n)\leq C_4,\quad \forall x\in \overline{B}_{1/2}\backslash B_r
\eeqo
where $C_4$ is another constant independent of $n$. By Arzela-Ascoli lemma the sequence $\{U_n\}$ is compact in $L^\infty(\bar{B}_{1/2}\backslash B_r)$, and therefore $\dist(U_n(x),\cn)\ri 0$ uniformly in $B_1\backslash B_r$. In particular, $D_n^\d\subset B_r$ for sufficiently large $n$.

Next we show that there exists a $C_\d$ such that $\mathrm{diam}(D^\d_n)\leq C_\d R_n^{-1}$ for large enough $n$. First we choose a small constant $\d_0>0$ such that, there exists a smooth orthogonal projection of $\cn_{\d_0}$ onto $\cn$. Here $\cn_{\d_0}\subset \mathcal{Q}_0$ denotes the $\d$-neighborhood of $\cn$ in $\mathcal{Q}_0$. We denote $\BP$ as the orthogonal projection from $\cn_{\d_0}$ onto $\cn$. It suffices to show \eqref{smallness2} for all $\d<\d_0$.

We fix $\d<\d_0$ and argue by contradiction.  Let $d_n:=\mathrm{diam}(D_n^\d)$ and suppose $\mu_n:=d_nR_n\uparrow\infty$. We take $a_n,b_n\in D_n^\d$ such that $|a_n-b_n|=d_n$. By \eqref{smallness1} we have $\max\{|a_n|,|b_n|\}\ri 0$. Define
\beqo
c_n:=\f{a_n+b_n}{2},\quad s_n:=\sup\{|x-c_n|:x\in D_n^\d\}.
\eeqo
One can easily verify that $s_n\in[\f{d_n}{2}, d_n)$ from definitions.

We perform the following rescaling
\beqo
V_n(x):=U_n(d_nx+c_n),\quad x\in B_2.
\eeqo
Note that $V_n$ is well-defined for all large enough $n$ due to the fact that $B_{2d_n}(c_n)\subset B_1$ when $n$ is sufficiently large. By the relationship of $s_n$ and $d_n$, we have
\beqo
V_n(x)\in \mathcal{N}_{\delta}, \quad \forall x\in B_2\setminus \bar{B}_1, \; n \text{ sufficiently large.}
\eeqo
Moreover, $V_n(x)$ minimizes the energy
\beqo
\int_{B_2}\f12 |\na Q|^2+\mu_n^2f_b(Q)\,dx.
\eeqo
By the definition of $V_n,U_n$, \eqref{Un8pi} and the monotonicity formula, for every $x_0\in B_2,\; R\in (0,2-|x_0|)$, we have
\begin{align}
\nonumber &\lim\limits_{n\ri\infty}\f{1}{R}\int_{B_{R(x_0)}} e_{\mu_n^{-1}}(V_n)\,dx\\
\label{Venergy}\leq & \lim\limits_{n\ri\infty}\f{1}{1-|d_nx_0+c_n|} \int_{B_{1-|d_nx_0+c_n|}(d_nx_0+c_n)}  e_{R_n^{-1}}(U_n)\,dx\\
\nonumber \leq &\lim\limits_{n\ri\infty} \f{8s_+^2\pi}{1-|d_nx_0+c_n|}=8s_+^2\pi.
\end{align}

Denote $P_1^n:=\f{a_n-c_n}{d_n}$ and $P_2^n:=\f{b_n-c_n}{d_n}$. Up to a rotation we assume
\beqo
P_1^n=P_1=(\f12,0,0),\quad  P_2^n=P_2=(-\f12,0,0).
\eeqo

It is well known, via Chen-Struwe\cite{cs} and Chen-Lin\cite{cl}, that up to a subsequence, $V_n$ converges weakly in $H^1(B_2,\mathcal{Q}_0)$ and strongly in $L^2(B_2,\mathcal{Q}_0)$ to a weakly harmonic map $V\in H^1(B_2,\cn)$. Moreover, there exists a nonnegative Radon measure $\nu$ on $\Om$ such that
\beqo
e_{\mu_n^{-1}}(V_n)\,dx\ri \f{1}{2}|\na V|^2\,dx+\nu \text{ in }B_2.
\eeqo

We have the following lemma.
\begin{lemma}\label{strong convergence of Vn}
$\nu(B_2)=0$, $V_n\ri V$ strongly in $H^1_{loc}(B_2,\mathcal{Q}_0)$ and $V$ is a minimizing harmonic map.
\end{lemma}

We first admit Lemma \ref{strong convergence of Vn} and proceed with the proof of Proposition \eqref{diam_estimate}. A direct consequence of Lemma \ref{strong convergence of Vn} is
\beq\label{fbVnconverge}
\lim\limits_{n\ri\infty}\int_{B_2}\mu_n^2f_b(V_n)\,dx=0
\eeq

For the limiting map $V$, first we claim that
\beqo
\lim\limits_{R\ri 0}\f{1}{R}\int_{B_R(P_i)}|\na V|^2>0,\quad \text{for }i=1,2.
\eeqo
Otherwise if $\lim\limits_{R\ri 0}\f{1}{R}\int_{B_R(P_i)}\na |V|^2=0$ for $i=1$ or $2$ (we assume $i=1$ without loss of generality), then by strong $H^1$ convergence and \eqref{fbVnconverge} we infer that there exist $R_0>0$ and $N_0$ such that for any $n\geq N_0$,
\beqo
\int_{B_{R_0}(P_1)} e_{\mu_n^{-1}}(V_n)\,dx\leq C
\eeqo
for some suitably small constant $C$. Invoking Lemma \ref{smallenergy}, we conclude that the there exists a constant, still denoted by $C$, such that
\beqo
R_0^2\sup\limits_{B_{R_0/2}(P_1)} e_{\mu_n^{-1}}(V_n)\leq C, \quad \forall n\geq N_0
\eeqo
Again by Arzela-Ascoli lemma we have $V_n$ converges uniformly to $V$ in $B_{R_0/2}(P_1)$, which contradicts with the assumption $\dist(V_n(P_1),\cn)=\d>0$. So we get the claim.

On the other hand, since $V$ is a stationary harmonic map, by the quantization results in \cite[Corollary 1]{lr},
\beqo
\lim\limits_{R\ri 0}\f1R \int_{B_R(P_i)}|\na V|^2\,dx=16s_+^2k_i\pi,\quad i=1,2;\; k_i \text{ is a positive integer}.
\eeqo
Recall that from \eqref{Venergy} we have
\beqo
\f{1}{R}\int_{B_R(P_i)}\f12 |\na V|^2\,dx\leq 8s_+^2\pi.
\eeqo
It follows that $k_1=k_2=1$. And by monotonicity formula we have
\beq\label{energynearPi}
\f{1}{R}\int_{B_R(P_i)} |\na V|^2\,dx\geq 16s_+^2\pi,\quad i=1,2; \;R\in(0,1).
\eeq
For every $R\in(0,1)$, denote $Q_R:=(R-\f12,0,0)$. By \eqref{Venergy} and \eqref{fbVnconverge} we have
\beqo
16s_+^2\pi\geq \int_{B_{1}(Q_R)}|\na V|^2\,dx\geq \left(\int_{B_R(P_1)}+\int_{B_{1-R}(P_2)}\right)|\na V|^2\,dx\geq (R+(1-R))16s_+^2\pi=16s_+^2\pi.
\eeqo
It follows that $|\na V|\equiv 0$ on $B_{1}(Q_R)\backslash(B_R(P_1)\cup B_{1-R}(P_2))$ for every $R\in(0,1)$. Note that
\beqo
B_{1}\backslash \{(x,0,0):-1<x<1\}= \bigcup\limits_{R\in(0,1)} B_{1}(Q_R)\backslash(B_R(P_1)\cup B_{1-R}(P_2)),
\eeqo
we therefore deduce that
\beqo
\int_{B_{1}}|\na V|^2\,dx=0
\eeqo
which clearly contradicts with \eqref{energynearPi}. The proof of \eqref{smallness2} is thus complete.
\end{proof}

\begin{proof}[Proof of Lemma \ref{strong convergence of Vn}]
Fix any radius $\rho_0\in(1,2)$. By Fatou's lemma and Fubini's theorem, there is a radius $\rho\in(\rho_0,2)$ and a subsequence of $\{V_n\}$, which still denoted by $\{V_n\}$, such that
\beq\label{relation of vn and v}
\lim\limits_{n\ri\infty} \int_{\pa B_\rho} |V_n-V|^2 d \mathcal{H}^{2}=:\lim\limits_{n\ri\infty} \e_n=0,\ \int_{\pa B_{\rho}}(e_{\mu_n^{-1}}(V_n)+|\na V|^2)d\mathcal{H}^{2}\leq C(\rho_0)<\infty,\ \forall n\in\mathbb{N}.
\eeq
In order to construct an energy competitor, we need the following  extension Lemma which was first proved by \cite{su} and later by Luckhaus \cite{luckhaus}. Here we present the version in \cite[Lemma 2.2.9]{lw3}.
\begin{lemma}\label{luck}
For $n\geq 2$, suppose $u,v\in H^1(\mathbb{S}^{n-1},\cn)$. Then for $\e\in(0,1)$ there is $w\in H^1(\mathbb{S}^{n-1}\times[1-\e,1], \BR^L)$ such that $w|_{\mathbb{S}^{n-1}\times\{1\}}=u,\; w|_{\mathbb{S}^{n-1}\times\{1-\e\}}=v$,
\beqo
\int_{\mathbb{S}^{n-1}\times[1-\e,1]}|\na w|^2\leq C\e\int_{\mathbb{S}^{n-1}}(|\na_T u|^2+|\na_T v|^2)+C\e^{-1}\int_{\mathbb{S}^{n-1}}|u-v|^2,
\eeqo
and
\beqo
\mathrm{dist}^2(w(x),N)\leq C\e^{1-n}\left(\int_{\mathbb{S}^{n-1}}(|\na_T u|^2+|\na_T v|^2)\right)^{\f12}\left(\int_{\mathbb{S}^{n-1}}|u-v|^2\right)^{\f12}+C\e^{-n}\int_{\mathbb{S}^{n-1}}|u-v|^2
\eeqo
for a.e. $x\in \mathbb{S}^{n-1}\times[1-\e,1]$. Here $\na_T$ is the gradient on $\mathbb{S}^{n-1}$ and $\mathbb{R}^L$ is the space in which the manifold $\mathcal{N}$ is embedded.
\end{lemma}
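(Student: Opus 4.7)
The plan is to implement Luckhaus' original extension construction: decompose the slab $\mathbb{S}^{n-1}\times[1-\e,1]$ into $n$-dimensional prisms of scale $\e$, define $w$ explicitly on their $(n-1)$-skeleton, and fill in each prism by a zero-homogeneous extension from an interior basepoint. The gradient estimate then follows from dimensional scaling of the radial extension, while the distance estimate reduces, after identifying $w$ pointwise with its boundary values, to an interpolation inequality on a low-dimensional skeleton.

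First I would fix a cellular decomposition of $\mathbb{S}^{n-1}$ into $(n-1)$-cells of diameter comparable to $\e$ (for instance by pulling back a Euclidean $\e$-grid via local coordinate charts) and denote by $\mathcal{G}^{n-2}$ its $(n-2)$-skeleton. A Fubini argument over translates of the grid furnishes a specific shift for which
\beqo
\int_{\mathcal{G}^{n-2}}\!\bigl(|\na_T u|^2+|\na_T v|^2+|u-v|^2\bigr)\,d\mathcal{H}^{n-2}\leq \f{C}{\e}\int_{\mathbb{S}^{n-1}}\!\bigl(|\na_T u|^2+|\na_T v|^2+|u-v|^2\bigr).
\eeqo
On the lateral $(n-1)$-skeleton $\mathcal{G}^{n-2}\times[1-\e,1]$ I prescribe $w$ by the linear interpolation $w(x,t):=\f{t-(1-\e)}{\e}u(x)+\f{1-t}{\e}v(x)$, and set $w=u$ on the top face, $w=v$ on the bottom face. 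Then on each $n$-prism $\sigma\times[1-\e,1]$ I pick an interior basepoint $p_\sigma$ (e.g.\ the centroid) and extend $w$ zero-homogeneously, $w(p_\sigma+r\omega):=w(p_\sigma+r_\omega(\omega)\omega)$, where $r_\omega(\omega)$ is the distance from $p_\sigma$ to $\pa(\sigma\times[1-\e,1])$ in direction $\omega$. For this extension, the pointwise bound $|\na w|^2\leq C|x-p_\sigma|^{-2}|\na_T w|^2$ together with radial integration and summation over prisms yields $\int_{\mathbb{S}^{n-1}\times[1-\e,1]}|\na w|^2\leq C\e\int_{\text{lateral}}|\na_T w|^2$. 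Combining this with the elementary skeleton bound $|\na w|^2\leq C(|\na_T u|^2+|\na_T v|^2)+C\e^{-2}|u-v|^2$ and the sliced Fubini estimate above produces the required global gradient bound $C\e\int(|\na u|^2+|\na v|^2)+C\e^{-1}\int|u-v|^2$.

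The main obstacle is the pointwise distance estimate. Because $w$ is zero-homogeneous from $p_\sigma$, for a.e.\ $x$ in a prism there is $y\in\pa(\sigma\times[1-\e,1])$ with $w(x)=w(y)$; on the top/bottom faces $w\in\cn$, while on the lateral piece $w(y,t)$ is a convex combination of $u(y),v(y)\in\cn$, so $\mathrm{dist}(w(y,t),\cn)\leq|u(y)-v(y)|$ and consequently $\mathrm{dist}^2(w(x),\cn)\leq\sup_{y\in\pa\sigma}|u(y)-v(y)|^2$. The problem reduces to controlling $\|u-v\|_{L^\infty(\pa\sigma)}$ by the global $L^2$ quantities on the right-hand side of the lemma. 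For $n\leq 3$ the set $\pa\sigma$ is at most one-dimensional of length $\sim\e$, and the Gagliardo--Nirenberg inequality $\|f\|_{L^\infty}^2\leq C(L^{-1}\|f\|_{L^2}^2+\|f\|_{L^2}\|\na f\|_{L^2})$ applied to $f=u-v$, combined with the Fubini slice bound, yields the claimed exponents $\e^{1-n}$ and $\e^{-n}$. For $n\geq 4$ the embedding $H^1\hookrightarrow L^\infty$ fails on $\pa\sigma$, and one has to iterate the slicing procedure on lower-dimensional faces of $\mathcal{G}^{n-2}$, with basepoints chosen by a further Fubini selection at each stage, until reaching one-dimensional segments where Gagliardo--Nirenberg applies. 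This iterated-slicing argument, producing the precise scaling $\e^{1-n}$ and $\e^{-n}$, is the technical core of Luckhaus' proof and the main difficulty.
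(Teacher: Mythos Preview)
The paper does not prove this lemma; it is quoted verbatim from the literature (attributed to Schoen--Uhlenbeck \cite{su} and Luckhaus \cite{luckhaus}, in the formulation of \cite[Lemma~2.2.9]{lw3}) and used as a black box in the construction of the competitor $W_n$. Your outline is a faithful sketch of Luckhaus' original construction --- the $\e$-scale cellular decomposition, Fubini selection of a good grid, linear interpolation on the lateral skeleton, zero-homogeneous filling of prisms, and the iterated slicing down to one-dimensional faces where Gagliardo--Nirenberg gives the $L^\infty$ bound --- so there is nothing to compare: you have supplied precisely the proof the paper chose to cite rather than reproduce.
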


let $W\in H^1(B_2,\cn)$ satisfying $W=V$ on $B_2\setminus B_{2-\rho_0}$. We define the energy competitor $W_n\in H^1(B_2, \mathcal{Q}_0)$ as
\beq\label{def of Wn}
W_n(x):=\begin{cases}
V_n(x), & x\in L_1:= B_2\setminus B_\rho,\\
\f{|x|-\rho+\sqrt{\e_n}}{\sqrt{\e_n}}V_n(\f{\rho x}{|x|})+ \f{\rho-|x|}{\sqrt{\e_n}}\mathbb{P}(V_n(\f{\rho x}{|x|})), & x\in L_2:=B_{\rho}\setminus B_{\rho-\sqrt{\e_n}},\\
\mathbb{P}(K_n(x)), & x\in L_3:= B_{\rho-\sqrt{\e_n}}\setminus B_{\rho-\sqrt{\e_n}-\e_n^{1/6}},\\
W(\f{\rho}{\rho-\sqrt{\e_n}-\e_n^{1/6}}x), & x\in L_4:=B_{\rho-\sqrt{\e_n}-\e_n^{1/6}}.
\end{cases}
\eeq
Here $K_n(x)$ is the connecting function obtained by applying Lemma \ref{luck} to $\mathbb{P}(V_n|_{\pa B_\rho})$ and $V|_{\pa B_\rho}$. To be more precise, $K_n(x)$ satisfies
\begin{align}
\non &K_n(x)=\mathbb{P}(V_n(\f{\rho x}{\rho-\sqrt{\e_n}})) \text{ on } \pa B_{\rho-\sqrt{\e_n}},\\
\non & K_n(x)=V(\f{\rho x}{\rho-\sqrt{\e_n}-\e_n^{1/6}}) \text{ on } \pa B_{\rho-\sqrt{\e_n}-\e_n^{1/6}},\\
\label{energy of Kn} &\dist(K_n(x),\cn)\leq C\e_n^{1/6},\quad \int_{L^3} |\na K_n(x)|^2 \leq C\e_n^{1/6}.
\end{align}
Here we used
\beqo
\int_{\pa B_\rho} |\mathbb{P}(V_n)-V|^2\leq 2\int_{\pa B_\rho}(|\mathbb{P}(V_n)-V_n|^2+|V_n-V|^2)\leq 4\int_{\pa B_\rho}|V_n-V|^2=4\e_n
\eeqo
and applied Lemma \ref{luck} with $n=3$ and $\e=\e_n^{\f16}$ in order to get \eqref{energy of Kn}.

Since $\e_n\downarrow 0$ we know that $K_n(x)\in \cn_{\d_0}$ when $n$ is large enough. Therefore $\mathbb{P}(K_n)$ is well-defined on $L_3$. Direct calculation implies that
\begin{align*}
&\left|\int_{B_\rho} e_{\mu_n^{-1}}(W_n)\,dx-\int_{B_\rho}\f{1}{2}|\na W|^2\,dx\right|\\
=&\left|\f{\sqrt{\e_n}+\e_n^{1/6}}{\rho} \int_{B_\rho}\f{1}{2}|\na W|^2\,dx\right|+\left|\int_{L_2} e_{\mu_n^{-1}}(W_n)\,dx \right|+ \left|\int_{L_3}\f12|\na W_n|^2\,dx\right| \\
=&:I_n+II_n+III_n.
\end{align*}
For the first term $I_n$ since $\e_n\ri 0$ we have $\lim\limits_{n\ri\infty} I_n=0$. For the second term we calculate using polar coordinates:
\beq\label{estimate of II2}
\begin{split}
&\int_{L^2} e_{\mu_n^{-1}}(W_n) dx\\
=&\int_{\r-\sqrt{\e_n}}^\rho r^2 \int_{\mathbb{S}^2} \left\{\f12|\na W_n(r\theta)|^2+\mu_n^2 f_b(\f{|x|-\rho+\sqrt{\e_n}}{\sqrt{\e_n}}V_n(\f{\rho x}{|x|})+ \f{\rho-|x|}{\sqrt{\e_n}}\mathbb{P}(V_n(\f{\rho x}{|x|})))\right\}\ d\theta\,dr\\
\leq& \int_{\rho-\sqrt{\e_n}}^\rho r^2\int_{\mathbb{S}^2} \left\{\f12\left| \na \left(V_n(\rho\theta)+\f{r-\rho}{\sqrt{\e_n}}\left(V_n(\rho\theta)-\mathbb{P}(V_n(\rho\theta))\right)\right)\right|^2+C\mu_n^2f_b(V_n(\rho\theta))\right\}\,d\theta\,dr \\
\leq &  \int_{\rho-\sqrt{\e_n}}^\rho r^2\int_{\mathbb{S}^2} \left\{(1+C\f{(r-\rho)^2}{\e_n}) |\na_T V_n(\rho\theta)|^2+ \f{|V_n(\rho\theta)-\mathbb{P}(V_n(\rho\theta))|^2}{\e_n} +C\mu_n^2f_b(V_n(\rho\theta))\right\}\,d\theta\,dr\\
\leq &  \int_{\rho-\sqrt{\e_n}}^\rho r^2\int_{\mathbb{S}^2} \left\{ C|\na_T V_n(\rho\theta)|^2+ \f{|V_n(\rho\theta)-V(\rho\theta)|^2}{\e_n} +C\mu_n^2f_b(V_n(\rho\theta))\right\}\,d\theta\,dr\\
\leq & C \int_{\rho-\sqrt{\e_n}}^\rho \f{r^2}{\rho^2} \int_{\pa B_\r} |\na_T V_n(x)^2|+\f{|V_n(x)-V(x)|^2}{\e_n}+\mu_n^2 f_b(V_n(x))\,dx\,dr\\
\leq& C\sqrt{\e_n}.
\end{split}
\eeq
From the second line to the third line we used the fact that $f_b(Q)$ is comparable to $\dist(Q,\cn)^2$ when $Q\in \cn_{\d}$ with small enough $\d$. From the third line to the fourth line we utilized $\f{d}{dr}V_n(\f{\rho x}{|x|})=0$ and $|\na_T \mathbb{P}(V_n(\rho \theta))|^2\leq \mathrm{Lip}(\mathbb{P})^2 |\na_T V_n(\rho \theta))|^2$. The final estimate comes from \eqref{relation of vn and v}. Taking $n\ri \infty$ in \eqref{estimate of II2} we get $\lim\limits_{n\ri\infty} II_n=0$.

Finally, by \eqref{def of Wn} and \eqref{energy of Kn} we have
\beqo
III_ n=\int_{L_3} e_{\mu_n^{-1}}(W_n)\,dx =\int_{L_3} \f12|\na \mathbb{P}(K_n(x))|^2\,dx\leq \mathrm{Lip}(\mathbb{P})^2 \int_{L_3}|\na K_n|^2\,dx\leq C\e_n^{1/6}.
\eeqo
When $n\ri\infty$, we obtain $\lim\limits_{n\ri\infty} III_n=0$. Then by minimality of $V_n$ we conclude that
\beqo
\int_{B_\rho}\f12|\na W|^2\,dx=\lim\limits_{n\ri\infty} \int_{B_\rho} e_{\mu_n^{-1}}(W_n)\,dx\geq \lim\limits_{n\ri\infty} \int_{B_\rho} e_{\mu_n^{-1}}(V_n)\,dx\geq\int_{B_\rho}\f12|\na V|^2\,dx.
\eeqo
Hence $V\in H^1(B_2,\cn)$ is an energy minimizing harmonic map. Moreover, if we take $W\equiv V$ in $B_2$, then the calculation above implies that
\beqo
\int_{B_\rho}|\na V|^2\,dx\geq \lim\limits_{n\ri\infty}\int_{B_\rho}e_{\mu_n^{-1}}(V_n)\,dx,
\eeqo
which further implies the strong $H^1_{loc}$ convergence of $V_n$ to $V$ and $
\nu=0$.

\end{proof}

Now we claim that there is a $N\in\mathbb{N}$ large enough such that for any $n\geq N$, the set $D_n^{\d_0}$ defined in \eqref{dndelta} is not empty, where $\delta_0$ is defined in the proof of Proposition \ref{diam_estimate}. Indeed, suppose the claim is wrong and we can find $n_j\ri\infty$ such that for every $j$,
\beqo
U_{n_j}(x)\in \cn_{\d_0},\quad \forall x\in \bar{B}_1.
\eeqo
Then $\BP(U_{n_j})$ are smooth maps from $\bar{B}_1$ to $\cn$, and therefore $\mathrm{deg}(\BP(U_{n_j}),\pa B_r)=0$ for any $r\in(0,1)$. On the other hand, using \eqref{linfty H1 conv} we know that when $j$ is large enough,
\beqo
\mathrm{deg}(\BP(U_{n_j}),\pa B_r)=1,\quad \forall r\in(\f12,1),
\eeqo
which yields a contradiction. The claim is proved.

Now we can define $a_n\in B_1$ such that
\beq\label{def:an}
a_n \in D_n^{\d_0} \text{ and } \dist(U_n(a_n),\cn)=\sup\limits_{x\in B_1} \dist(U_n(x),\cn).
\eeq
According to Proposition \ref{diam_estimate}, we have $a_n\ri 0$ as $n\ri\infty$ and $D_n^{\d_0}\subset B_{r_n}(a_n)$ for some $r_n\leq C_{\d_0} R_n^{-1}$. When $U_n(\pa B_r)\subset \mathcal{N}_{\d_0}$, we also define the topological degree of $U_n|_{\pa B_r}$ as
\beqo
\mathrm{deg}(U_n, \pa B_r(x))=\mathrm{deg}(\BP(U_n),\pa B_r(x)),
\eeqo
By the condition \eqref{linfty H1 conv}, the smoothness of the map $U_n$ and the smallness of $a_n$, we infer that
\beq\label{topodegree}
\mathrm{deg}(U_n,\pa B_{r}(a_n))=1 \quad \text{for every }r\in[C_{\d_0} R_n^{-1},1-|a_n|].
\eeq

Then we define
\beq\label{def:qnn}
Q_n(x)=U_n(\f{x}{R_n}+a_n)=Q_{\e_n}(\e_nx+r_na_n) \text{ for } |x|\leq \overline{R}_n:=R_n(1-|a_n|).
\eeq

\begin{proposition}\label{oqconverge}
Let $Q_\e$ be a global minimizer of the minimization problem \eqref{Landau-deGennes}--\eqref{bdy-con}. For $\{Q_n\}$ defined as in \eqref{def:qnn} with $\{(r_n,\e_n)\}$ as in \eqref{def:rn}, \eqref{relation rn en} \eqref{outside rn uniform conv} and \eqref{linfty H1 conv},
there exists a subsequence, still denoted by $\{Q_n\}$, such that
$Q_n(x)\ri Q$ in $C^2_{loc}(\BR^3,\mathcal{Q}_0)$, where the limiting map $Q$ satisfies
\begin{enumerate}
\item $Q$ locally minimizes the functional $I(\cdot,\BR^3)$ (see \eqref{def of I}) in the sense of Definition \ref{def local minimizer}.
\item $\dist(Q(x),\cn)\ri 0$ as $|x|\ri \infty$, $\mathrm{deg}_{\infty}(Q)=1$ and $\f{1}{R}I(Q,B_R)\ri 8s_+^2\pi$ as $R\ri\infty$.
\end{enumerate}
\end{proposition}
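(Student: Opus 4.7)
The plan is to combine uniform local bounds on $Q_n$ (via rescaling and elliptic regularity), a routine energy-comparison for local minimality, and the transfer of asymptotic/topological data from the $U_n$ analysis to the blow-up limit.

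First, the change of variable $y = \e_n x + r_n a_n$ shows that $Q_n$ is a local minimizer of $I(\cdot) = \int(\f{1}{2}|\na Q|^2 + f_b(Q))\,dx$ on $B_{\bar R_n}$, with $\bar R_n = R_n(1-|a_n|) \to \infty$ since $|a_n| \to 0$ by \eqref{smallness1}. Rescaling back to $U_n$-coordinates produces the key identity
\beqo
\f{I(Q_n, B_{\bar R_n})}{\bar R_n} = \f{1}{1-|a_n|}\int_{B_{1-|a_n|}(a_n)} \left(\f{1}{2}|\na U_n|^2 + R_n^2 f_b(U_n)\right) dy,
\eeqo
which tends to $8s_+^2\pi$ thanks to the strong $H^1$ convergence $U_n \to \Phi$ in \eqref{linfty H1 conv}, Lemma \ref{bulk0}, and the fact that $|a_n|\to 0$. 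Combined with the monotonicity formula (at $\e=1$), this yields a uniform local energy bound $I(Q_n, B_R)/R \leq C$ for every fixed $R$. Together with the well-known $\e$-independent $L^\infty$ bound on $Q_{\e_n}$ (and hence on $Q_n$) and Schauder theory applied to the rescaled Euler--Lagrange equation, this yields uniform $C^{k,\al}_{loc}$ bounds, so a diagonal subsequence produces $Q_n \to Q$ in $C^2_{loc}(\BR^3, \mathcal{Q}_0)$.

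The local minimality of $Q$ then follows by a standard pasting argument: for $D \Subset \BR^3$ and $V$ with $Q-V \in H^1_0(D)$, I extend $V$ by $Q$ outside $D$ and construct a competitor $V_n$ for $Q_n$ on a slightly larger $D' \Supset D$ by linearly interpolating between $V|_{\pa D}$ and $Q_n|_{\pa D'}$ in a thin annulus; the uniform $C^2_{loc}$ convergence makes the annular error vanish, and passing $I(Q_n, D') \leq I(V_n, D')$ to the limit gives $I(Q, D) \leq I(V, D)$. The asymptotic behavior $\dist(Q(x), \cn) \to 0$ as $|x|\to\infty$ follows by rescaling \eqref{smallness2} from Proposition \ref{diam_estimate}, which becomes $\{\dist(Q_n, \cn) \geq \d\} \subset B_{C_\d}(0)$ for large $n$ and therefore persists in the $C^2_{loc}$ limit. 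Similarly, rescaling \eqref{topodegree} yields $\mathrm{deg}(Q_n, \pa B_\rho) = 1$ for $\rho \in [C_{\d_0}, \bar R_n]$, and stability of the degree under uniform convergence delivers $\mathrm{deg}_\infty(Q) = 1$.

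The most delicate part is the asymptotic density $\f{1}{R}I(Q, B_R) \to 8s_+^2\pi$. The upper bound comes from monotonicity of $R \mapsto I(Q_n, B_R)/R$ at $\e = 1$: for $R \leq \bar R_n$, $I(Q_n, B_R)/R \leq I(Q_n, B_{\bar R_n})/\bar R_n \to 8s_+^2\pi$, which together with $C^2_{loc}$ convergence on fixed $B_R$ gives $I(Q, B_R)/R \leq 8s_+^2\pi$. For the matching lower bound I use the smooth retraction $\BP : \cn_{\d_0} \to \cn$: for any $\d < \d_0$ and $\rho \geq C_\d$, the already-established $Q|_{\pa B_\rho} \in \cn_\d$ together with $\mathrm{deg}(\BP(Q), \pa B_\rho) = 1$ and the classical minimum-energy estimate $\int_{\BS}\f{1}{2}|\na u|^2 \geq 8s_+^2\pi$ for degree-$1$ maps $\BS \to \cn$ yields $\int_{\pa B_\rho}\f{1}{2}|\na_T \BP(Q)|^2\,d\sigma \geq 8s_+^2\pi$; the bound $|\na \BP(Q)| \leq (1+C\d)|\na Q|$ valid near $\cn$ then produces $\int_{\pa B_\rho}\f{1}{2}|\na_T Q|^2 \geq (1-C\d)\,8s_+^2\pi$, and integrating in $\rho$ from $C_\d$ to $R$ gives $I(Q,B_R)/R \geq (1-C\d)(1 - C_\d/R)\,8s_+^2\pi$; sending $R \to \infty$ and then $\d \to 0$ closes the proof. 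The main obstacle is precisely this lower bound, which must be extracted from the topological behaviour of $Q$ at infinity rather than from $C^2_{loc}$ convergence of $Q_n$ on fixed balls.
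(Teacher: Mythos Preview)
Your proof is correct and follows essentially the same route as the paper's: elliptic regularity on the rescaled Euler--Lagrange equation for $C^2_{loc}$ compactness, transfer of degree and distance-to-$\cn$ information from $U_n$ via Proposition~\ref{diam_estimate}, the monotonicity upper bound, and the projection/degree-one energy lower bound on spheres. Your treatment is more explicit than the paper's in two places---the pasting argument for local minimality (which the paper simply asserts is ``inherited''), and the slice-by-slice integration of the spherical lower bound (where the paper compresses the same idea into one inequality $\tfrac{1}{R}I(Q,B_R)\geq (1-\e)\tfrac{1}{R}I(\mathbb{P}(Q),B_R)$)---but these are elaborations of the same argument rather than a different approach.
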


\begin{proof}
Note that $Q_n$ satisfies the Euler-Lagrange equation
\beqo
\Delta Q_n=-a^2Q_n-b^2[Q_n^2-\f13\mathrm{tr}(Q_n)^2\,\mathrm{Id}]+c^2\mathrm{tr}(Q_n)^2Q_n\; \text{ in }B_{\overline{R}_n}.
\eeqo
Also \eqref{Un8pi} implies that
\beq\label{oqn8pi}
\lim\limits_{n\ri \infty} \f{1}{\overline{R}_n} I(Q_n,B_{\overline{R}_n})\leq 8s_+^2\pi.
\eeq
Using standard elliptic regularity theory, we can extract a subsequence, still denoted by $Q_n$, that converges to $Q$ in $C^2_{loc}(\BR^3,\mathcal{Q}_0)$. Here $Q$ solves the same equation as $Q_n$ and also inherits the local minimality from $Q_n$.

By the definition of $Q_n$ and \eqref{topodegree}, we have
\beqo
\mathrm{deg}(Q,\pa B_{r_\d})=\lim\limits_{n\ri\infty}\mathrm{deg}(Q_n,\pa B_{r_\d})=1, \quad \forall r_\d>C_\d.
\eeqo
Hence
\beqo
\mathrm{deg}_\infty(Q)=\lim\limits_{r\ri \infty} \mathrm{deg}(Q,\pa B_r)=1.
\eeqo
Similarly, by \eqref{smallness1} and \eqref{smallness2} we deduce that $\dist(Q(x),\cn)\ri 0$ as $|x|\ri \infty$. It only remains to prove the energy estimate $\f{1}{R}I(Q,B_R)\ri 8s_+^2\pi$ as $R\ri\infty$. On the one hand, by the monotonicity formula and \eqref{oqn8pi} we have
\beqo
\f{1}{R} I(Q,B_R)=\lim\limits_{n\ri\infty} \f{1}{R} I(Q_n,B_R)\leq 8s_+^2\pi.
\eeqo
On the other hand, $\dist(Q(x), \mathcal{N})\ri 0$ as $|x|\ri\infty$ implies that for any $\e>0$ there exists a $R_\e$ such that for all $R>R_\e$,
\beqo
\f1R I(Q, B_R)\geq (1-\e)\f{1}{R}I(\mathbb{P}(Q), B_R).
\eeqo
We recall the well-known fact (see e.g. \cite[Section VII]{bcl}) that for $g:\BS\ri\BS$ with $\deg(g)=1$, it holds that $\int_{\BS}\f12 |\na g|^2\geq 4\pi$. Due to $\deg_\infty(Q)=1$, we get
\beqo
\lim\limits_{R\ri\infty}\f1R I(Q, B_R)\geq (1-\e) 8s_+^2\pi.
\eeqo
Take $\e\ri 0$ and we complete the proof of Proposition \ref{oqconverge}.
\end{proof}

\section{Behavior of the limiting map $Q$ at infinity}\label{tangentmap at infty}

In order to understand the limiting map $Q$ in Proposition \ref{oqconverge}, we study its tangent map at infinity. A tangent map for $Q$ is a map $\Psi: \BR^3\ri \mathcal{Q}_0$ obtained as a weak $H^1_{loc}(\BR^3,\mathcal{Q}_0)$ limit of $Q_{R_n}(x):=Q(R_nx)$ for some sequence $R_n\ri \infty$. Let $T_\infty$ denote the set of all possible tangent maps of $Q$ at infinity. $T_\infty$ can be characterized by the following theorem.

\begin{thm}\label{tangentmap}
Let $Q$ be the map defined in Proposition \ref{oqconverge}, then $T_\infty(Q)$ is not empty. Let $\Psi\in T_\infty(Q)$ and assume $Q_{R_n}(x)\ri \Psi$ weakly in $H^1_{loc}(\BR^3,\mathcal{Q}_0)$. Then $Q_{R_n}(x)\ri \Psi$ strongly in $H^1_{loc}(\BR^3)$ and
\beqo
e_{R_n^{-1}}(Q_{R_n})\,dx\ri \f{1}{2}|\na \Psi|^2\,dx
\eeqo
as convergence of Radon measures. Moreover, there exists $T\in O(3)$ such that
\beq\label{psi}
\Psi(x)=s_+(n(x)\otimes n(x)-\f13\mathrm{Id}), \quad n(x)=T(\f{x}{|x|}).
\eeq
\end{thm}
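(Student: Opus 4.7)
The plan is to adapt the blow-up compactness/classification scheme of Section \ref{convergence of blow-up maps} to the blow-down sequence $Q_{R_n}(x):=Q(R_nx)$, exploiting that the limit map $Q$ from Proposition \ref{oqconverge} is itself a local minimizer of $I(\cdot,\BR^3)$. Existence of a weak limit $\Psi$ is immediate from the monotonicity formula applied to $Q$: combined with $\lim_{R\to\infty}R^{-1}I(Q,B_R)=8s_+^2\pi$ it yields $R^{-1}I(Q,B_R)\leq 8s_+^2\pi$ for every $R>0$, so
\beqo
\int_{B_S}\f12|\na Q_{R_n}|^2+R_n^2 f_b(Q_{R_n})\,dx=\f{1}{R_n}\int_{B_{SR_n}}e(Q)\,dx\leq 8s_+^2\pi\,S
\eeqo
for every $S$, giving $H^1_{loc}(\BR^3)$ boundedness. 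Integrating the monotonicity identity in $R$ further forces $R^{-1}\int_{B_R}f_b(Q)\,dx\to 0$, so $\int_{B_S}R_n^2 f_b(Q_{R_n})\,dx\to 0$ and any weak limit $\Psi$ takes values in $\cn$ almost everywhere.

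The main analytic step is to rule out defect measure and obtain strong $H^1_{loc}$ convergence, which I would do by repeating the Luckhaus-type construction from Lemma \ref{strong convergence of Vn}. Choose a good radius $\r$ along which $\tau_n:=\int_{\pa B_\r}|Q_{R_n}-\Psi|^2\,d\mathcal{H}^2\to 0$ and the boundary energies are uniformly bounded, then build the four-layer competitor $W_n$ as in \eqref{def of Wn} (with $\e_n$ there replaced by $\tau_n$, using the smooth nearest-point projection $\BP:\cn_{\d_0}\to\cn$). The estimates \eqref{estimate of II2} and \eqref{energy of Kn} carry over to give $\lim_n\int_{B_\r}e_{R_n^{-1}}(W_n)\,dx=\int_{B_\r}\f12|\na W|^2\,dx$ for any $W\in H^1(B_\r,\cn)$ with $W=\Psi$ near $\pa B_\r$. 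Local minimality of $Q_{R_n}$ combined with weak lower semicontinuity then simultaneously delivers strong convergence $Q_{R_n}\to\Psi$ in $H^1_{loc}(\BR^3)$, the Radon measure convergence $e_{R_n^{-1}}(Q_{R_n})\,dx\to\f12|\na\Psi|^2\,dx$, and the fact that $\Psi$ is a minimizing harmonic map into $\cn$ on every ball.

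Once strong convergence is in hand, the shape of $\Psi$ is pinned down by comparing monotone densities. Since $\Psi$ is $\cn$-valued and energy-minimizing, $R\mapsto R^{-1}\int_{B_R}\f12|\na\Psi|^2\,dx$ is non-decreasing, and strong convergence identifies it, for every $R>0$, with $\lim_n(RR_n)^{-1}\int_{B_{RR_n}}e(Q)\,dx=8s_+^2\pi$, a constant. Equality in the harmonic map monotonicity formula forces $\pa_r\Psi\equiv 0$, so $\Psi$ is $0$-homogeneous and its restriction $\psi$ to $\BS$ is a smooth minimizing harmonic map $\BS\to\cn$. The degree $\deg_\infty Q=1$ from Proposition \ref{oqconverge} passes to $\psi$ by small-energy regularity (Lemma \ref{smallenergy}) and uniform $C^2$ convergence away from the origin; since $\BS$ is simply connected, $\psi$ lifts to a smooth $n\in C^\infty(\BS,\BS)$ with $\Psi=s_+(n\otimes n-\f13\mathrm{Id})$, and because $|\na_T\Psi|^2=2s_+^2|\na_T n|^2$ this lift is an energy-minimizing $\BS$-valued harmonic map of degree $\pm 1$. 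The Brezis--Coron--Lieb classification \cite{bcl} then forces $n=T(x/|x|)$ for some $T\in O(3)$, establishing \eqref{psi}.

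The principal obstacle is the defect-measure-free strong convergence. The competitor construction transplanted from Lemma \ref{strong convergence of Vn} has to respect the $\mathcal{Q}_0$ structure and interact correctly with the nonlinear projection $\BP$, and the bulk penalisation $R_n^2 f_b$ must be controlled uniformly across the thin interpolation annulus. This is exactly the point where \cite{mp} invoked the quantitative defect-measure estimates from \cite{lw1,lw2}; the route taken here of using minimality plus Luckhaus directly, mirroring Lemma \ref{strong convergence of Vn}, is what allows the tensor case to proceed without those tools.
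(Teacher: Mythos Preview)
Your proposal is correct and follows essentially the same approach as the paper: energy bounds from the monotonicity formula give weak compactness, the Luckhaus competitor construction of Lemma \ref{strong convergence of Vn} upgrades this to strong $H^1_{loc}$ convergence and shows $\Psi$ is minimizing, and Brezis--Coron--Lieb classifies the limit. The only cosmetic difference is the order in which homogeneity is obtained: the paper first deduces $\partial_r\Psi\equiv 0$ directly from the monotonicity identity for $Q$ (the finiteness of $\int_{\BR^3}|x|^{-1}|\partial_r Q|^2\,dx$ plus weak lower semicontinuity), and only then invokes the Luckhaus argument, whereas you first obtain strong convergence and then read off homogeneity from the constancy of the density of $\Psi$; both routes are standard and equivalent.
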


\begin{proof}
Fix a sequence $R_n\uparrow\infty$. For any $R>0$,  by Proposition \ref{oqconverge} we have
\beqo
\lim\limits_{R_n\ri\infty} \f{1}{R}\int_{B_R} e_{R_n^{-1}}(Q_{R_n})\,dx=  \lim\limits_{R_n\ri\infty} \f{1}{RR_n} I(Q,B_{RR_n})=8s_+^2\pi.
\eeqo

Thus $Q_{R_n}$ is bounded in $H^1_{loc}(\BR^3)$ and up to a subsequence, $Q_{R_n}\ri \Psi$ weakly in $H^1_{loc}(\BR^3)$ and strongly in $L^2_{loc}(\BR^3)$. Since for any $R$, $\lim\limits_{n\ri\infty}\int_{B_R} f_b(Q_{R_n})\,dx=0$, using Fatou's lemma we obtain $\Psi(x)$ take values in $\cn$. Also, $Q$ satisfies the monotonicity formula \eqref{monotonicityformula} because $Q$ locally minimizes the functional $I(Q,\BR^3)$. Then \eqref{monotonicityformula} and $\f1R I(Q, B_R)\ri 8s_+^2\pi$ imply that
\beqo
\lim\limits_{R\ri\infty}\int_{\BR^3\setminus B_R} \f{1}{r}\left|\f{\pa Q(x)}{\pa r} \right|^2\,dx=0.
\eeqo
It follows that for any $0<\rho<R<\infty$,
\beqo
\begin{split}
\int_{B_{R}\setminus B_\r } \f{1}{r}  \left|\f{\pa \Psi(x)}{\pa r} \right|^2\,dx \leq & \lim\limits_{n\ri\infty} \int_{B_{R}\setminus B_\r } \f{1}{r}  \left|\f{\pa Q_{R_n}(x)}{\pa r} \right|^2\,dx\\
=& \lim\limits_{n\ri\infty} \int_{B_{RR_n}\setminus B_{\r R_n} } \f{1}{r}  \left|\f{\pa Q(x)}{\pa r} \right|^2\,dx=0.
\end{split}
\eeqo
This implies that $\Psi(x)=\Psi(\f{x}{|x|})$ for $x\neq 0$. Also the topological degree of $\Psi$ on $\mathbb{S}^2$ is 1 since $\mathrm{deg}_\infty(Q)=1$. Moreover, $\dist(Q(x),\cn)\ri 0$ as $|x|\ri\infty$ implies that
\beqo
\lim\limits_{n\ri\infty}\sup\limits_{x\in B_R\setminus B_{R/2}} \dist(Q_{R_n}(x),\cn)=0
\eeqo

All these properties above enable us to exploit the same argument, as in the proof of Lemma \ref{strong convergence of Vn}, to obtain that $Q_{R_n}\ri \Psi$ strongly in $H^1_{loc}(\BR^3)$ where $\Psi\in H^1_{loc}(\BR^3, \cn)$ is a homogeneous energy minimizing harmonic map with degree $1$.

Now by the classical result of Brezis-Coron-Lieb \cite[Theorem 7.3]{bcl} \footnote{\cite[Theorem 7.3]{bcl} is proved for $\BS$-valued maps, however it also holds for the case of $\BR\mathbb{P}^2$-valued maps, see the discussion in \cite[Section VIII-B-c]{bcl}.}, $\Psi=s_+(n(x)\otimes n(x)-\f13\mathrm{Id})$ for $n(x)=T(\f{x}{|x|})$, for some $T\in O(3)$.

\end{proof}

For the limiting map $Q$ in Proposition \ref{oqconverge}, let's define $Q^\sharp:=\BP(Q)$ as the orthogonal projection of $Q$ onto $\cn$, i.e.
\beqo
|Q(x)-Q^\sharp(x)|=\dist(Q(x),\cn)
\eeqo
By Proposition \ref{oqconverge} we know that $Q(x)$ will stay in a small neighborhood of $\cn$ when $|x|$ is sufficiently large. Consequently, $Q^\sharp(x)$ is well-defined for large $|x|$. We also denote
\beqo
D(x):=Q(x)-Q^\sharp(x).
\eeqo

Then we have the following two lemmas, which mostly rely on the estimates in \cite{nz}.

\begin{lemma}\label{estimates_at_infty_1}
For any positive integer $k$, there exists a positive constant $C_k$ such that
\beqo
|\na^k Q|\leq \f{C_k}{|x|^k}, \quad \text{for all }x\in \BR^3.
\eeqo
\end{lemma}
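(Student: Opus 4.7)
The plan is to obtain the decay $|\na^k Q(x)|\lesssim |x|^{-k}$ via a rescaling argument which, at every large $|x_0|$, reduces the claim to a uniform interior $C^k$-estimate for a Landau--de Gennes minimizer with a small parameter whose values are close to $\cn$. The analytic machinery for such an estimate is precisely what was developed in \cite{nz}, which is why that reference is invoked in the statement.

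\emph{Step 1: Rescaling.} For $|x_0|$ large, set $\rho:=|x_0|/4$ and define
$$\tilde Q(y):=Q(x_0+\rho y),\qquad y\in B_2.$$
Since $Q$ is a local minimizer of $I(\cdot,\BR^3)$, $\tilde Q$ is a local minimizer on $B_2$ of $\int_{B_2}\bigl(\tfrac12|\na Q|^2+\rho^2 f_b(Q)\bigr)\,dy$, i.e.\ a Landau--de Gennes minimizer with small parameter $\rho^{-1}$.

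\emph{Step 2: Uniform proximity to $\cn$.} Theorem \ref{tangentmap} gives strong $H^1_{loc}(\BR^3)$ convergence $Q(R_n\cdot)\to\Psi$ along every sequence $R_n\uparrow\infty$, with $\Psi$ smooth on $\BR^3\setminus\{0\}$ and valued in $\cn$. Combined with the small-energy regularity (Lemma \ref{smallenergy}) applied on any annulus bounded away from $0$, the convergence is in fact uniform on compact subsets of $\BR^3\setminus\{0\}$. Since the ball $x_0+\rho B_{3/2}$ lies entirely in a cone $\{|x|\ge |x_0|/8\}$ away from the origin of $\Psi$, we obtain
$$\sup_{y\in B_{3/2}}\dist\!\bigl(\tilde Q(y),\cn\bigr)\longrightarrow 0\quad\text{as }|x_0|\to\infty.$$
Moreover the monotonicity formula (Lemma \ref{monotonicitylemma}) and the normalization $R^{-1}I(Q,B_R)\to 8s_+^2\pi$ from Proposition \ref{oqconverge} yield a uniform energy bound $\int_{B_2}e_{\rho^{-1}}(\tilde Q)\,dy\le C$.

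\emph{Step 3: Uniform $C^k$-estimate via \cite{nz}.} Granted the uniform smallness of $\dist(\tilde Q,\cn)$ on $B_{3/2}$ together with the above energy bound, we invoke the interior regularity estimates of \cite{nz}. In that setting one decomposes $\tilde Q=\BP(\tilde Q)+\bigl(\tilde Q-\BP(\tilde Q)\bigr)$; the tangential component obeys an approximate harmonic-map system that is independent of $\rho$ and inherits smoothness from boundary traces close to the smooth limit $\Psi$, while the normal deviation is controlled in every $C^k$ norm by the nondegenerate Hessian of $f_b$ along normal directions to $\cn$. The result is
$$\sup_{B_1}|\na^k \tilde Q|\le C_k,$$
with $C_k$ independent of $\rho$ (and of $x_0$). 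Unscaling gives $|\na^k Q(x_0)|\le C_k\rho^{-k}=C_k'|x_0|^{-k}$. For $|x|\le 1$ the map $Q$ is smooth by Proposition \ref{oqconverge} and elliptic bootstrap, so $|\na^k Q|$ is bounded on $\overline{B}_1$ and this bounded contribution is absorbed into $C_k$.

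\emph{Main obstacle.} The delicate point is the uniformity in $\rho$ of the $C^k$-estimate in Step 3. Naively differentiating the Euler--Lagrange equation $\D\tilde Q=\rho^2\nabla f_b(\tilde Q)$ loses powers of $\rho$ at each step, so one cannot simply apply standard elliptic regularity. The key is the structural decomposition used in \cite{nz}: the projection $\BP(\tilde Q)$ satisfies, up to lower-order normal corrections, a $\rho$-independent harmonic-map equation, while the normal part is $O(\rho^{-2})$-small in every $C^k$ norm by the uniform positivity of $D^2 f_b|_{N\cn}$. Assembling these two pieces uniformly in $\rho$ is what makes the estimate work, and this is the content I would cite from \cite{nz} rather than reprove.
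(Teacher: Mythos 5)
Your proposal is correct and follows essentially the same route as the paper: rescale to unit scale around a far-away point, observe that the rescaled map is a Landau--de Gennes minimizer with small parameter whose values are uniformly close to $\cn$, and then invoke the regularity machinery of \cite{mz,nz} to get interior $C^k$ control, which unscales to the stated decay. The only organizational difference is that the paper runs this as a compactness/contradiction argument along a hypothetical bad sequence $x_n$ --- which lets it use \cite{nz} purely as a convergence statement ($Q_n\to\Psi$ in $C^{k+1}$ on an annulus, so $|\na^k Q_n|$ is bounded at $x_n/R_n$) --- whereas you assert the quantitative, $\rho$-uniform $C^k$ bound directly; the uniformity you defer to \cite{nz} in your Step 3 is exactly what the paper's contradiction argument extracts, so this is a presentational rather than a mathematical difference.
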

\begin{proof}
We argue by contradiction. Assume the statement is false, then there would be an integer $k$ and a sequence of points $x_n$ such that
\begin{align*}
&R_n:=|x_n|\ri \infty\quad \text{as }n\ri\infty\\
&R_n^k|\na^k Q|\ri\infty\quad \text{as }n\ri\infty.
\end{align*}
For each $n$, we consider $Q_{n}=Q(R_nx)$ as a local minimizer in the sense of Definition \ref{def local minimizer} of the following functional
\beqo
\int_{B_2} \f12|\na Q|^2+R_n^2 f_b(Q) \,dx
\eeqo
in the ball $B_2(0)$. Thanks to Proposition \ref{oqconverge} and Theorem \ref{tangentmap}, we have that up to a subsequence,
\begin{align*}
& \f{x_n}{R_n}\ri \overline{x}\text{ for some }\overline{x}\in \pa B_1,\\
Q_{n}\ri \Psi\text{ strongly in } &H^1(B_2)\text{ for }\Psi=s_+((T\f{x}{|x|})\otimes (T\f{x}{|x|})-\f13\mathrm{Id}),\;\;T\in O(3).
\end{align*}
The strong $H^1$ convergence implies, as in \cite[Proposition 4]{mz} the uniform convergence of $f_b(Q_n)$ to 0, which allows to use \cite[Lemma 6.7]{mz} to get a uniform gradient bound on $Q_n$ which is updated to convergence in the interior in arbitrarily high norms in \cite[Theorem 1]{nz}. Thus we have
\beqo
Q_n\ri\Psi\text{ in } C^{k+1}(B_{3/2}\backslash B_{1/2}).
\eeqo
Then we can derive
\beqo
\infty=\lim\limits_{n\ri\infty} R_n^k|\na^k Q(x_n)|=\lim\limits_{n\ri\infty}|\na^k Q_n(\f{x_n}{R_n})|=|\na^k\Psi(\overline{x})|<\infty,
\eeqo
which yields a contradiction. The proof is complete.
\end{proof}

\begin{rmk}\label{ck_convergence}
As a consequence of Lemma \ref{estimates_at_infty_1}, we can improve the strong $H^1$ convergence in Theorem \ref{tangentmap} to $C_{loc}^k$ convergence, i.e. assume $\Psi\in T_\infty(Q)$ and $Q_{R_n}\ri \Psi$ weakly in $H_{loc}^1(\BR^3,\mathcal{Q}_0)$, then $Q_{R_n}\ri\Psi$ strongly in $C^k(K,\mathcal{Q}_0)$ for any integer $k$ and compact set $K\subset\BR^3\backslash\{0\}$.
\end{rmk}

\begin{lemma}\label{estimates_at_infty_2}
There exist positive constants $R_0$ and $C$ such that for any $|x|>R_0$,
\begin{align}
\label{gradientoqsharp} |\na Q^\sharp(x)|\leq\f{C}{|x|}&, \;\; |\na (Q^\sharp)^{-1}(x)|\leq \f{C}{|x|},\\
\label{estimateD}  |D(x)|\leq \f{C}{|x|^2}&,\;\; |\na D(x)|\leq \f{C}{|x|^3}.
\end{align}
\end{lemma}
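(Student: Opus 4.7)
The four estimates will be derived in sequence, with Lemma \ref{estimates_at_infty_1} (the polynomial bounds $|\na^k Q|\leq C_k/|x|^k$) as the main input, together with the smoothness of the nearest-point projection $\BP$ onto $\cn$ and the nondegeneracy of $f_b$ transverse to $\cn$. The estimates in \eqref{gradientoqsharp} are immediate: for $|x|>R_0$ large enough Proposition \ref{oqconverge} ensures $Q(x)\in \cn_{\d_0}$, so $Q^\sharp=\BP\circ Q$ with $\BP$ smooth, and the chain rule combined with Lemma \ref{estimates_at_infty_1} gives $|\na Q^\sharp(x)|\leq \mathrm{Lip}(\BP)\,|\na Q(x)|\leq C/|x|$. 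The ``inverse'' map $(Q^\sharp)^{-1}$, interpreted as the smooth local section of $\cn$ recovering the underlying director up to sign, is smooth on $\cn$, so the same chain-rule estimate yields $|\na (Q^\sharp)^{-1}(x)|\leq C/|x|$.

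The core of the proof is the pointwise decay $|D|\leq C/|x|^2$ in \eqref{estimateD}. Writing $Q = Q^\sharp + D$ and plugging into the Euler-Lagrange equation \eqref{eleq} (with $\e=1$), using that $\na_Q f_b$ vanishes on $\cn$, Taylor expansion at $Q^\sharp$ yields
$$\D D - L(Q^\sharp)\,D = -\D Q^\sharp + R(Q^\sharp,D),$$
where $L(Q^\sharp):=\na^2_Q f_b(Q^\sharp)|_{(T_{Q^\sharp}\cn)^\perp}$ is uniformly positive definite (by strict convexity of $f_b$ in directions normal to $\cn$), and the remainder obeys $|R(Q^\sharp,D)|\leq C|D|^2$. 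The source satisfies $|\D Q^\sharp|\leq C|\na Q|^2\leq C/|x|^2$ thanks to Lemma \ref{estimates_at_infty_1} and the fact that $\BP$ has bounded second derivatives on $\cn_{\d_0}$. Since the closest-point projection forces $D(x)\in(T_{Q^\sharp(x)}\cn)^\perp$, the coercivity of $L$ matches the source decay, and one concludes either by a comparison argument using the barrier $w(x)=M|x|^{-2}$ for $M$ large (checking it is a supersolution to the linearized equation on $\{|x|>R_0\}$ and dominates $|D|$ on $\{|x|=R_0\}$), or equivalently by a blow-up contradiction: the rescaling $D_R(y):=R^2 D(Ry)$ on $B_2\setminus B_{1/2}$ satisfies an equation with a coercive zero-order term and a uniformly bounded source, so its $L^\infty$ norm cannot blow up as $R\uparrow\infty$.

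With $|D|\leq C/|x|^2$ in hand, the final bound $|\na D|\leq C/|x|^3$ follows from standard interior Schauder estimates applied to the same equation on $B_{|x|/2}(x)$: the rescaled function $\tl D(y):=|x|^2 D(x+|x|y/2)$ solves an elliptic system on $B_1$ with uniformly bounded coefficients and source, and is itself uniformly bounded by Step~2, so $\|\tl D\|_{C^1(B_{1/2})}\leq C$, which unwinds to $|\na D(x)|\leq C/|x|^3$. The main obstacle in this scheme is Step~2: one has to verify that $D$ genuinely takes values in the normal bundle to $\cn$, that the strict convexity of $f_b$ transverse to $\cn$ furnishes a positive eigenvalue of the linearized operator uniformly in the base point $Q^\sharp$, and that the quadratic remainder $R(Q^\sharp,D)$ does not destroy either the barrier or the blow-up argument. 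The quantitative decay pattern is parallel to that of \cite{nz} in the vanishing-elasticity regime, with $1/|x|$ here playing the role of $\e$ there, and we adapt those estimates to the present limiting configuration.
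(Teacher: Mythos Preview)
Your overall strategy---linearize the Euler--Lagrange equation around $Q^\sharp$ and use the transverse coercivity of $\nabla^2 f_b$---is sound and genuinely different from the paper's. The paper works instead with the algebraic quantity $Y=Q^2-\tfrac{s_+}{3}Q-\tfrac{2s_+^2}{9}\mathrm{Id}$, which vanishes on $\cn$ and satisfies $|Y|\sim|D|$; it imports the bound $\|R^2\,Y(R\,\cdot\,)\|_{C^1(B_{3/2}\setminus B_{1/2})}\leq C$ directly from \cite[Proposition~4]{nz}, and then extracts $|\nabla D|$ by an explicit computation of $\nabla Y$ in an eigenframe of $Q$. Your PDE route is more self-contained; the paper's is shorter because it leverages \cite{nz} as a black box. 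One minor correction: $(Q^\sharp)^{-1}$ here means the matrix inverse (cf.\ \eqref{ProjEqn::Est2+}--\eqref{ProjEqn::Est3+}), not a section recovering the director; since $Q^\sharp\in\cn$ has nonzero eigenvalues $\tfrac{2}{3}s_+,-\tfrac{1}{3}s_+,-\tfrac{1}{3}s_+$, your chain-rule bound still applies to the correct object.

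There is, however, a genuine gap in your Schauder step for $|\nabla D|$. The zero-order operator $L(Q^\sharp)$ has size $O(1)$, not $O(1/|x|^2)$, so after rescaling to $\tilde D(y)=|x|^2 D(x+\tfrac{|x|}{2}y)$ the equation becomes $\Delta_y\tilde D-\tfrac{|x|^2}{4}L\tilde D=-\tfrac{|x|^4}{4}\Delta Q^\sharp+\cdots$, with both coefficient and source of order $|x|^2$; standard interior estimates then recover only the a priori bound $|\nabla D|\leq C/|x|$. (The same scaling issue is present in your blow-up sketch for $|D|$: the source for $D_R(y)=R^2D(Ry)$ is $O(R^2)$, not $O(1)$. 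What actually works there is the direct pointwise balance $\mu|D|\leq |LD|=|\Delta Q+O(|D|^2)|\leq C/|x|^2+C\delta_0|D|$, no rescaling needed.) The repair for $|\nabla D|$ is not Schauder but a repetition of this pointwise coercivity argument one derivative up: differentiate $LD=\Delta Q+O(|D|^2)$; the tangential part of $\nabla D$ is already $O(|D|\,|\nabla Q^\sharp|)=O(1/|x|^3)$ by differentiating the constraint $D\perp T_{Q^\sharp}\cn$, and the normal part is controlled via $|L(\nabla D)^\perp|\geq\mu|(\nabla D)^\perp|$ together with $|\nabla\Delta Q|\leq C/|x|^3$ from Lemma~\ref{estimates_at_infty_1}. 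The paper sidesteps all of this by the algebraic relation $Y=s_+(2e_1\otimes e_1-\mathrm{Id})D+D^2$, from which $|\nabla Y|^2=s_+^2|\nabla D|^2+(\text{lower order})$ and the $C^1$ bound on $R^2Y$ transfers directly.
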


\begin{proof}
The proof follows the same strategy of the proof of Lemma \ref{estimates_at_infty_1}. The key is to show for any sequence $R_n\ri\infty$ such that $Q_{R_n}\ri \Psi$ in $H^1(B_2)$, the following estimates holds in $B_{3/2}\backslash B_{1/2}$:
\begin{align}
\label{rescale_gradientoqsharp} |\na Q_{R_n}^\sharp(x)|\leq C&, \;\; |\na (Q_{R_n}^\sharp)^{-1}(x)|\leq C,\\
\label{rescale_estimateD}  |D_{R_n}(x)|\leq \f{C}{|R_n|^2}&,\;\; |\na D_{R_n}(x)|\leq \f{C}{|R_n|^2},
\end{align}
for some constant $C$. Here $D_{R_n}=Q_{R_n}-\qs_{R_n}$.

For the estimates in \eqref{rescale_gradientoqsharp}, we note that it suffices to prove the first estimate, while the second follows from differentiating the identity $Q_{R_n}^\sharp (Q_{R_n}^\sharp)^{-1}=\mathrm{Id}$. For the first one, note that since $Q_{R_n}^\sharp=\mathbb{P}\circ Q_{R_n}$ for some smooth projection map $\mathbb{P}$,
\begin{equation}\label{nabla_oq_sharp}
|\nabla Q_{R_n}^\sharp(x)|\leq \tilde{C}|\nabla Q_{R_n}(x)|,\quad \left|\f{\pa \qs_{R_n}}{\pa r}\right|\leq \tilde{C}\left|\f{\pa Q_{R_n}}{\pa r}\right|, \quad \text{in }B_{3/2}\backslash B_{1/2}.
\end{equation} with the constant $\tilde C$ independent of $n$, and depending only on the distance between $Q_{R_n}(x)$ and the manifold $\cn$. Also, thanks to the proof of Lemma \ref{estimates_at_infty_1}, we have
\beq\label{nabla_oq_rn}
|\na Q_{R_n}|\leq C \text{ in }B_{3/2}\backslash B_{1/2}.
\eeq
\eqref{nabla_oq_rn} and \eqref{nabla_oq_sharp} together imply the first estimate in \eqref{rescale_gradientoqsharp}.

For the estimate \eqref{rescale_estimateD}, we recall the definition for the matrix $X$ in \cite{nz}:
\beqo
X_n:={R_n^2}(Q_{R_n}^2-\f13s_+Q_{R_n}-\f29s_+^2\mathrm{Id}).
\eeqo
According to \cite[Proposition 4]{nz}, we know that there exists a constant $C$ such that
\begin{align}
\label{C1estimate_X}&\quad \|X_n\|_{C^1(B_{3/2}\backslash B_{1/2})}\leq C,\\
\label{X bound by D}& \f{1}{CR_n^2}|X_n|\leq |D_{R_n}|\leq \f{C}{R_n^2}|X_n|,
\end{align}
which yields the first estimates in \eqref{rescale_estimateD}. For the estimate of $|\na D_{R_n}|$, we will utilize the estimate for $|\na X_n|$ to derive an upper bound.

For the sake of convenience, in the rest of the proof we simply write $Q_{R_n},\,D_{R_n}$ as $Q_n,\,D_n$. We also denote
\beqo
Y_n:=\f{X_{n}}{R_n^2}.
\eeqo


Note that $Q_n$ has the decomposition
\beqo
Q_n=\lam_1 e_1\otimes e_1+ \lam_2 e_2\otimes e_2+\lam_3e_3\otimes e_3,
\eeqo
where $\lam_1\geq \lam_2\geq \lam_3$ are three eigenvalues and $e_1,\,e_2,\,e_3$ are the corresponding unit eigenvectors where are orthogonal with each other. Note that $\lam_i,\ e_i$, $i=1,2,3$ also depend on $n$ and on the location $x$ where $Q_n$ is evaluated. For the sake of convenience, we do not indicate this dependence explicitly here.  Since $Q_n$ is very close to $\mathcal{N}$ when $n$ is large (and we only care about the case for large $n$), we can assume roughly $\lam_1\approx \f23 s_+$ and $\lam_i\approx -\f13 s_+$ for $i=2,\,3$. In particular, one can identify the orthogonal projection of $Q_n$ onto $\mathcal{N}$, which is denoted by $Q_n^\sharp$, as
\beqo
\qs_n=s_+ (e_1\otimes e_1-\f13\mathrm{Id}).
\eeqo
For the validity of this expression, one can refer to \cite[Lemma C.1]{gt}. 
Note that since $\lam_1$ is an isolated eigenvalue of $Q_n$, it is well-known, see for instance \cite{n}, that $e_1$ is as smooth as $Q_n$ and we have
\beq\label{est:e1}
|\na e_1|\leq C |\na Q_n|\leq C \quad \text{ in }B_{3/2}\backslash B_{1/2}.
\eeq
By the definitions of $Y_n$ and $X_n$, we compute
\begin{align*}
Y_n&=Q_n^2-\f13s_+Q_n-\f29s_+^2\mathrm{Id}\\
&=(\qs_n+D_n)(\qs_n+D_n)-\f13s_+(\qs_n+D_n)-\f29 s_+^2\mathrm{Id}\\
&=\qs_n D_n+D_n\qs_n+D_n^2-\f13s_+D_n\\
&=s_+(2e_1\otimes e_1-\mathrm{Id})D_n+D_n^2
\end{align*}
where for the third equality we used that $Q_n^\sharp$ is a root of the polynomial equation $Q^2-\f{s_+}{3}Q-\f{2s_+^2}{9}\mathrm{Id}=0$, see for instance \cite[Lemma 1]{nz}. For the fourth equality we used that $D_n$ and $Q_n^\sharp$ commute (as they have a common eigenbases), together with the definition of $\qs_n$.
\begin{equation}\label{est:Yn}
\begin{split}
|\na Y_n|^2 &= \sum_{\al=1}^3\left|\pa_\al \big(s_+(2e_1\otimes e_1-\mathrm{Id})D_n+D_n^2\big)\right|^2\\
 &=\sum_{\al=1}^3 \left| s_+(2e_1\otimes e_1-\mathrm{Id})\pa_\al D_n+2s_+\pa_\al(e_1\otimes e_1)D_n+\pa_\al(D_n^2)  \right|^2\\
 &=\sum_{\al=1}^3 \bigg\{s_+^2|\pa_\al D_n|^2+4s_+^2|\pa_\al(e_1\otimes e_1)D_n|^2+|\pa_\al (D_n^2)|^2\\
 &\quad \qquad+ 4s_+^2(2e_1\otimes e_1-\mathrm{Id})\pa_\al D_n\,:\, \pa_\al(e_1\otimes e_1)D_n+4s_+\pa_\al(e_1\otimes e_1)D_n\,:\,\pa_\al(D_n^2)\\
 &\quad \qquad  +2s_+(2e_1\otimes e_1-\mathrm{Id})\pa_\al D_n\,:\,\pa_\al(D_n^2)\bigg\}\\
&=\left( s_+^2|\na D_n|^2+4s_+^2|\na(e_1\otimes e_1)D_n|^2+|\na (D_n^2)|^2 \right)+S_n.
\end{split}
\end{equation}

Here $S_n$ is defined to be the sum of all the cross terms. Also in passing from the second line to the third line we have used the fact that
\begin{align*}
&|(2e_1\otimes e_1-\mathrm{Id})\pa D_n|^2\\
=&(2e_1\otimes e_1-\mathrm{Id})\pa D_n\,:\,(2e_1\otimes e_1-\mathrm{Id})\pa D_n\\
=&(2e_1\otimes e_1-\mathrm{Id})^2\,:\, (\pa D_n)^2=|\pa_\al D|^2.
\end{align*}
By \eqref{C1estimate_X}, \eqref{X bound by D} and \eqref{est:e1}, there is a constant $C$ such that for $n$ large enough and $x\in B_{3/2}\backslash B_{1/2}$,  we have
\beq\label{est:Sn}
|S_n|\leq \f{C}{R_n^2}|\na D_n|+\f{C}{R_n^2}|\na D_n|^2.
\eeq
We obtain
\beq\label{est:nablaD}
\f{C}{R_n^4} \geq |\na Y_n|^2\geq (s_+^2-\f{C}{R_n^2})|\na D_n|^2-\f{C}{R_n^2}|\na D_n|,
\eeq
where the first inequality is obtained out of \eqref{C1estimate_X} and the definition of $Y_n$ in terms of $X_n$. The second inequality is obtained combining \eqref{est:Yn} and \eqref{est:Sn} for $n$ large enough.
We point out that the constants $C$ in \eqref{est:nablaD} may represent different values, but they are all independent of $x$ and $n$. Finally, it is straightforward to deduce from \eqref{est:nablaD} that, there exist $N$ and a constant $C$ such that,
\beqo
|\na D_n|\leq \f{C}{R_n^2}, \quad \forall n\geq N, \;\forall x\in B_{3/2}\backslash B_{1/2},
\eeqo
and we conclude the proof.



\end{proof}

Now we are in the position to establish a decay estimate for the radial derivative of $Q$. The argument and the proof are presented in the same spirit as \cite[Proposition 5.3]{mp}, and will also frequently utilize the results from \cite{nz}.

\begin{proposition}\label{decay est}
There exist positive constants $R_0$ and $C$ such that for any $R\geq R_0$,
\beq\label{rad_der_decay}
\int_{|x|>R}\f{1}{|x|}\left| \f{\pa Q}{\pa r} \right|^2\,dx\leq C\f{\log{R}}{R^2}.
\eeq
\end{proposition}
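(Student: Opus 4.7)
The plan is to reduce the decay estimate to a differential inequality for the tail quantity
\beqo
\mathcal{I}(R):=\int_{|x|>R}\f{1}{|x|}\Bigl|\f{\pa Q}{\pa r}\Bigr|^2\,dx,
\eeqo
by combining the monotonicity formula with a Pohozaev identity and a sharp tangential lower bound on spheres. First, integrating Lemma \ref{monotonicitylemma} (applied to $Q$, which is a local minimizer for the $\e=1$ functional) from $R$ to $\infty$ and using $\lim_{r\ri\infty}r^{-1}I(Q,B_r)=8s_+^2\pi$ from Proposition \ref{oqconverge} gives
\beqo
\mathcal{I}(R)+2\int_R^\infty r^{-2}\int_{B_r}f_b(Q)\,dy\,dr=8s_+^2\pi-R^{-1}I(Q,B_R).
\eeqo
Testing the Euler-Lagrange equation \eqref{eleq} with $x\cdot\na Q$ and integrating over $B_R$ yields the Pohozaev identity
\beqo
\tfrac12\int_{B_R}|\na Q|^2\,dx+3\int_{B_R}f_b(Q)\,dx=R\int_{\pa B_R}\bigl(\tfrac12|\na_T Q|^2-\tfrac12|\pa_r Q|^2+f_b(Q)\bigr)d\s.
\eeqo
After an integration by parts in $r$ of the bulk-potential double integral (valid since Lemma \ref{estimates_at_infty_2} gives $f_b(Q)\le C|x|^{-4}$ at infinity, so $\int_{B_r}f_b\le C$ and the boundary term at infinity vanishes), these two identities combine into the clean expression
\beqo
\mathcal{I}(R)=\Bigl(8s_+^2\pi-\int_{\pa B_R}\tfrac12|\na_T Q|^2\,d\s\Bigr)+\tfrac12 P(R)-F(R)-2\int_R^\infty r^{-1}F(r)\,dr,
\eeqo
with $P(R):=\int_{\pa B_R}|\pa_r Q|^2\,d\s$ and $F(r):=\int_{\pa B_r}f_b(Q)\,d\s$.

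The next task is the sharp tangential lower bound
\beq\label{plan:tang}
\int_{\pa B_R}\tfrac12|\na_T Q|^2\,d\s\ge 8s_+^2\pi-\f{C}{R^2}.
\eeq
For this I use the decomposition $Q=\qs+D$ provided by Lemma \ref{estimates_at_infty_2}. Since $\deg_\infty(Q)=1$, the smooth map $\qs|_{\pa B_R}:\pa B_R\ri \cn$ has topological degree one, and the Brezis-Coron-Lieb inequality (\cite[Theorem 7.3]{bcl}, applied after the identification $\cn\cong\mathbb{RP}^2$ and lifting to its $\BS$ cover) gives $\int_{\pa B_R}\tfrac12|\na_T \qs|^2\,d\s\ge 8s_+^2\pi$. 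The cross term $2\int \na_T \qs:\na_T D\,d\s$ is then controlled by Cauchy-Schwarz using $|\na \qs|\le C/|x|$ and $|\na D|\le C/|x|^3$ from Lemma \ref{estimates_at_infty_2}, yielding an error of order $R^{-2}$.

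Discarding the nonpositive remainders $-F(R)$ and $-2\int_R^\infty F(r)/r\,dr$, and plugging \eqref{plan:tang} into the identity for $\mathcal{I}(R)$, I arrive at the key estimate
\beqo
\mathcal{I}(R)\le \f{C}{R^2}+\f{P(R)}{2}.
\eeqo
Since by definition $\mathcal{I}'(R)=-P(R)/R$, this rewrites as $2\mathcal{I}+R\mathcal{I}'\le 2C/R^2$, i.e.
\beqo
\f{d}{dR}\bigl(R^2 \mathcal{I}(R)\bigr)\le \f{2C}{R}.
\eeqo
Integrating from a sufficiently large fixed $R_0$ (chosen so that Lemma \ref{estimates_at_infty_2} applies) to $R$, and using the trivial bound $\mathcal{I}(R_0)\le 8s_+^2\pi$ to control the constant of integration, I obtain $R^2\mathcal{I}(R)\le R_0^2\cdot 8s_+^2\pi+2C\log(R/R_0)$, which gives the desired bound $\mathcal{I}(R)\le C\log R/R^2$ for all $R\ge R_0$.

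The main obstacle I anticipate is establishing \eqref{plan:tang}: invoking the correct degree-theoretic lower bound for degree-one maps into the non-orientable target $\cn\cong\mathbb{RP}^2$ rather than $\BS$, and carefully quantifying the loss from the biaxial remainder $D$ so as to get precisely the $C/R^2$ rate (which is what allows the final integration to produce $\log R$ rather than a larger factor). The other steps are essentially algebraic manipulations of standard variational identities once the geometric inequality is in place.
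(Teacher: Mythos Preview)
Your argument is correct and reaches the same final differential inequality $(R^2\mathcal{I}(R))'\le C/R$ as the paper, but the route is genuinely different. The paper works with the projected map $\qs$ and its equation \eqref{ProjEqn::Est1+} from \cite{nz}: multiplying by $\partial_r\qs$, the harmonic-map part drops out exactly and the remainder $H(x)=O(|x|^{-4})$ produces a source term $I(x)=O(|x|^{-5})$; the inequality is then closed by sending $R_2\to\infty$ along a subsequence on which $\int_{\pa B_{r_k}}|\pa_r\qs|^2\to 0$ and $\qs_{r_k}|_{\BS}\to\Psi|_{\BS}$, using only that the limiting tangent map saturates the degree-one lower bound. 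You instead work directly with $Q$, combine the classical Pohozaev identity with the integrated monotonicity formula, and then invoke the Brezis--Coron--Lieb bound $\int_{\pa B_R}\tfrac12|\na_T\qs|^2\ge 8s_+^2\pi$ on \emph{every} sphere, with the $O(R^{-2})$ defect coming from the cross term $\na_T\qs:\na_T D$ via Lemma~\ref{estimates_at_infty_2}. Your approach is more self-contained (it bypasses the detailed structure of the projected equation and the subsequence/compactness step), while the paper's approach makes the underlying ``approximate harmonic map'' picture more explicit and localizes all the analytic input from \cite{nz} into the single error estimate $H=O(|x|^{-4})$.
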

\begin{proof}
Firstly we point out that it suffices to prove \eqref{rad_der_decay} for $Q^\sharp$ since $|\na D(x)|\sim |x|^{-3}$ as $|x|\ri \infty$ thanks to Proposition \ref{estimates_at_infty_2}.  By Proposition \ref{oqconverge}, there exists $R_0$ such that $Q^\sharp(x)$ is well defined and $\mathrm{deg}(Q^\sharp,\pa B_R)=1$ whenever $R\geq R_0$. Recall that $Q$ satisfies the Euler-Lagrange equation in $\BR^3$,
\beqo
\D Q=-a^2Q-b^2[Q^2-\f13\mathrm{tr}(Q)^2\,\mathrm{Id}]+c^2\mathrm{tr}(Q)^2Q
\eeqo
We also have the equation for $Q^\sharp$ by \cite[Proposition 2]{nz},
\begin{equation}
\label{ProjEqn::Est1+}
\begin{split}
\Delta Q^\sharp
	=& -\frac{2}{s_+^2}|\nabla Q^\sharp|^2\,Q^\sharp + \frac{2}{s_+}\Big[\sum_{\alpha = 1}^3(\nabla_\alpha Q^\sharp)^2 - \frac{1}{3}|\nabla Q^\sharp|^2\,\mathrm{Id}\Big]\footnotemark\\
		&\quad  - \Big[T^{-1}\big(\frac{1}{s_+}\,Q^\sharp - \frac{2}{3}\,\mathrm{Id}\big)W - W\big(\frac{1}{s_+}\,Q^\sharp - \frac{2}{3}\,\mathrm{Id}\big)T^{-1}\Big],
\end{split}
\end{equation}
\footnotetext{In reference \cite{nz} a different form is used for the first line, namely the expression in (iv) Corollary 1, which is just the equation for the limit harmonic map. The form we use is an equivalent one, which is the form (ii) in Corollary 1.}
where
\begin{equation}\label{ProjEqn::Est2+}
\begin{split}
W&= 2\nabla Q^\sharp\,\nabla[(Q^\sharp)^{-1}\,Q]\,Q^\sharp - 2Q^\sharp\,\nabla[(Q^\sharp)^{-1}\,Q]\,\nabla Q^\sharp\\
		&\qquad\qquad - \frac{1}{s_+}Q\sum_{\alpha=1}^3 (\nabla_\alpha Q^\sharp)^2 + \frac{1}{s_+}\sum_{\alpha=1}^3 (\nabla_\alpha Q^\sharp)^2\,Q,
\end{split}
\end{equation}
\beq\label{ProjEqn::Est3+}
T= Q - \frac{2}{9}s_+\,\mathrm{tr}[(Q^\sharp)^{-1}\,Q]\,\mathrm{Id} + \beta\Big[\frac{1}{s_+}\,Q^\sharp + \frac{1}{3}\,\mathrm{Id}\Big],
\eeq
and $\beta$ is an arbitrary nonzero real number.

We note that because $Q$ is bounded in $L^\infty$ and close to $\cn$ when $|x|>R_0$, we have that there exists a constant $\tilde C$ depending only on $a^2,b^2,c^2$ and how close the $Q$ is to $\cn$ such that
\begin{equation}
|T^{-1}\big(\frac{1}{s_+}\,Q^\sharp - \frac{2}{3}\,\mathrm{Id}\big)|,|\big(\frac{1}{s_+}\,Q^\sharp - \frac{2}{3}\,\mathrm{Id}\big)T^{-1}|\le \tilde C
\end{equation}

Recall the following decomposition
\begin{equation}\label{decom_q}
Q=Q^\sharp+D
\end{equation} and then we have:

\begin{equation}\label{ProjEqn::Est2++}
\begin{split}
W &= 2\nabla Q^\sharp\,\nabla[(Q^\sharp)^{-1}\,D]\,Q^\sharp - 2Q^\sharp\,\nabla[(Q^\sharp)^{-1}\,D]\,\nabla Q^\sharp\nonumber\\
		&\qquad\qquad - \frac{1}{s_+}D\sum_{\alpha=1}^3 (\nabla_\alpha Q^\sharp)^2 + \frac{1}{s_+}\sum_{\alpha=1}^3 (\nabla_\alpha Q^\sharp)^2\,D\nonumber\\
		&\qquad\qquad - \frac{1}{s_+}Q^\sharp\sum_{\alpha=1}^3 (\nabla_\alpha Q^\sharp)^2 + \frac{1}{s_+}\sum_{\alpha=1}^3 (\nabla_\alpha Q^\sharp)^2\,Q^\sharp.
\end{split}
\end{equation}

We claim that the last two terms vanish. Indeed, for any $\alpha=1,2,3$, we have $\nabla_\alpha Q^\sharp\in T_{Q^\sharp} \cn$, see for instance \cite[Lemma 2]{nz} for a characterization of the tangent space and the normal space to $\cn$ at a point $Q$, which are denoted by $T_Q\cn$ and $(T_Q\cn)^\perp$ respectively. And then by \cite[Lemma 3]{nz} we get that $\sum_{\alpha=1}^3 (\nabla_\alpha Q^\sharp)^2\in((T_{Q^\sharp}) \cn)^\perp$. On the other hand the characterization of the space $((T_{Q^\sharp}) \cn)^\perp$ in \cite[Lemma 2]{nz} shows that the elements in this space commute with matrices $Q^\sharp$, hence the last two terms vanish as claimed.

We rewrite the equation \eqref{ProjEqn::Est1+} as
\beq\label{ProjEqn::Est1+Hx}
\Delta Q^\sharp
	=-\frac{2}{s_+^2}|\nabla Q^\sharp|^2\,Q^\sharp + \frac{2}{s_+}\Big[\sum_{\alpha = 1}^3(\nabla_\alpha Q^\sharp)^2 - \frac{1}{3}|\nabla Q^\sharp|^2\,\mathrm{Id}\Big]+H(x)
\eeq
where
\beq\label{def:H}
H(x):=- \Big[T^{-1}\big(\frac{1}{s_+}\,Q^\sharp - \frac{2}{3}\,\mathrm{Id}\big)W - W\big(\frac{1}{s_+}\,Q^\sharp - \frac{2}{3}\,\mathrm{Id}\big)T^{-1}\Big]=O(|x|^{-4}),\quad \text{as }|x|\ri\infty
\eeq
where the last estimate results from Lemma \ref{estimates_at_infty_2} and relation \eqref{ProjEqn::Est2++} (without the last two terms that vanish). We multiply \eqref{ProjEqn::Est1+Hx} by $\f{\pa Q^\sharp}{\pa r}$. It is straightforward to verify that
\beqo
\left\{-\frac{2}{s_+^2}|\nabla Q^\sharp|^2\,Q^\sharp + \frac{2}{s_+}\Big[\sum_{\alpha = 1}^3(\nabla_\alpha Q^\sharp)^2 - \frac{1}{3}|\nabla Q^\sharp|^2\,\mathrm{Id}\Big]\right\}\cdot\f{\pa Q^\sharp}{\pa r}=0.
\eeqo
Thus we have
\beq\label{multiply_dr}
0=(\D\qs-H(x))\cdot\f{\pa\qs}{\pa r}=\f{1}{|x|}\left| \f{\pa\qs}{\pa r} \right|^2+\di{\Phi(x)}-I(x),
\eeq
where
\beqo
\Phi(x):=\na \qs\cdot\f{\pa \qs}{\pa r}-\f12|\na\qs|^2\f{x}{|x|},\qquad I(x):=H(x)\cdot \f{\pa\qs}{\pa r}= O(|x|^{-5}).
\eeqo
Integrating \eqref{multiply_dr} on an annulus $B_{R_2}\backslash B_{R_1}$ for some $R_2>R_1>R_0$ and then performing integration by parts leads to
\begin{align}
\nonumber & \int_{B_{R_2}\backslash B_{R_1}}\f{1}{|x|}\left| \f{\pa\qs}{\pa r} \right|^2\,dx-\f{1}{2}\int_{\pa B_{R_1}}\left| \f{\pa\qs}{\pa r} \right|^2\,d\sigma\\
\label{inte_by_parts}=&\f12\int_{\pa B_{R_2}}|\na_T \qs|^2\,d\s-\f12\int_{\pa B_{R_1}}|\na_T \qs|^2\,d\s-\f{1}{2}\int_{\pa B_{R_2}}\left| \f{\pa\qs}{\pa r} \right|^2\,d\sigma+\int_{B_{R_2}\backslash B_{R_1}}I(x)\,dx,
\end{align}
where $\na_T$ means the tangential gradient on the sphere. Note that by the second inequality of \eqref{nabla_oq_sharp} and the monotonicity formula \eqref{monotonicityformula} we have
\beqo
\int_0^\infty\left(\f{1}{R} \int_{\pa B_R}\left| \f{\pa \qs}{\pa r} \right|^2\,d\s\right) \,dR\leq \tilde{C} \int_0^\infty\left(\f{1}{R} \int_{\pa B_R}\left| \f{\pa Q}{\pa r} \right|^2\,d\s\right)\,dR<\infty.
\eeqo
So we can find a sequence $\{r_k\}_{k=1}^\infty$ such that
\beq\label{est:rk1}
r_k\ri \infty,\quad  \int_{\pa B_{r_k}}\left| \f{\pa \qs}{\pa r} \right|^2\,d\s\ri 0\qquad \text{ as }k\ri\infty
\eeq
Now we use the compactness property of $Q_{r_k}$ (see Theorem \ref{tangentmap} and Remark \ref{ck_convergence}) and the closeness between $Q_{r_k}$ and $\qs_{r_k}$ (see \eqref{estimateD}) to derive that, up to a subsequence,
\beq\label{est:rk2}
\qs_{r_k}|_{\mathbb{S}^2}\ri \Psi(x)|_{\mathbb{S}^2}\;\text{ in }C^1(\mathbb{S}^2,\cn).
\eeq
Combining \eqref{est:rk1}, \eqref{est:rk2} and the fact that $\Psi$ is energy-minimizing among all degree-1 map from $\mathbb{S}^2$ to $\cn$, we get
\beqo
\lim\limits_{k\ri\infty} \f12\int_{\pa B_{r_k}}|\na_T \qs|^2\,d\s-\f12\int_{\pa B_{R_1}}|\na_T \qs|^2\,d\s-\f{1}{2}\int_{\pa B_{r_k}}\left| \f{\pa\qs}{\pa r} \right|^2\,d\sigma\leq 0
\eeqo
Substituting the above inequality into \eqref{inte_by_parts} gives
\beq\label{inequality:R1}
\int_{|x|>R_1}\f{1}{|x|}\left| \f{\pa \qs}{\pa r} \right|^2\,dx-\f12\int_{\pa B_{R_1}}\left| \f{\pa \qs}{\pa r} \right|^2\,d\s\leq \int_{|x|>R_1}I(x)\,dx\quad \forall R_1\ge R_0
\eeq
Write $R_1=r$ and multiply \eqref{inequality:R1} by $2r$ to obtain
\beq\label{inequality:r}
\f{d}{dr} \left( r^2 \int_{|x|>r}\f{1}{|x|}\left| \f{\pa \qs}{\pa r} \right|^2\,dx\right)\leq Cr^{-1},\quad \text{for some }C>0.
\eeq
Here we utilize the estimates $I(x)=O(|x|^{-5})$.

Finally we integrate \eqref{inequality:r} from $R_0$ to any $R>R_0$ to conclude that
\begin{align*}
&R^2\int_{|x|>R}\f{1}{|x|}\left| \f{\pa \qs}{\pa r} \right|^2\,dx\\
\leq & R_0^2\int_{|x|>R_0}\f{1}{|x|}\left| \f{\pa \qs}{\pa r} \right|^2\,dx+\int_{R_0}^R\f{C}{r}\,dr\leq C\log{R},
\end{align*}
which proves \eqref{rad_der_decay}.

\end{proof}

Now we are ready to prove the uniqueness of the tangent map for $Q$.
\begin{thm}\label{uniquenessthm}
Let $Q$ be the limiting map defined in Proposition \ref{oqconverge}. Then the tangent map at infinity is unique, i.e. there exists a $\Psi$ which is of the form \eqref{psi}, such that
\begin{align}
\label{h1conv}&\lim\limits_{R\ri\infty}\|Q_R-\Psi\|_{H^1_{loc}(\BR^3)}=0\\
\label{ckconv}&\lim\limits_{R\ri\infty}\|Q_R|_{\BS}-\Psi|_{\BS}\|_{C^k(\BS)}=0,\quad \forall k\in \mathbb{N}^+
\end{align}
\end{thm}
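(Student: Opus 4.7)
The plan is to exploit the radial-derivative decay from Proposition \ref{decay est} to show first that the spherical traces $Q_R|_{\BS}$ form a Cauchy family in $L^2(\BS)$ as $R\to\infty$, then bootstrap to $C^k(\BS)$ convergence using the uniform gradient estimates of Lemma \ref{estimates_at_infty_1}, and finally extend to strong $H^1_{loc}(\BR^3)$ convergence by matching energies. The key quantitative input throughout is the decay estimate \eqref{rad_der_decay}.

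For the $L^2(\BS)$-Cauchy step, I would start with the identity $Q(R_2\theta)-Q(R_1\theta)=\int_{R_1}^{R_2}\f{\pa Q}{\pa r}(r\theta)\,dr$, apply Cauchy-Schwarz in the radial direction, and then integrate over $\theta\in\BS$ to obtain
\beqo
\|Q_{R_2}-Q_{R_1}\|_{L^2(\BS)}^2 \leq \log(R_2/R_1)\int_{R_1<|x|<R_2}\f{1}{|x|}\left|\f{\pa Q}{\pa r}\right|^2\,dx.
\eeqo
Feeding \eqref{rad_der_decay} into this inequality with dyadic pairs $(R_1,R_2)=(2^j R,2^{j+1}R)$ gives $\|Q_{2^{j+1}R}-Q_{2^jR}\|_{L^2(\BS)}\leq C\sqrt{\log(2^jR)}/(2^jR)$, and summing the resulting tail $\sum_j\sqrt{\log R+j}/2^j\sim\sqrt{\log R}$ produces a uniform bound $\|Q_{R'}-Q_R\|_{L^2(\BS)}\leq C\sqrt{\log R}/R$ valid for all $R'\geq R\geq R_0$. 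Hence $Q_R|_{\BS}$ converges in $L^2(\BS)$ to some $\psi_0$ as $R\to\infty$.

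Next, Lemma \ref{estimates_at_infty_1} rescales to uniform bounds $\|Q_R|_{\BS}\|_{C^k(\BS)}\leq C_k$ for every $k$, so Arzel\`a-Ascoli together with the $L^2$ uniqueness of $\psi_0$ upgrades the convergence to $C^k(\BS)$ for every $k$. Applying Theorem \ref{tangentmap} along any subsequence identifies $\psi_0$ as $\Psi|_{\BS}$ for some $\Psi$ of the form \eqref{psi}, which proves \eqref{ckconv}. For \eqref{h1conv}, the uniform $C^0(\BS)$ convergence combined with the $0$-homogeneity of $\Psi$ yields pointwise and $L^2_{loc}(\BR^3)$ convergence of $Q_R$ to $\Psi$. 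For the gradients, Lemma \ref{estimates_at_infty_2} gives $f_b(Q(x))=O(|x|^{-4})$, hence $\f{1}{R}\int_{B_R}f_b(Q)\,dx\to 0$; combined with the energy asymptotics in Proposition \ref{oqconverge} this yields $\f{1}{R}\int_{B_R}|\na Q|^2\,dx\to 16\pi s_+^2$. Rescaling produces $\int_{B_M}|\na Q_R|^2\,dx\to 16\pi s_+^2 M=\int_{B_M}|\na \Psi|^2\,dx$, and strong $H^1_{loc}(\BR^3)$ convergence follows from weak $H^1_{loc}$ precompactness together with the norm convergence and the strong $L^2_{loc}$ convergence already in hand.

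The main obstacle lies in the first step: the rate $R^{-2}\log R$ in Proposition \ref{decay est} is exactly the borderline rate that, when processed through the dyadic Cauchy-Schwarz estimate, lets the logarithmic factors be absorbed into the geometric $2^{-j}$ decay and yield a summable telescoping sum. This delicate balancing of the logarithm against the geometric factor is what converts the qualitative subsequential statement of Theorem \ref{tangentmap} into genuine uniqueness of the tangent map at infinity; once it is in hand, the $C^k$ upgrade, the identification of the limit, and the strong $H^1_{loc}$ convergence are essentially routine consequences of the a priori estimates already developed in Lemmas \ref{estimates_at_infty_1}, \ref{estimates_at_infty_2} and Proposition \ref{oqconverge}.
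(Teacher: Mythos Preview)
Your proposal is correct and follows essentially the same approach as the paper: both exploit the radial-derivative decay estimate \eqref{rad_der_decay} from Proposition \ref{decay est} via a dyadic summation to establish the $L^2(\BS)$ Cauchy property of $R\mapsto Q_R|_{\BS}$, which is the heart of the uniqueness argument. The only minor organizational differences are that the paper argues by contradiction (assuming two distinct subsequential tangent maps and reaching a contradiction with the Cauchy estimate) and deduces \eqref{ckconv} from \eqref{h1conv} via Remark \ref{ck_convergence}, whereas you argue directly, prove \eqref{ckconv} first by Arzel\`a--Ascoli, and then obtain \eqref{h1conv} by an energy-matching argument; both routes are valid and rest on the same key quantitative input.
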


\begin{proof}
It suffices to prove \eqref{h1conv}, since \eqref{ckconv} is a direct consequence of \eqref{h1conv} and Remark \ref{ck_convergence}. We prove by contradiction. Assume the statement is false, then there would be two harmonic maps $\Psi_1$ and $\Psi_2$ and two sequences of radiuses $\{r^1_{i}\}_{i=1}^\infty$ and $\{r^2_{j}\}_{j=1}^\infty$ satisfying
\begin{align*}
&\qquad\qquad \lim\limits_{i\ri\infty} r^1_i=\lim\limits_{j\ri\infty}r_j^2=\infty ,\\
&Q_{r^1_i}\ri \Psi_1,\quad Q_{r^2_j}\ri \Psi_2\quad \text{in the sense of }C^2_{loc}.
\end{align*}
We take $i_0$ and $j_0$ be the integers such that for any $i\geq i_0$ and $j\geq j_0$,
\begin{align}
\nonumber &\|Q_{r^1_{i}}-\Psi_1\|_{L^2(\BS)}\leq \f{1}{8}\|\Psi_1-\Psi_2\|_{L^2(\BS)},\\
\nonumber &\|Q_{r^2_{j}}-\Psi_2\|_{L^2(\BS)}\leq \f{1}{8}\|\Psi_1-\Psi_2\|_{L^2(\BS)},\\
\label{diff_12}&\|Q_{r^1_i}-Q_{r^2_{j}}\|_{L^2(\BS)}>\f12\|\Psi_1-\Psi_2\|_{L^2(\BS)}.
\end{align}

We fix the $R_0$ as in Proposition \ref{decay est}. For any $R_0\leq R_1<R_2\leq 2R_1$, we compute
\begin{align*}
\int_{\BS}|Q_{R_1}(\s)-Q_{R_2}(\s)|^2\,d\s
\leq & \int_{\BS} \left((R_2-R_1)\int_{R_1}^{R_2} \left|\f{\pa Q(r\s)}{\pa r}\right|^2\,dr\right)   \,d\s\\
\leq & \int_{\BS} \left(\int_{R_1}^{R_2}r \left|\f{\pa Q}{\pa r}\right|^2 \,dr \right) \,d\s\\
=& \int_{R_1<|x|<R_2} \f{1}{|x|}\left| \f{\pa Q}{\pa r} \right|^2\,dx\leq C \f{\log{R_1}}{R_1^2}
\end{align*}
Now we fix $R_1$, and assume $R_2$ be an arbitrary large radius such that $2^kR_1<R_2\leq 2^{k+1}R_1$ for some non-negative integer $k$. Then we have
\begin{align*}
\|Q_{R_1}-Q_{R_2}\|_{L^2(\BS)}&\leq \sum\limits_{i=0}^{k-1}\|Q_{2^iR_1}-Q_{2^{i+1}R_1}\|_{L^2(\BS)}+\|Q_{R_2}-Q_{2^{k}R_1}\|_{L^2(\BS)}\\
& \leq \sum\limits_{i=0}^k \f{\sqrt{\log{2^i R_1}}}{2^i R_1}\leq C\f{\sqrt{\log{R_1}}}{R_1}
\end{align*}

As a consequence, we have that
\beqo
\lim\limits_{i,j\ri\infty}\|Q_{r^1_i}-Q_{r^2_j}\|_{L^2(\BS)}=0
\eeqo
which yields a contradiction with \eqref{diff_12}. The proof is complete.

\end{proof}

Recall our assumption \eqref{linfty H1 conv} at the very beginning, which says
\beqo
\lim\limits_{n\ri\infty}\|Q_{\e_n}(x)-s_+(\f{x}{|x|}\otimes \f{x}{|x|}-\f13 \mathrm{Id})\|_{L^\infty(B_{\f32r_n}\backslash B_{\f12 r_n})}=0
\eeqo
After a change of variable (making $r_na_n$ as the ``central point"), we have
\beqo
\lim\limits_{n\ri\infty}\|Q_{\e_n}(x+r_na_n)-s_+(\f{x+r_na_n}{|x+r_na_n|}\otimes \f{x+r_na_n}{|x+r_na_n|}-\f13 \mathrm{Id})\|_{L^\infty(B_{\f43r_n}\backslash B_{\f23 r_n})}=0,
\eeqo
where $a_n$ is defined in \eqref{def:an}. Note that when $\f23r_n\leq |x|\leq \f43r_n$, $\f{x+r_na_n}{|x+r_na_n|}$ is very close to $\f{x}{|x|}$ given $|a_n|$ sufficiently small (see the remark of $a_n\ri 0$ after \eqref{def:an}). As $n\ri\infty$, we obtain
\beq\label{outer}
\lim\limits_{n\ri\infty}\|Q_{\e_n}(x+r_na_n)-s_+(\f{x}{|x|}\otimes \f{x}{|x|}-\f13 \mathrm{Id})\|_{L^\infty(B_{\f43r_n}\backslash B_{\f23 r_n})}=0
\eeq

On the other hand, by Theorem \ref{tangentmap} and Theorem \ref{uniquenessthm}, we have that
\beq\label{inner1}
\lim\limits_{R\ri\infty} \|Q(x)-s_+(n\otimes n-\f13\mathrm{Id})\|_{L^\infty(B_{2R}\backslash B_{R})}=0
\eeq
where $n(x)=T(\f{x}{|x|})$ for some $T\in O(3)$. Since $Q$ is obtained by taking a $C^2_{loc}$ limit of $Q_n(x)$ (see \eqref{def:qnn}), for any fixed $R$ it holds that
\beq\label{inner2}
\lim_{\substack{\f{r_n}{2\e_n}\geq R,\\ n\ri\infty}} \|Q_{\e_n}(x+r_na_n)-Q(\f{x}{\e_n})\|_{L^\infty(B_{2R\e_n}\setminus B_{R\e_n})}=0
\eeq
\eqref{inner1} and \eqref{inner2} together imply that
\beq\label{inner}
\lim\limits_{R\ri\infty} \left(  \lim_{\substack{\f{r_n}{2\e_n}\geq R,\\ n\ri\infty}}  \|Q_{\e_n}(x+r_na_n)-s_+(n(x)\otimes n(x)-\f13\mathrm{Id})\|_{L^\infty(B_{2R\e_n}\setminus B_{R\e_n})} \right)=0.
\eeq

Comparing \eqref{outer} and \eqref{inner} we knows that $Q_{\e_n}(x+r_na_n)$ is close to $s_+(\f{x}{|x|}\otimes \f{x}{|x|}-\f13\mathrm{Id})$ at $|x+r_na_n|\sim r_n$, but when $|x+r_na_n|\sim R\e_n$ for large enough $R$, it is asymptotically $s_+(n(x)\otimes n(x)-\f13 \mathrm{Id})$ as $n\ri\infty$. A natural question would be whether or not $n(x)=\f{x}{|x|}$, so that the behavior of $Q_{\e_n}$ on the outer sphere $\pa B_{r_n}(r_na_n)$ will match that of the inner sphere $\pa B_{R\e_n}(r_na_n)$. The answer is positive.

\begin{theorem}\label{match in out}
Let $Q$ be the limiting map in Proposition \ref{oqconverge} and $\Psi$ is its unique tangent map at infinity. Then $\Psi=\Phi$, i.e.
\beqo
\Psi=s_+(\f{x}{|x|}\otimes \f{x}{|x|}-\f13\mathrm{Id}).
\eeqo
\end{theorem}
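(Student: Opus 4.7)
The strategy is to compare $\Phi$ and $\Psi$ at an intermediate scale $\rho_n$, with $\varepsilon_n \ll \rho_n \ll r_n$, by examining the trace of $Q_{\varepsilon_n}(r_na_n + \rho_n\,\cdot)$ on the unit sphere. Read from the inner side this trace equals $Q_n(\mu_n\,\cdot)|_{\mathbb{S}^2}$ with $\mu_n := \rho_n/\varepsilon_n \to \infty$, so the $C^2_{loc}$-convergence $Q_n \to Q$ of Proposition~\ref{oqconverge} combined with the uniqueness of the tangent map at infinity (Theorem~\ref{uniquenessthm}) will deliver $\Psi$ as the limit. Read from the outer side, the same trace equals $U_n(a_n + \lambda_n\,\cdot)|_{\mathbb{S}^2}$ with $\lambda_n := \rho_n/r_n \to 0$, so the strong $H^1$-convergence $U_n \to \Phi$ of \eqref{linfty H1 conv}, combined with a Fubini slicing argument, will deliver $\Phi$ as the limit. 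Matching the two limits on a single intermediate sphere will then force $\Psi|_{\mathbb{S}^2} = \Phi|_{\mathbb{S}^2}$, and the $0$-homogeneity of both maps will promote this to $\Psi \equiv \Phi$ on $\mathbb{R}^3 \setminus \{0\}$, equivalently to $T = \pm\mathrm{Id}$ in the representation~\eqref{psi}.

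\textbf{Choice of scale.} Set $e_n := \|U_n - \Phi\|_{H^1(B_1)}^2 \to 0$ and let
\[
\lambda_n := \max\bigl(|a_n|^{1/2},\; e_n^{1/4},\; R_n^{-1/2}\bigr),
\]
so that simultaneously $\lambda_n \to 0$, $|a_n|/\lambda_n \to 0$, $R_n \lambda_n \to \infty$, and $e_n/\lambda_n^3 \to 0$. A Fubini/averaging step on the annulus $\{\lambda_n/2 < |x - a_n| < \lambda_n\}$ produces a radius $\lambda_n^\sharp$ in this window with
\[
\int_{\partial B_{\lambda_n^\sharp}(a_n)} |U_n - \Phi|^2\, d\sigma \;\lesssim\; \frac{e_n}{\lambda_n}.
\]
I set $\rho_n := r_n \lambda_n^\sharp$ and $\mu_n := R_n \lambda_n^\sharp \to \infty$; by a standard diagonal extraction from the $C^2_{loc}$-convergence $Q_n \to Q$ (passing to a further subsequence if necessary) one may arrange $\|Q_n - Q\|_{C^2(B_{\mu_n})} \to 0$, possibly shrinking $\mu_n$ while preserving $\mu_n \to \infty$.

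\textbf{Matching and main obstacle.} Setting $V_n(\theta) := Q_{\varepsilon_n}(r_na_n + \rho_n\theta)$ one has $V_n(\theta) = Q_n(\mu_n \theta)$, so the diagonal $C^2$-bound and Theorem~\ref{uniquenessthm} yield $V_n \to \Psi$ in $C^0(\mathbb{S}^2)$. Rewriting $V_n(\theta) = U_n(a_n + \lambda_n^\sharp \theta)$ and rescaling the trace estimate gives
\[
\|V_n - \Phi(a_n + \lambda_n^\sharp\,\cdot)\|_{L^2(\mathbb{S}^2)}^2 = (\lambda_n^\sharp)^{-2}\int_{\partial B_{\lambda_n^\sharp}(a_n)} |U_n - \Phi|^2 \, d\sigma \;\lesssim\; \frac{e_n}{\lambda_n^3} \to 0,
\]
while the Lipschitz continuity of $\Phi|_{\mathbb{S}^2}$ combined with $|a_n|/\lambda_n^\sharp \to 0$ forces $\|\Phi(a_n + \lambda_n^\sharp\,\cdot) - \Phi(\cdot)\|_{L^\infty(\mathbb{S}^2)} \lesssim |a_n|/\lambda_n^\sharp \to 0$; therefore $V_n \to \Phi$ in $L^2(\mathbb{S}^2)$. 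Identifying the two limits and invoking $0$-homogeneity completes the proof. The principal obstacle is that Proposition~\ref{diam_estimate} supplies no quantitative rate for $|a_n| \to 0$ nor an explicit interplay with the rate $R_n \to \infty$ or with the $H^1$-convergence rate of $U_n \to \Phi$; the careful balancing of these three competing smallness scales in the choice of $\lambda_n$, together with the Fubini slicing used to select $\lambda_n^\sharp$, is the critical technical device that makes both the inner and outer convergences coexist on the same intermediate sphere.
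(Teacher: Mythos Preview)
Your overall strategy---matching the inner and outer asymptotics on a single intermediate sphere---is natural, but the inner convergence step contains a genuine gap. After the Fubini slicing fixes $\lambda_n^\sharp \in [\lambda_n/2,\lambda_n]$, the radius $\mu_n = R_n\lambda_n^\sharp$ is \emph{determined}; you then assert that ``by a standard diagonal extraction \ldots\ one may arrange $\|Q_n-Q\|_{C^2(B_{\mu_n})}\to 0$, possibly shrinking $\mu_n$.'' But shrinking $\mu_n$ forces $\lambda_n^\sharp$ to shrink as well, which destroys the trace estimate you just obtained from Fubini; and without shrinking, nothing links the rate of $C^2_{loc}$ convergence of $Q_n$ to $Q$ with the other rates $e_n\to 0$, $|a_n|\to 0$, $R_n\to\infty$. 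Concretely, writing $\tilde\mu_n$ for a diagonal radius along which $\|Q_n-Q\|_{C^2(B_{\tilde\mu_n})}\to 0$, the compatibility you need is roughly $e_n^{1/3}\ll \tilde\mu_n/R_n$, and the setup gives no reason for this to hold along any subsequence (one can have $R_n$ growing much faster than $\tilde\mu_n$ while $e_n$ decays slowly). Your list of ``three competing smallness scales'' omits this fourth one, and it is exactly the one that breaks the argument.

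The paper avoids this by never trying to meet at a single intermediate scale. Instead it proves a uniform-in-$n$ decay of the radial derivative (Lemma~\ref{decay radial}):
\[
\int_{\{R\e_n\le |x|\le 2R\e_n\}}\frac{1}{|x|}\left|\frac{\partial Q_{\e_n}(x+r_na_n)}{\partial r}\right|^2 dx \le \frac{C}{R^4}\qquad\text{for all }n\text{ and all }R\le \frac{r_n}{2\e_n}.
\]
With this in hand, the inner convergence is invoked only at a \emph{fixed} radius $r_0$ (so ordinary $C^2_{loc}$ convergence of $Q_n$ to $Q$ suffices), and a dyadic telescoping sum from $r_0\e_n$ up to $r_n$ gives
$\|Q_{\e_n}(r_0\e_n\,\cdot+r_na_n)-Q_{\e_n}(r\,\cdot+r_na_n)\|_{L^2(\BS)}\lesssim r_0^{-2}$ uniformly in $n$ and in $r\sim r_n$. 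Choosing $r_0$ large then contradicts $\Psi\neq\Phi$. The uniform estimate of Lemma~\ref{decay radial} is precisely what stands in for your diagonal step, and it is not free: its proof requires a separate compactness argument together with the sharp $O(R^{-2})$ rate from \cite[Proposition~9]{nz}.
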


To prove the theorem, we need the following lemma, which gives crucial estimates on the decay rate of the radial derivatives.

\begin{lemma}\label{decay radial}
Let $Q_{\e_n}$ be the sequence of minimizers such that $Q_{\e_n}(\e_nx+r_na_n)\ri Q$ in $C^2_{loc}(\BR^3)$ with $r_n$, $\e_n$ satisfying \eqref{def:rn}, \eqref{relation rn en}, \eqref{outside rn uniform conv} and \eqref{linfty H1 conv}, $a_n$ as defined in \eqref{def:an}. Then there is a positive constant $C$, such that for any $\e_n$ and $R\leq \f{r_n}{2\e_n}$, it holds that
\beq\label{decay of radial der}
\int_{\{R\e_n\leq |x|\leq 2R\e_n\}}\f{1}{|x|} \left|\f{\pa Q_{\e_n}(x+r_na_n) }{\pa r}\right|^2\,dx\leq \f{C}{R^4}.
\eeq
\end{lemma}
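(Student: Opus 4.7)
The plan is to perform a blow-up at scale $R\e_n$ around $r_n a_n$, reduce the estimate via the monotonicity formula to a monotonicity-defect bound for the rescaled minimizer, and then exploit minimality through a Luckhaus-based radial competitor, in the spirit of the proof of Lemma \ref{strong convergence of Vn}. Set $V_n(y):=Q_{\e_n}(R\e_n y+r_n a_n)$ on $B_2$; by direct computation $V_n$ is a local minimizer (in the sense of Definition \ref{def local minimizer}) of $\int(\tfrac12|\na V|^2+R^2 f_b(V))\,dy$. The change of variables $x=R\e_n y$ converts the left-hand side of \eqref{decay of radial der} into $\int_{\{1\le|y|\le 2\}}|y|^{-1}|\pa_r V_n|^2\,dy$, and by Lemma \ref{monotonicitylemma} applied to $V_n$ this is dominated by $M_n(2)-M_n(1)$, where $M_n(\rho):=\rho^{-1}\int_{B_\rho}(\tfrac12|\na V_n|^2+R^2 f_b(V_n))\,dy$ is non-decreasing. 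The claim therefore reduces to showing $M_n(2)-M_n(1)\le C R^{-4}$.

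For this, I would construct an admissible competitor $W_n\in H^1(B_2,\mathcal{Q}_0)$ with $W_n\equiv V_n$ on $B_1\cup\pa B_2$ and with (nearly) vanishing radial derivative on the annulus, following the four-layer template \eqref{def of Wn}. On an outer annular region take the radial extension $W_n(r\theta):=\BP(V_n(2\theta))$, which is $\cn$-valued, satisfies $\pa_r W_n\equiv 0$, and whose tangential gradient energy equals $\tfrac12\int_{\pa B_2}|\na_T\BP(V_n)|^2\,d\sigma$ up to a vanishing volume factor. On two thin shells adjacent to $\pa B_1$ apply Lemma \ref{luck} twice, with $\mu_n:=\|V_n-\BP(V_n)\|_{L^2(\pa B_1)}^2$: first, a shell of width $\sqrt{\mu_n}$ gluing $V_n|_{\pa B_1}$ to $\BP(V_n|_{\pa B_1})$; second, a shell of width $\mu_n^{1/6}$ interpolating between $\BP(V_n|_{\pa B_1})$ and the inner boundary of the radial extension. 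By the rescaled version of Lemma \ref{estimates_at_infty_2} ($|D|\le C|x|^{-2}$) we have $\mu_n\le CR^{-4}$, and the bulk potential contribution is negligible because the construction stays inside $\cn_{\delta_0}$.

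Plugging $W_n$ into the minimality inequality $I(V_n,B_2)\le I(W_n,B_2)$ and subtracting $I(V_n,B_1)$ yields $2M_n(2)-M_n(1)\le \tfrac12\int_{\pa B_2}|\na_T\BP(V_n)|^2\,d\sigma+O(\sqrt{\mu_n}+\mu_n^{1/6})$. The key cancellation is that for the $0$-homogeneous tangent map $\Psi$ (Theorem \ref{uniquenessthm}) one has $\tfrac12\int_{\pa B_2}|\na_T\Psi|^2\,d\sigma=8s_+^2\pi=\int_{\{1\le|y|\le 2\}}\tfrac12|\na_T\Psi|^2\,dy$, so the tangential energy on $\pa B_2$ matches exactly the tangential energy on the annulus at leading order, and the residue becomes quadratic in $V_n-\Psi$. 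Combined with the $C^2_{\mathrm{loc}}$ convergence of $Q_n\to Q$ (Proposition \ref{oqconverge}) and Remark \ref{ck_convergence}, together with the rescaled pointwise bound of Lemma \ref{estimates_at_infty_2}, this delivers the target $M_n(2)-M_n(1)\le C R^{-4}$.

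The main obstacle is bringing the Luckhaus shell error down from its naive scale $\sqrt{\mu_n}\sim R^{-2}$ to the target $R^{-4}$: this requires delicately isolating the tangential cancellation for $\Psi$ and controlling the horizontal part $V_n^\sharp-\Psi$ in $C^1(B_2\setminus B_{1/2})$ at the sharp rate $O(R^{-2})$. This cannot come from Lemma \ref{estimates_at_infty_2} alone (which only governs the vertical component $D$), so one must bootstrap the $\log R/R^2$ estimate of Proposition \ref{decay est} through the projected equation \eqref{ProjEqn::Est1+Hx} for $Q^\sharp$, exploiting the faster decay $H(x)=O(|x|^{-4})$ of its inhomogeneous term to iteratively upgrade the pointwise decay of $\pa_r Q^\sharp$.
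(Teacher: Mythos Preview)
Your competitor approach has a genuine gap that you have partly identified but not closed. The Luckhaus layers in your construction contribute errors of order $\mu_n^{1/2}$ and $\mu_n^{1/6}$; even granting the optimistic bound $\mu_n\le CR^{-4}$, these are $O(R^{-2})$ and $O(R^{-2/3})$, two orders short of the target $R^{-4}$. The ``tangential cancellation'' you invoke does not help here, because the shell contributions are genuine positive error terms added on top of whatever the radial extension costs; no cancellation with the $\Psi$ part removes them. Moreover, your appeal to Lemma~\ref{estimates_at_infty_2} for $\mu_n\le CR^{-4}$ is illegitimate: that lemma concerns the limit map $Q$, whereas here $V_n=Q_{\e_n}(R\e_n\,\cdot+r_na_n)$ depends on $n$ and on $R$, and no uniform-in-$n$ analog is available at this point. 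The bootstrap you sketch in the last paragraph would amount to proving a $C^1$ rate $\|V_n^\sharp-\Psi\|_{C^1}\le CR^{-2}$ uniformly in $n$, which is essentially as hard as the lemma itself.

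The paper proceeds by contradiction and compactness instead, and the decisive ingredient you are missing is an external quantitative convergence result, \cite[Proposition~9]{nz}. Assuming the estimate fails along a sequence $(\e_k,R_k)$ with $R_k\to\infty$, one rescales $P_k(x):=Q_{\e_k}(R_k\e_k x)$ and shows, exactly as in Lemma~\ref{strong convergence of Vn} and Theorem~\ref{tangentmap}, that $P_k\to\overline{P}$ strongly in $H^1_{loc}$ with $\overline{P}$ a $0$-homogeneous degree-one minimizing harmonic map. Then \cite[Proposition~9]{nz} yields the sharp rate $\|P_k-\overline{P}\|_{C^2(B_2\setminus B_1)}\le C R_k^{-2}$; since $\pa_r\overline{P}\equiv 0$, this gives $|\pa_r P_k|\le CR_k^{-2}$ pointwise on the annulus, and squaring produces the $R_k^{-4}$ bound, contradicting the blow-up hypothesis. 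In short, the $R^{-4}$ comes from squaring a $C^1$ rate supplied by \cite{nz}, not from an energy-comparison argument.
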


\begin{proof}
Without loss of generality, we can assume $a_n=0$ for every $n$, since we only need the property $|a_n|\ri 0$ as $n\ri\infty$ and the exact location of $a_n$ won't affect our proof.

Assume such constant $C$ does not exist. Then we can find a sequence $\e_k,\; R_k\leq \f{r_k}{2\e_k}$ (in this sequence, one $\e_k$ can appear repeatedly) such that
\beq\label{contra:radial deri}
R_k^4\int_{\{R_k\e_k\leq |x|\leq 2R_k\e_k\}} \f{1}{|x|}\left|\f{\pa Q_{\e_k}}{\pa r}\right|^2\,dx\ri\infty,\quad \text{as }k\ri\infty.
\eeq
In order that \eqref{contra:radial deri} holds, we must have
\beq\label{contra:assumption}
\lim\limits_{k\ri\infty} \e_k=0,\qquad \lim\limits_{k\ri\infty}R_k=\infty.
\eeq
Here we briefly justify these two limits. Firstly if $\limsup\limits_{k\ri\infty}\e_k>0$, then there exists an integer $i_0$ such that $\e_k=\e_{i_0}>0$ holds for infinitely many $k$. For all such $k$, $R_k$ is uniformly bounded since we require $R_k\leq \f{r_k}{2\e_k}=\f{r_{i_0}}{2\e_{i_0}}$. Then $R_k^4\int_{\{R_k\e_k\leq |x|\leq 2R_k\e_k\}} \f{1}{|x|}\left|\f{\pa Q_{\e_k}}{\pa r}\right|^2\,dx$ is also bounded, which contradicts with \eqref{contra:radial deri}. Secondly, if $R_k$ doesn't go to infinity, then we assume $\liminf\limits_{k\ri\infty} R_k=R_0<\infty$. By monotonicity formula \eqref{monotonicityformula} we know $\int_{R_k\e_k\leq |x|\leq 2R_k\e_k}\f{1}{|x|}\left|\f{\pa Q_{\e_k}}{\pa r}\right|^2\,dx$ is uniformly bounded. And this will remain bounded after multiplying bounded $R_k^4$, which also contradicts with \eqref{contra:radial deri}. Therefore $R_k$ has to go to infinity.

Now we define
\beqo
P_k:=Q_{\e_k}(R_k\e_k \cdot x).
\eeqo
Then $P_k$ satisfies the following properties.
\begin{enumerate}
  \item $P_k$ minimizes the functional $\int_{B_3}\{\f12|\na Q|^2+R_k^2f_b(Q)\}\,dx$.
  \item\label{second property} For any $r\in(0,3)$, we have
  \beqo
  \lim\limits_{k\ri\infty} \f{1}{r}\int_{B_r}\f12|\na P_k|^2+R_k^2f_b(R_k)=8s_+^2\pi.
  \eeqo
  The fact that the limit on the left-hand side is bounded from above by $8s_+^2\pi$ comes from \eqref{Un8pi} and the monotonicity formula; the lower bound by $8s_+^2\pi$ follows from the $C^2_{loc}$ convergence of $Q_{\e_n}(\e_nx)$ to $Q(x)$, the asymptotic behavior of $Q(x)$ for large $|x|$, and the monotonicity formula as well.
  \item For any $0< r_1<r_2\leq 3$, we have
  \beq\label{homogeneous property}
  \lim\limits_{k\ri\infty}\int_{\{r_1\leq |x|\leq r_2\}}\f{1}{|x|}\left|\f{\pa P_k}{\pa r}\right|^2\,dx=0
  \eeq
  This property follows from Property \ref{second property} and the monotonicity formula \eqref{monotonicityformula}.
  \item For any $r>0$, it holds that
  \beqo
  \lim\limits_{k\ri\infty} \sup\limits_{x\in B_3\setminus B_r} \dist(P_k(x),\cn)=0.
  \eeqo
  This follows from \eqref{smallness2} and $R_k\ri \infty$.
\end{enumerate}

All these properties enable us to exploit similar arguments as in the proofs of Lemma \ref{strong convergence of Vn} and Theorem \ref{tangentmap} to get the strong $H_{loc}^1$ convergence of $P_k$ to $\overline{P}$, where $\overline{P}\in H^1_{loc}(B_3, \cn)$ is a homogeneous minimizing harmonic map of degree 1. In addition $\overline{P}$ has the form
\beqo
\overline{P}=s_+(m(x)\otimes m(x)-\f13\mathrm{Id}), \quad m=T_m\left(\f{x}{|x|}\right) \text{ for some }T_m\in O(3).
\eeqo

We now apply \cite[Proposition 9]{nz} to get
\beqo
\|P_k-\overline{P}\|_{C^2(B_2\backslash B_1)}\leq \f{C}{R_k^2},\quad \text{for some constant }C.
\eeqo
Consequently, we calculate
\begin{align*}
R_k^4\int_{\{R_k\e_k\leq |x|\leq 2R_k\e_k\}} \f{1}{|x|}\left|\f{\pa Q_{\e_k}}{\pa r}\right|^2\,dx&= R_k^4 \int_{B_2\backslash B_1}\f{1}{|x|}\left|\f{\pa P_k }{\pa r}\right|^2\,dx\\
&=R_k^4 \int_{B_2\backslash B_1}\f{1}{|x|}\left|\f{\pa (P_k-\overline{P}) }{\pa r}\right|^2\,dx\leq C,
\end{align*}
which contradicts with \eqref{contra:radial deri}. The proof is complete.

\end{proof}

\begin{proof}[Proof of Theorem \ref{match in out}]
We prove by contradiction. Assume the conclusion is false, then
\beq\label{diff in out}
\|\Psi-s_+(\f{x}{|x|}\otimes\f{x}{|x|}-\f13\mathrm{Id})\|_{L^2(\BS)}=\s>0.
\eeq
Because $\Psi$ is the tangent map of $Q$, we can find a large enough $r_0$, such that
\begin{align*}
&r_0>\f{2C^{1/4}}{\s^{1/2}}, \quad C \text{ is the constant in \eqref{decay of radial der}}\\
&\|Q(r_0x)-\Psi\|_{L^2(\BS)}<\f{\s}{16}.
\end{align*}
Also, recall that $Q$ is obtained by taking limit of $Q_n$ (see \eqref{def:qnn} and Proposition \ref{oqconverge}), we can find a large integer $N_0$, such that for any $n\geq N_0$,
\begin{align}
\label{inner map} &\| Q_{\e_n}(r_0\e_n x+r_na_n)-\Psi(x) \|_{L^2(\BS)}<\f{\s}{8},\\
\label{outer map}&\|Q_{\e_n}(rx+r_na_n)-s_+\left(\f{x}{|x|}\otimes \f{x}{|x|}-\f13\mathrm{Id}  \right)\|_{L^2(\BS)}<\f{\s}{8}\quad \forall r\in[\f12r_n,\f32r_n],
\end{align}
where for \eqref{outer map} we used \eqref{linfty H1 conv} and the fact that $a_n\ri 0$, see the definition \eqref{def:an} of $a_n$ and the discussion right after \eqref{def:an}. Using \eqref{diff in out}, \eqref{inner map} and \eqref{outer map} we have for $n\geq N_0$, $r\in [\f12 r_n,\f32r_n]$,
\beq\label{in out}
\|Q_{\e_n}(rx+r_na_n)-Q_{\e_n}(r_0\e_nx+r_na_n)\|_{L^2(\BS)}\geq \f{3\delta}{4}.
\eeq
We define $k_n$ to be the largest integer such that $k_n\leq |\log_2 (\f{r_n}{r_0\e_n})|$
Following the same argument as in the proof of Theorem \ref{uniquenessthm}, we have
\begin{align*}
&\|Q_{\e_n}(r_0\e_nx+r_na_n)-Q_{\e_n}(2^{k_n}r_0x+r_na_n)\|_{L^2(\BS)}\\
\leq &\sum_{j=0}^{k_n} \|Q_{\e_n}(2^{j}r_0\e_nx+r_na_n)-Q_{\e_n}(2^{j+1}r_0\e_nx+r_na_n)\|_{L^2(\BS)}\\
\leq & \sum_{j=0}^{k_n-1} \left( \int_{\{2^{j}r_0\e_n\leq |x|\leq 2^{j+1}r_0\e_n\}} \f{1}{|x|}\left|\f{\pa Q_{\e_n}}{\pa r}\right|^2\,dx \right)^{1/2}\\
\leq & \sum_{j=0}^{k_n-1} \f{C^{1/2}}{4^jr_0^2}\leq  \f{2C^{1/2}}{r_0^2}<\f{\s}{2}.
\end{align*}
Here we have used Lemma \ref{decay radial} and $r_0>\f{2C^{1/4}}{\s^{1/2}}$. Note that the result of the calculation above already contradicts \eqref{in out}, which completes our proof of Theorem \ref{match in out}.

\end{proof}

\section{Uniform convergence outside shrinking regions}\label{shrinking domain}

Using all the characterizations of the limit map $Q$, we can further prove the following convergence result.

\begin{theorem}\label{conv on shrinking domain}
Let $\Om$ be an open bounded subset of $\BR^3$ and $Q_\e$ be a minimizer of the energy functional $I_\e[Q]$\eqref{Landau-deGennes} with the boundary condition \eqref{bdy-con}. For any sequence $\e_n\ri 0$, one can find a subsequence, still denoted by $\{\e_n\}$, and a sequence of points $\{x_n\}$, such that
\begin{enumerate}
\setlength\itemsep{0.5em}
\item $Q_{\e_n}\ri Q_*$ in $H^1(\Om)$, where $Q_*$ is a minimizer of \eqref{harmonicmapenergy};
\item $x_n\ri x_0$ as $n\ri\infty$, where $x_0\in \mathrm{Sing}(Q_*)$;
\item Let $B_r(x_0)$ be a small neighborhood of $x_0$ that doesn't contain other singularities of $Q_*$. Then for any sequence of radiuses $R_n$ such that $\lim\limits_{n\ri\infty}R_n=\infty$ and $R_n\e_n<r$, there holds
    \beqo
    \lim\limits_{n\ri\infty}\left( \sup_{R_n\e_n\leq |x| \leq r}|Q_{\e_n}(x+x_n)-Q_*(x+x_0)|\right)=0.
    \eeqo
\end{enumerate}
\end{theorem}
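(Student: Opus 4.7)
I take $x_n := x_0 + r_n a_n$ with $a_n$ from \eqref{def:an}, so that by Proposition \ref{oqconverge} and Theorems \ref{uniquenessthm}, \ref{match in out} the blow-up $Q_{\e_n}(\e_n x + x_n)$ converges in $C^2_{loc}(\BR^3)$ to the local minimizer $Q$, whose unique tangent map at infinity is $\Phi(x) = s_+(\f{x}{|x|}\otimes \f{x}{|x|} - \f13 \mathrm{Id})$. Items (1) and (2) of the statement are then immediate (item (1) is Theorem \ref{convergethm}; item (2) follows from $|a_n|\to 0$ established in Proposition \ref{diam_estimate}), so the content is item (3). Fix $\eta>0$. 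I split the annulus into an outer part $\rho \leq |y| \leq r$ and an inner part $R_n\e_n \leq |y| \leq \rho$, where $\rho \in (0,r)$ is first chosen small enough that $|\Phi(y) - Q_*(y + x_0)| < \eta/4$ whenever $|y|\leq \rho$, which is available from the Jacobi field estimate \eqref{Jacobi}.

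The outer piece is routine. For $n$ large, $r_n(1+|a_n|) < \rho/2$, so both $y+x_n$ and $y+x_0$ stay in the compact set $\Om\setminus B_{\rho/2}(x_0)$, disjoint from $\mathrm{Sing}(Q_*)$; then \eqref{outside rn uniform conv} and the smoothness of $Q_*$ there give $|Q_{\e_n}(y + x_n) - Q_*(y + x_0)| \leq \|Q_{\e_n} - Q_*\|_{C^0(\Om\setminus B_{\rho/2}(x_0))} + \|\na Q_*\|_\infty |x_n - x_0| \to 0$ uniformly. The inner piece is the crux: it suffices to show $|Q_{\e_n}(y + x_n) - \Phi(y)| < 3\eta/4$ uniformly for $R_n\e_n \leq |y| \leq \rho$.

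I would treat the inner annulus sphere-by-sphere, estimating $\|Q_{\e_n}(r\theta + x_n) - \Phi(\theta)\|_{L^\infty(\BS)}$ uniformly in $r \in [R_n\e_n, \rho]$. Using Theorems \ref{uniquenessthm}, \ref{match in out} and Remark \ref{ck_convergence}, first fix $R_0$ so large that $\|Q(R_0\theta) - \Phi(\theta)\|_{L^\infty(\BS)}$ is small; for $n$ large one has $R_n \geq R_0$ and $\rho \leq r_n/2$, and the $C^2_{loc}$ blow-up convergence makes $\|Q_{\e_n}(R_0\e_n\theta + x_n) - \Phi(\theta)\|_{L^\infty(\BS)}$ small as well. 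To propagate this control to every $r \in [R_0\e_n, \rho]$, I apply dyadic telescoping based on the Cauchy--Schwarz-type inequality
\[\int_{\BS}|Q_{\e_n}(r_2\theta+x_n)-Q_{\e_n}(r_1\theta+x_n)|^2\,d\theta \leq \int_{r_1\leq|x|\leq r_2}\f{1}{|x|}|\pa_r Q_{\e_n}(x+x_n)|^2\,dx,\quad r_1\leq r_2 \leq 2r_1,\]
which combined with Lemma \ref{decay radial} yields a geometric sum bounded by $\sum_{j\geq 0} C/(2^jR_0)^2 \leq C'/R_0^2$. Hence $\|Q_{\e_n}(r\theta + x_n) - \Phi(\theta)\|_{L^2(\BS)}$ is uniformly small in $r$ for $n$ large, once $R_0$ is taken large.

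The last step is upgrading this $L^2(\BS)$ bound to $L^\infty(\BS)$ uniformly in $r$. I use the standard interpolation $\|f\|_{L^\infty(\BS)} \leq C\|f\|_{L^2(\BS)}^{1/2}\|\na_\theta f\|_{L^\infty(\BS)}^{1/2}$ together with a uniform Lipschitz estimate $|\na Q_{\e_n}(r\theta + x_n)| \leq C/r$, which follows from the monotonicity formula (Lemma \ref{monotonicitylemma}) combined with the small-energy regularity of Lemma \ref{smallenergy} applied at scale $r/4$, or equivalently from rescaling arguments as in the proof of Lemma \ref{decay radial} (together with the $C^2$ estimates from \cite[Proposition~9]{nz}). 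The main obstacle is bridging the wide range of scales between $R_n\e_n$ (where the blow-up profile controls $Q_{\e_n}$) and $\rho$ (where $C^2$-convergence to $Q_*$ takes over) when $R_n \to \infty$ is arbitrary; the central device is the dyadic telescoping of the radial-derivative decay from Lemma \ref{decay radial}, while the $L^2\to L^\infty$ upgrade (with its loss of a square root) dictates the order in which the small parameters are chosen: first $\rho$, then the interpolation threshold, then $R_0$, then $n$.
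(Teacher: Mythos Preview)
Your route is genuinely different from the paper's. The paper argues by contradiction and compactness: assuming a bad sequence $y_n$ with $|y_n|/\e_n\to\infty$ and $|Q_{\e_n}(y_n+x_n)-Q_*(y_n)|\geq\delta$, it rescales at the offending scale, $Z_n(x):=Q_{\e_n}(|y_n|x+x_n)$, extracts a $C^2(B_{3/2}\setminus B_{1/2})$ limit $\Psi$ (exactly as in the proof of Lemma~\ref{decay radial}), identifies $\Psi=\Phi$ by replaying the dyadic argument of Theorem~\ref{match in out}, and obtains a contradiction. This sidesteps both the splitting into inner/outer annuli and the $L^2(\BS)\to L^\infty(\BS)$ upgrade: the compactness already delivers pointwise control at the bad scale, so no interpolation is needed. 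Your constructive approach is more quantitative (it would, in principle, give a rate), but is heavier in bookkeeping.

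There is, however, a genuine gap in your argument as written. You claim ``for $n$ large one has \ldots\ $\rho\leq r_n/2$'', which is impossible since $r_n\to 0$ while $\rho>0$ is fixed; and Lemma~\ref{decay radial} is only stated for $R\leq r_n/(2\e_n)$, so your dyadic telescoping carries the $L^2(\BS)$ control from $R_0\e_n$ only up to $r_n/2$, not to $\rho$. The intermediate zone $r_n/2\leq |y|\leq \rho$ is simply not covered by your inner-annulus step. This is easy to repair (there \eqref{outside rn uniform conv} and \eqref{linfty H1 conv} give $Q_{\e_n}(y+x_n)\approx Q_*(y+x_n)\approx\Phi(y+x_n)$ via \eqref{Jacobi}, and $|x_n|/|y|\leq |a_n|\to 0$ absorbs the shift), but you must say so. Separately, the uniform bound $|\nabla Q_{\e_n}(y+x_n)|\leq C/|y|$ that you invoke for the interpolation does not follow from Lemma~\ref{smallenergy} alone: the normalized energy on $B_{|y|/4}(y+x_n)$ is of order $8s_+^2\pi$, not below the small threshold $C_1$. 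You really need the rescaled $C^2$ estimate from \cite[Proposition~9]{nz}, obtained through the same compactness as in the proof of Lemma~\ref{decay radial}---at which point the paper's contradiction route is arguably shorter.
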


\begin{proof}
The proof will use compactness arguments similar to those that haven been applied several times before. Without loss of generality, we assume $x_0=0$ and $Q_{*}(x)\sim \Phi(x)= s_+\left(\f{x}{|x|}\otimes \f{x}{|x|}-\f13\mathrm{Id}\right)$ when $x$ approaches $0$. First the existence of $H^1$ convergent subsequence $Q_{\e_k}$ is given in \cite{mz} (see Theorem \ref{convergethm}). Taking up to a subsequence, we can find a sequence of radii $\{r_n\}$ such that $\{r_n,\e_n\}$ satisfy \eqref{def:rn}, \eqref{relation rn en}, \eqref{outside rn uniform conv} and \eqref{linfty H1 conv}. Let $a_n$ be as defined in \eqref{def:an} and we simply take $x_n=r_na_n$. Then $x_n\ri 0$ is guaranteed by the definition and Proposition \ref{diam_estimate}.

Now we are ready to verify the Property (3) in the theorem. We argue by contradiction. Suppose there exists a subsequence of $\{\e_n\}$, still denoted by $\{\e_n\}$, and a sequence of points $\{y_n\}$ such that
\begin{align}
\nonumber &\qquad \qquad \lim\limits_{n\ri\infty}\f{|y_n|}{\e_n}=\infty,\quad |y_n|\leq r\\
\label{contra:yn} &|Q_{\e_n}(y_n+x_n)-Q_*(y_n)|\geq \delta>0,\quad \text{ for some constant }\delta
\end{align}

First it is obvious that \eqref{contra:yn} implies $|y_n|\ri 0$, otherwise it will conflict with the uniform convergence result of $Q_{\e_n}$ to $Q_*$ on any compact set $K$ that doesn't contain any point in $\mathrm{Sing}(Q_*)$. Thus we get
\beq\label{Q* close to hedgehog}
\lim\limits_{n\ri\infty}|Q_*(y_n)-\Phi(y_n)|=0.
\eeq

Next by \eqref{outside rn uniform conv} we only need to consider the case $y_n+x_n\in B_{r_n}$. Now we define
\beqo
Z_n(x):= Q_{\e_n}(|y_n|x+x_n).
\eeqo

Then by exactly the same argument as in the proof of Lemma \ref{decay radial} to derive the convergence of $\{P_k\}$, we can extract a subsequence, still denoted by $\{Z_n\}$, such that
\begin{align*}
&\quad Z_n(x) \ri \Psi(x)\; \text{ in }H^1(B_2)\cap C^2(B_{3/2}\backslash B_{1/2}),\\
& \Psi(x)=s_+(n\otimes n-\f13\mathrm{Id}), \quad n=T\left((\f{x}{|x|}\right)) \text{ for some }T\in O(3).
\end{align*}
Using Lemma \ref{decay radial} and arguing in the same way as in the proof of Theorem \ref{match in out}, one can easily verify that
\beqo
T\left(\f{x}{|x|}\right)=\f{x}{|x|},\text{ i.e. } \Psi(x)=\Phi(x).
\eeqo
Therefore we have
\beq\label{Zn close to hedgehog}
\lim\limits_{n\ri\infty} |Q_{\e_n}(y_n+x_n)-\Phi(y_n)|=0.
\eeq

Combining \eqref{Q* close to hedgehog} and \eqref{Zn close to hedgehog} yields a contradiction with \eqref{contra:yn}, which completes our proof.

\end{proof}

\bibliographystyle{acm}
\bibliography{uniform_bib}

\begin{thebibliography}{10}

\bibitem{abl}
{\sc Alama, S., Bronsard, L., and Lamy, X.}
\newblock Minimizers of the landau--de gennes energy around a spherical colloid
  particle.
\newblock {\em Archive for Rational Mechanics and Analysis 222}, 1 (2016),
  427--450.

\bibitem{acs}
{\sc Alouges, F., Chambolle, A., and Stantejsky, D.}
\newblock The saturn ring effect in nematic liquid crystals with external
  field: effective energy and hysteresis.
\newblock {\em Archive for Rational Mechanics and Analysis 241}, 3 (2021),
  1403--1457.

\bibitem{Ball}
{\sc Ball, J.~M.}
\newblock Mathematics and liquid crystals.
\newblock {\em Molecular Crystals and Liquid Crystals 647}, 1 (2017), 1--27.

\bibitem{bpp}
{\sc Bauman, P., Park, J., and Phillips, D.}
\newblock Analysis of nematic liquid crystals with disclination lines.
\newblock {\em Archive for Rational Mechanics and Analysis 205}, 3 (2012),
  795--826.

\bibitem{bbh93}
{\sc Bethuel, F., Brezis, H., and H{\'e}lein, F.}
\newblock Asymptotics for the minimization of a ginzburg-landau functional.
\newblock {\em Calculus of Variations and Partial Differential Equations 1}, 2
  (1993), 123--148.

\bibitem{bbh94}
{\sc Bethuel, F., Brezis, H., H{\'e}lein, F., et~al.}
\newblock {\em Ginzburg-landau vortices}, vol.~13.
\newblock Springer, 1994.

\bibitem{bcl}
{\sc Brezis, H., Coron, J.-M., and Lieb, E.~H.}
\newblock Harmonic maps with defects.
\newblock {\em Communications in Mathematical Physics 107}, 4 (1986), 649--705.

\bibitem{canevari1}
{\sc Canevari, G.}
\newblock Biaxiality in the asymptotic analysis of a 2d landau- de gennes model
  for liquid crystals.
\newblock {\em ESAIM: Control, Optimisation and Calculus of Variations 21}, 1
  (2015), 101--137.

\bibitem{canevari2}
{\sc Canevari, G.}
\newblock Line defects in the small elastic constant limit of a
  three-dimensional landau-de gennes model.
\newblock {\em Archive for Rational Mechanics and Analysis 223}, 2 (2017),
  591--676.

\bibitem{cl}
{\sc Chen, Y., and Lin, F.-H.}
\newblock Evolution of harmonic maps with dirichlet boundary conditions.
\newblock {\em Communications in Analysis and Geometry 1}, 3 (1993), 327--346.

\bibitem{cs}
{\sc Chen, Y., and Struwe, M.}
\newblock Existence and partial regularity results for the heat flow for
  harmonic maps.
\newblock {\em Mathematische Zeitschrift 201}, 1 (1989), 83--103.

\bibitem{cl2017}
{\sc Contreras, A., and Lamy, X.}
\newblock Biaxial escape in nematics at low temperature.
\newblock {\em Journal of Functional Analysis 272}, 10 (2017), 3987--3997.

\bibitem{clr}
{\sc Contreras, A., Lamy, X., and Rodiac, R.}
\newblock On the convergence of minimizers of singular perturbation
  functionals.
\newblock {\em Indiana University mathematics journal\/} (2018), 1665--1682.

\bibitem{deGennes}
{\sc De~Gennes, P.-G., and Prost, J.}
\newblock {\em The physics of liquid crystals}.
\newblock No.~83. Oxford university press, 1993.

\bibitem{di}
{\sc Di~Fratta, G., Robbins, J.~M., Slastikov, V., and Zarnescu, A.}
\newblock Half-integer point defects in the q-tensor theory of nematic liquid
  crystals.
\newblock {\em Journal of Nonlinear Science 26}, 1 (2016), 121--140.

\bibitem{dmp2}
{\sc Dipasquale, F., Millot, V., and Pisante, A.}
\newblock Torus-like solutions for the landau-de gennes model. part ii:
  Topology of $\mathbb{S}^1$-equivariant minimizers.
\newblock {\em arXiv preprint arXiv:2008.13676\/} (2020).

\bibitem{dmp1}
{\sc Dipasquale, F., Millot, V., and Pisante, A.}
\newblock Torus-like solutions for the landau-de gennes model. part i: the
  lyuksyutov regime.
\newblock {\em Archive for Rational Mechanics and Analysis 239}, 2 (2021),
  599--678.

\bibitem{frank}
{\sc Frank, F.~C.}
\newblock I. liquid crystals. on the theory of liquid crystals.
\newblock {\em Discussions of the Faraday Society 25\/} (1958), 19--28.

\bibitem{gartland}
{\sc Gartland, E.~C., et~al.}
\newblock Scalings and limits of landau-de gennes models for liquid crystals: a
  comment on some recent analytical papers.
\newblock {\em Mathematical Modelling and Analysis 23}, 3 (2018), 414.

\bibitem{gm1999}
{\sc Gartland~Jr, E., and Mkaddem, S.}
\newblock Instability of radial hedgehog configurations in nematic liquid
  crystals under landau--de gennes free-energy models.
\newblock {\em Physical Review E 59}, 1 (1999), 563.

\bibitem{gt}
{\sc Geng, Z., and Tong, J.}
\newblock Regularity of minimizers of a tensor-valued variational obstacle
  problem in three dimensions.
\newblock {\em Calculus of Variations and Partial Differential Equations 59}, 2
  (2020), 1--35.

\bibitem{gm}
{\sc Golovaty, D., and Montero, J.~A.}
\newblock On minimizers of a landau--de gennes energy functional on planar
  domains.
\newblock {\em Archive for Rational Mechanics and Analysis 213}, 2 (2014),
  447--490.

\bibitem{gms}
{\sc Golovaty, D., Montero, J.~A., and Sternberg, P.}
\newblock Dimension reduction for the landau-de gennes model in planar nematic
  thin films.
\newblock {\em Journal of Nonlinear Science 25}, 6 (2015), 1431--1451.

\bibitem{hkl}
{\sc Hardt, R., Kinderlehrer, D., and Lin, F.-H.}
\newblock Existence and partial regularity of static liquid crystal
  configurations.
\newblock {\em Communications in mathematical physics 105}, 4 (1986), 547--570.

\bibitem{hmp}
{\sc Henao, D., Majumdar, A., and Pisante, A.}
\newblock Uniaxial versus biaxial character of nematic equilibria in three
  dimensions.
\newblock {\em Calculus of Variations and Partial Differential Equations 56}, 2
  (2017), 1--22.

\bibitem{hqz}
{\sc Hu, Y., Qu, Y., and Zhang, P.}
\newblock On the disclination lines of nematic liquid crystals.
\newblock {\em Communications in Computational Physics 19}, 2 (2016), 354--379.

\bibitem{insz1}
{\sc Ignat, R., Nguyen, L., Slastikov, V., and Zarnescu, A.}
\newblock Uniqueness results for an ode related to a generalized
  ginzburg--landau model for liquid crystals.
\newblock {\em SIAM Journal on Mathematical Analysis 46}, 5 (2014), 3390--3425.

\bibitem{insz2}
{\sc Ignat, R., Nguyen, L., Slastikov, V., and Zarnescu, A.}
\newblock Stability of the melting hedgehog in the landau--de gennes theory of
  nematic liquid crystals.
\newblock {\em Archive for Rational Mechanics and Analysis 215}, 2 (2015),
  633--673.

\bibitem{insz3}
{\sc Ignat, R., Nguyen, L., Slastikov, V., and Zarnescu, A.}
\newblock Instability of point defects in a two-dimensional nematic liquid
  crystal model.
\newblock {\em Annales de l'Institut Henri Poincar{\'e} C, Analyse non
  lin{\'e}aire 33}, 4 (2016), 1131--1152.

\bibitem{insz4}
{\sc Ignat, R., Nguyen, L., Slastikov, V., and Zarnescu, A.}
\newblock Stability of point defects of degree $\pm\frac12$ in a
  two-dimensional nematic liquid crystal model.
\newblock {\em Calculus of Variations and Partial Differential Equations 55}, 5
  (2016), 1--33.

\bibitem{virga}
{\sc Kralj, S., and Virga, E.~G.}
\newblock Universal fine structure of nematic hedgehogs.
\newblock {\em Journal of Physics A: Mathematical and General 34}, 4 (2001),
  829.

\bibitem{lamy2013}
{\sc Lamy, X.}
\newblock Some properties of the nematic radial hedgehog in the landau--de
  gennes theory.
\newblock {\em Journal of Mathematical Analysis and Applications 397}, 2
  (2013), 586--594.

\bibitem{Lin-Liu}
{\sc Lin, F.-H., and Liu, C.}
\newblock Static and dynamic theories of liquid crystals.
\newblock {\em J. Partial Differential Equations 14}, 4 (2001), 289--330.

\bibitem{lr}
{\sc Lin, F.-H., and Riviere, T.}
\newblock Energy quantization for harmonic maps.
\newblock {\em Duke Mathematical Journal 111}, 1 (2002), 177--193.

\bibitem{lw1}
{\sc Lin, F.-H., and Wang, C.-Y.}
\newblock Harmonic and quasi-harmonic spheres.
\newblock {\em Communications in Analysis and Geometry 7}, 2 (1999), 397--429.

\bibitem{lw2}
{\sc Lin, F.-H., and Wang, C.-Y.}
\newblock Harmonic and quasi-harmonic spheres, part ii.
\newblock {\em Communications in Analysis and Geometry 10}, 2 (2002), 341--375.

\bibitem{lw3}
{\sc Lin, F.-H., and Wang, C.-Y.}
\newblock {\em The analysis of harmonic maps and their heat flows}.
\newblock World Scientific, 2008.

\bibitem{luckhaus}
{\sc Luckhaus, S.}
\newblock Partial h{\"o}lder continuity for minima of certain energies among
  maps into a riemannian manifold.
\newblock {\em Indiana University mathematics journal 37}, 2 (1988), 349--367.

\bibitem{ma2012}
{\sc Majumdar, A.}
\newblock The radial-hedgehog solution in landau--de gennes' theory for nematic
  liquid crystals.
\newblock {\em European Journal of Applied Mathematics 23}, 1 (2012), 61--97.

\bibitem{mz}
{\sc Majumdar, A., and Zarnescu, A.}
\newblock Landau--de gennes theory of nematic liquid crystals: the oseen--frank
  limit and beyond.
\newblock {\em Archive for rational mechanics and analysis 196}, 1 (2010),
  227--280.

\bibitem{mp}
{\sc Millot, V., and Pisante, A.}
\newblock Symmetry of local minimizers for the three-dimensional
  ginzburg--landau functional.
\newblock {\em Journal of the European Mathematical Society 12}, 5 (2010),
  1069--1096.

\bibitem{mg2000}
{\sc Mkaddem, S., and Gartland~Jr, E.}
\newblock Fine structure of defects in radial nematic droplets.
\newblock {\em Physical Review E 62}, 5 (2000), 6694.

\bibitem{nz}
{\sc Nguyen, L., and Zarnescu, A.}
\newblock Refined approximation for minimizers of a landau-de gennes energy
  functional.
\newblock {\em Calculus of Variations and Partial Differential Equations 47}, 1
  (2013), 383--432.

\bibitem{n}
{\sc Nomizu, K.}
\newblock Characteristic roots and vectors of a diifferentiable family of
  symmetric matrices.
\newblock {\em Linear and Multilinear Algebra 1}, 2 (1973), 159--162.

\bibitem{su}
{\sc Schoen, R., and Uhlenbeck, K.}
\newblock A regularity theory for harmonic maps.
\newblock {\em Journal of Differential Geometry 17}, 2 (1982), 307--335.

\bibitem{ss}
{\sc Schopohl, N., and Sluckin, T.}
\newblock Hedgehog structure in nematic and magnetic systems.
\newblock {\em Journal de Physique 49}, 7 (1988), 1097--1101.

\bibitem{simon0}
{\sc Simon, L.}
\newblock Asymptotics for a class of non-linear evolution equations, with
  applications to geometric problems.
\newblock {\em Annals of Mathematics\/} (1983), 525--571.

\bibitem{simon}
{\sc Simon, L.}
\newblock Isolated singularities of extrema of geometric variational problems.
\newblock In {\em Harmonic mappings and minimal immersions}. Springer, 1985,
  pp.~206--277.

\bibitem{ty}
{\sc Tai, H.-M., and Yu, Y.}
\newblock Pattern formation in landau-de gennes theory.
\newblock {\em arXiv preprint arXiv:2107.01440\/} (2021).

\bibitem{yu2020}
{\sc Yu, Y.}
\newblock Disclinations in limiting landau--de gennes theory.
\newblock {\em Archive for Rational Mechanics and Analysis 237}, 1 (2020),
  147--200.

\bibitem{z2021}
{\sc Zarnescu, A.}
\newblock Mathematical problems of nematic liquid crystals: between dynamical
  and stationary problems.
\newblock {\em Philosophical Transactions of the Royal Society A 379}, 2201
  (2021), 20200432.

\end{thebibliography}

\end{document}